

\documentclass[12pt]{amsart}

\usepackage{hyperref}
\usepackage{color}

\usepackage{graphicx}
\usepackage{xypic}

\usepackage{amsmath,amsfonts,amssymb}
\usepackage{colortbl}


\usepackage{pstricks}
\usepackage{pst-plot}

\theoremstyle{plain}
\newtheorem*{theorem*}{Theorem}
\newtheorem*{lemma*} {Lemma}
\newtheorem*{corollary*} {Corollary}
\newtheorem*{proposition*} {Proposition}
\newtheorem*{conjecture*}{Conjecture}

\newtheorem{theorem}{Theorem}[section]
\newtheorem{lemma}[theorem]{Lemma}

\newtheorem{proposition}[theorem]{Proposition}
\newtheorem{conjecture}[theorem]{Conjecture}

\theoremstyle{definition}

\theoremstyle{remark}
\newtheorem*{remark}{Remark}
\newtheorem*{remarks}{Remarks}

\newtheorem*{claim}{Claim}

\theoremstyle{definition}
\newtheorem{defn}[theorem]{Definition}

\newtheorem{question}{Question}

\textwidth 6in    
\oddsidemargin.25in    
\evensidemargin.25in     
\marginparwidth=.85in

\newrgbcolor{deepgreen}{0.1 0.5 0.1} 

\def\Q{\Bbb{Q}}
\def\F{\Bbb{F}}
\def\Z{\Bbb{Z}}
\def\R{\Bbb{R}}
\def\C{\Bbb{C}}
\def\N{\Bbb{N}}
\def\LL{\mathcal{L}}
\def\be{\begin{equation}} \def\ee{\end{equation}}
\def\id{\operatorname{id}}

\def\eps{\epsilon}
\def\sign{\operatorname{sign}}
\def\null{\operatorname{null}}

\def\l{\lambda}

\def\MM{\mathcal{M}}
\def\ZZ{\mathcal{Z}}

\def\ext{\operatorname{Ext}}
\def\sm{\setminus}

\def\bp{\begin{pmatrix}}
\def\ep{\end{pmatrix}}
\def\bn{\begin{enumerate}}
\def\en{\end{enumerate}}

\def\ba{\begin{array}}
\def\ea{\end{array}}

\def\L{\Lambda}

\def\tpm{[t^{\pm 1}]}

\def\s{\sigma}

\def\ev{\operatorname{ev}}
\def\PD{\operatorname{PD}}

\def\lk{\operatorname{lk}}
\def\fr12{\frac{1}{2}}

\def\ol{\overline}

\def\S{\Sigma}
\def\BB{\mathcal{B}}

\def\CC{\mathcal{C}}

\def\PP{\mathcal{P}}

\def\realt{\R[t^{\pm 1}]}

\def\zt{\Z[t^{\pm 1}]}

\def\hom{\operatorname{Hom}}

\def\O{\Omega}


\def\vt{V^t}

\setcounter{MaxMatrixCols}{12}
\numberwithin{equation}{section}

\def\to{\mathchoice{\longrightarrow}{\rightarrow}{\rightarrow}{\rightarrow}}
\makeatletter
\newcommand{\shortxra}[2][]{\ext@arrow 0359\rightarrowfill@{#1}{#2}}
\def\longrightarrowfill@{\arrowfill@\relbar\relbar\longrightarrow}
\newcommand{\longxra}[2][]{\ext@arrow 0359\longrightarrowfill@{#1}{#2}}
\renewcommand{\xrightarrow}[2][]{\mathchoice{\longxra[#1]{#2}}%
  {\shortxra[#1]{#2}}{\shortxra[#1]{#2}}{\shortxra[#1]{#2}}}
\makeatother

\begin{document}

\title{The unknotting number and classical invariants I}

\date{\today}

\author{Maciej Borodzik}
\address{Institute of Mathematics, University of Warsaw, Warsaw, Poland}
\email{mcboro@mimuw.edu.pl}

\author{Stefan Friedl}
\address{Mathematisches Institut\\ Universit\"at zu K\"oln\\   Germany}
\email{sfriedl@gmail.com}

\thanks{The first author is supported by Polish MNiSzW Grant No N N201 397937 and also by the Foundation for Polish Science FNP}

\def\subjclassname{\textup{2010} Mathematics Subject Classification}
\expandafter\let\csname subjclassname@1991\endcsname=\subjclassname
\expandafter\let\csname subjclassname@2000\endcsname=\subjclassname

\subjclass{Primary 57M27}
\keywords{}

\begin{abstract}
Given a knot $K$ we introduce a new invariant coming from the Blanchfield pairing and we
show that it gives a  lower bound on the unknotting number of $K$.
This lower bound subsumes the  lower bounds given by the Levine-Tristram signatures, by the Nakanishi index
and it also subsumes the Lickorish obstruction to the unknotting number being equal to one.
Our approach in particular allows us to show for 25 knots with up to 12 crossings that their unknotting number is at least three,
most of which are very difficult to treat otherwise.
\end{abstract}

\maketitle

\section{Introduction}

Let $K\subset S^3$ be a knot. Throughout this paper a knot is always assumed to be oriented.
A \emph{crossing change} is one of the two local moves on a knot diagram given in Figure \ref{fig:cc}.
\begin{figure}[h]
\begin{center}
\begin{pspicture}(-7,-0.5)(6,2.2)
\psline(-4.5,0)(-3.75,0.75)\psline[arrowsize=6pt]{->}(-3.25,1.25)(-2.5,2)
\psline[arrowsize=6pt]{->}(-4.5,2)(-2.5,0)
\rput(-3.5,0){\psscalebox{1.25}{``$+$''}}
\psline(3,2)(3.75,1.25)\psline[arrowsize=6pt]{->}(4.25,0.75)(5,0)
\psline[arrowsize=6pt]{->}(3,0)(5,2)
\rput(4,0){\psscalebox{1.25}{``$-$''}}
\psline[linestyle=dashed,arrowsize=3pt]{->}(-2,1.1)(2,1.1)\rput(0,1.35){\psscalebox{0.8}{negative crossing change}}
\psline[linestyle=dashed,arrowsize=3pt]{<-}(-2,0.9)(2,0.9)\rput(0,0.65){\psscalebox{0.8}{positive crossing change}}
\end{pspicture}
\end{center}
\label{fig:cc}
\caption{Negative and positive crossing change.}
\end{figure}
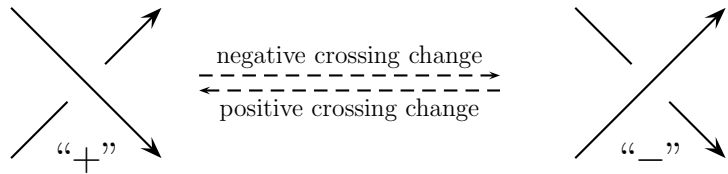
The \emph{unknotting number $u(K)$} of a knot $K$ is defined to be the minimal number of crossing changes
necessary to turn $K$ into the unknot.
The unknotting number  is one of the most elementary invariants of a knot, but also one of the most intractable.
Whereas upper bounds can be found readily using diagrams, it is much harder to find non--trivial lower bounds.

In the paper we will for the most part study a closely related invariant, namely the \emph{algebraic unknotting number $u_a(K)$},
which is defined to be the minimal number of crossing changes necessary to turn $K$ into a knot with trivial Alexander polynomial.
By \cite{Fo93} and \cite{Sae99} this is equivalent to the original definition by Murakami \cite{Muk90}
given in terms of `algebraic unknotting moves' on Seifert matrices.
It is clear that $u(K) \geq u_a(K)$, and that in general this is not an equality. For example for any non--trivial knot $K$ with trivial Alexander polynomial
we have $u(K)\geq 1$ and $u_a(K)=0$.

\subsection{Review of classical invariants}

Let us first fix some terminology. Let $F$ be
a Seifert surface for $K$ and let $v_1,\dots,v_n$ be
a collection of embedded simple closed curves  on
$F$ which represent a basis for $H_1(F;\Z)$. The corresponding Seifert matrix $V$ is defined as the matrix with $(i,j)$-entry given by
\[ \lk(v_i,v_j^+),\]
where   we denote by $v_j^+$ the positive push-off of $v_j$.
The S--equivalence class of the Seifert matrix is well-known to be an invariant of $K$
(see e.g.  \cite[Theorem~8.4]{Lic97} for details). The S--equivalence class of the Seifert matrix will be denoted by $V=V_K$.
By abuse of notation we will often denote by $V=V_K$ a representative of the $S$--equivalence class.
In this paper, by a \emph{classical invariant} of a knot we mean an invariant which is determined by $V_K$.

Given
a knot $K$ we denote by  $X(K)=S^3\sm \nu K$ the exterior of $K$ and we denote by $\Sigma(K)$ its branched cover.
We now give several well--known examples of classical invariants which will play a r\^ole
in the paper (in the following the matrix $V$ is a $2n\times 2n$ Seifert matrix for $K$):
\bn
\item The Alexander polynomial is defined as
 \[ \Delta_K(t)=t^{-n}\cdot \det(Vt-\vt)\in \zt. \]
 Note that $\Delta_K(t)$ is well-defined with no indeterminacy, and $\Delta_K(1)=1$.
\item The knot determinant $\det(K)=(-1)^n\det(V+V^t)$, which in this paper is viewed as a signed invariant.
\item  The isometry type of the linking pairing
\[ l(K)\colon H_1(\S(K);\Z)\times H_1(\S(K);\Z)\to \Q/\Z, \]
which is
isometric to the pairing
\[ \ba{rcl} \Z^{2n}/(V+V^t)\Z^n \times \Z^{2n}/(V+V^t)\Z^{2n}&\to& \Q/\Z \\
 (v,w)&\mapsto & v^t(V+V^t)^{-1}w.\ea \]
 We refer to \cite{Go78} for details.
 \item The Blanchfield pairing
\[ \lambda(K)\colon H_1(X(K);\zt)\times H_1(X(K);\zt)\to \Q(t)/\zt\]
which is a hermitian non--singular pairing on the Alexander module $H_1(X(K);\zt)$.
We refer to  Section \ref{section:seifertbf} for the definition.
 \item The \emph{Nakanishi index $m(K)$}, i.e. the
  minimal number of generators of the Alexander module $H_1(X(K);\zt)$.
\item Given $z\in S^1$ the \emph{Levine-Tristram signature} is defined as
\[  \s_z(K)=\sign(V(1-z)+\vt(1-z^{-1})).\]
(Note that $\s_{-1}(K)$ is just the ordinary knot signature $\s(K)$.)
\item Given $z\in \C\sm\{0,1\}$ the \emph{nullity} is defined as
\[ \eta_z(K)=\null(V(1-z)+\vt(1-z^{-1})),\]
furthermore $\eta_1(K)$ is by convention defined to be $0$.
\en

\subsection{Classical bounds for the unknotting number}
We will now quickly summarize all previous classical lower bounds on the unknotting number which are known to the authors.

The first lower bounds on the unknotting number go back to Wendt \cite{We37}, they are subsumed by the following inequality due to Nakanishi \cite{Na81}:
 \[  u(K) \geq  m(K). \]
 We discuss it in Section~\ref{section:nakanishi}

It has been known since the work of Murasugi \cite{Mus65} that  Levine--Tristram signatures give rise to lower bounds on the unknotting number.
In particular the following inequality holds:
 \[ u_a(K) \geq \mu(K):=\frac{1}{2}\left(\max\{ \eta_z(K)+\s_z(K) \, |\, z\in S^1\}+\max\{ \eta_z(K)-\s_z(K) \, |\, z\in S^1\}\right).\]
 This inequality is in all likelihood known to the experts, but we are not aware of a reference and we thus give a proof (together with a more refined statement) in Section \ref{section:lteta}.

By Saeki \cite[Proposition~4.1]{Sae99} the topological 4--ball genus $g_4^{top}(K)$ is a lower bound on the algebraic unknotting number $u_a(K)$.
Livingston \cite{Liv11} introduced a classical  invariant $\rho(K)$ which gives a lower bound on $g_4^{top}(K)$. In Section  \ref{section:livingston} we will slightly modify
Livingston's invariant to define a new classical invariant $\rho_{\zt}(K)$ which satisfies
\[
 g_4^{top}(K)\geq \rho_{\zt}(K)\geq \rho(K)\]
 and we will show that $n(K)\geq \rho_{\zt}(K)$.

We now recall several classical obstructions to a knot $K$ having `small' algebraic unknotting number.
 If $K$ can be unknotted using a single $\eps$--crossing change (with $\eps \in \{-1,1\})$), then by the work of Lickorish \cite{Lic85}  there exists a generator $h$ of $ H_1(\Sigma(K);\Z)$ such that
\[ l(h,h)=\frac{-2\eps }{\det(K)}\in \Q/\Z.\]
Recently Jabuka \cite{Ja09} also introduced an obstruction to the unknotting number being one, in Section \ref{section:jabuka} we will see that it is subsumed by the Lickorish obstruction.
Also note that  the Lickorish obstruction was generalized  by Fogel,  Murakami and Rickard (see \cite{Fo93,Muk90,Lic11}) in terms of the Blanchfield pairing.
Finally note that if $|\s(K)|=4$, then Stoimenow \cite[Proposition~5.2]{St04} gives a classical obstruction to $u_a(K)=2$ in terms of the determinant of $K$.
To the best of our knowledge the above is a complete list of lower bounds on the unknotting number given by classical invariants.

\begin{remarks}
\bn
\item
Lower
bounds on the unknotting number have also been obtained using
 gauge theory \cite{CoL86,KMr93}, Khovanov homology \cite{Ras10}
and Heegaard-Floer homology \cite{Ras03,OS03b,OS05,Ow08,Gr09,Sar10} and various other methods \cite{KM86,Kob89,ST89,Mi98,Tra99,St04,MQ06,GL06}.
Note though, that with the exception of the Rasmussen $s$--invariant,
the Ozsv\'ath--Szabo $\tau$--invariant and the Owens obstruction
most of the above are in fact obstructions to the unknotting number being equal to
one or two.
\item Without doubt, the most important result on unknotting numbers has been the resolution of the Milnor conjecture by Kronheimer and Mrowka \cite{KMr93}: the unknotting number of the $(p,q)$--torus knot equals $\frac{(p-1)(q-1)}{2}$. We also refer to \cite{OS03b,Ras10,Sar10} for alternative proofs.
Finally we refer to \cite{BW84} for an interesting pre--gauge theory discussion of the problem.
\en
\end{remarks}

\subsection{Definition of the invariant $n(K)$}
Given a hermitian $n\times n$-matrix $A$ over $\zt$ with $\det(A)\ne 0$  we denote by
$\lambda(A)$ the pairing
\[ \ba{rcl} \l(A)\colon \zt^n/A\zt^n \times \zt^n/A\zt^n&\to & \Q(t)/\zt \\
(a,b) &\mapsto & \ol{a}^t A^{-1} b,\ea \]
where we view $a,b$ as represented by column vectors in $\zt^n$. Note that $\lambda(A)$ is a non-singular, hermitian
pairing.

Let $K$ be a knot.
We  define $n(K)$ to be the minimal size of a hermitian matrix $A$ over $\zt$ such that
\begin{itemize}
\item $\lambda(A)\cong \lambda(K)$, i.e. $\lambda(A)$ is isometric to the Blanchfield pairing of $K$, and
\item the matrix $A(1)$ is congruent over $\Z$ to a diagonal matrix which has $\pm 1$'s on the diagonal.
\end{itemize}
In Section \ref{section:seifertbf} we will see that the Blanchfield pairing of  $K$ can indeed be represented
by such a matrix $A$, i.e. we will show that $n(K)$ is actually defined.
We will furthermore show that $n(K)\leq \deg \Delta_K(t)+1$.
Note that $n(K)=0$ if and only if the Alexander polynomial of $K$ is trivial.
Finally note that $n(K)$ is a classical invariant since the Blanchfield pairing is a classical invariant.

\subsection{The main theorem}

Our main theorem is the following.

\begin{theorem}\label{thm1}
Let $K$ be a knot which can be turned into an Alexander polynomial one
knot using $u_+$ positive crossing changes and $u_-$ negative crossing changes.
Then there exists a hermitian matrix $A(t)$ of size $u_++u_-$ over $\zt$ with the following two properties:
\bn
\item $\l(A(t))\cong \l(K)$,
\item $A(1)$ is a diagonal matrix such that $u_+$ diagonal entries are equal to $-1$ and $u_-$ diagonal entries are equal to $1$.
\en
In particular $u_a(K)\geq n(K)$.
\end{theorem}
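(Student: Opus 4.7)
The plan is to realize the sequence of $u_+$ positive and $u_-$ negative crossing changes as a four-dimensional cobordism and then to read off the matrix $A(t)$ from the equivariant intersection form on the infinite cyclic cover of this cobordism.

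\smallskip

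\emph{Step 1: The cobordism.} Let $K'$ be the Alexander polynomial one knot obtained from $K$ by the $u=u_++u_-$ crossing changes. Each crossing change is realized by $\pm 1$-surgery on a small unknot $c_i$ encircling the crossing; the sign depends on whether the crossing change is positive or negative. Stacking these operations yields a compact oriented four-manifold $W$ with $\partial W = -X(K)\sqcup X(K')$, built from $X(K)\times[0,1]$ by attaching $u_++u_-$ two-handles along the $c_i$'s with framings $\pm 1$. In particular $H_2(W;\Z)\cong\Z^{u}$ and the ordinary intersection form on $H_2(W;\Z)$ is diagonal, with $u_+$ entries equal to $-1$ and $u_-$ entries equal to $+1$ (up to the standard sign convention I will need to check carefully).

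\smallskip

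\emph{Step 2: Lifting to the infinite cyclic cover.} Each attaching circle $c_i$ is null-homologous in the knot exterior, since it bounds a small disc that the two strands of the crossing pierce with opposite signs. Hence the map $H_1(X(K);\Z)\to H_1(W;\Z)$ remains an isomorphism onto $\Z$, and the canonical epimorphism $\pi_1(W)\to\Z$ gives a well-defined infinite cyclic cover $\wti W$ restricting to the usual covers of $X(K)$ and $X(K')$ on the boundary. Set $A(t):=$ the equivariant intersection form on $H_2(\wti W;\Z)$, viewed as a hermitian form over $\zt$. A basis is provided by the lifts of the cores of the two-handles capped off by lifts of Seifert surfaces for the $c_i$'s in $\wti{X(K)}$; this gives a hermitian $u\times u$ matrix over $\zt$ whose evaluation at $t=1$ is precisely the ordinary intersection form, yielding property (ii).

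\smallskip

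\emph{Step 3: Recovering the Blanchfield pairing.} Here is the main step. Because $K'$ has trivial Alexander polynomial, $H_1(X(K');\zt)=0$ and $H_2(X(K');\zt)=0$. Running the long exact sequence of the pair $(\wti W,\wti{X(K)}\sqcup\wti{X(K')})$ together with Poincar\'e–Lefschetz duality over $\zt$, one obtains that $A(t)$ is a presentation matrix of the Alexander module $H_1(X(K);\zt)$, and moreover that the associated linking form $\l(A(t))$ as defined in the paper is isometric to the Blanchfield pairing $\l(K)$. This is a standard cobordism-theoretic construction of the Blanchfield pairing, and the key point is non-singularity of $A(t)$, which follows from the vanishing of the Alexander module of $K'$ so that the intersection form on $\wti W$ is non-degenerate on the quotient by the boundary.

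\smallskip

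\emph{Main obstacle.} The hardest part is Step~3: verifying carefully, from the long exact sequence of the cobordism, that the equivariant intersection form on $H_2(\wti W;\Z)$ genuinely represents $\l(K)$ rather than some related pairing, and that it is non-singular over $\zt$. The sign bookkeeping in Step~1 (matching $\pm 1$ surgery coefficients with the sign of the crossing change and with the resulting intersection form on $W$) is a subsidiary technicality. Once Step~3 is complete, the matrix $A(t)$ satisfies both properties (i) and (ii), and the inequality $u_a(K)\geq n(K)$ follows immediately from the definition of $n(K)$ by taking $u_+,u_-$ realizing $u_a(K)$.
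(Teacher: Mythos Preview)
Your overall strategy---realizing the crossing changes as a 2-handle cobordism and reading off the Blanchfield pairing from the equivariant intersection form---is correct and is the paper's strategy as well. The paper, however, executes it differently: rather than keep a two-ended cobordism, it invokes Freedman's theorem to cap off the Alexander-polynomial-one end with a topological slice-disk complement $X$ satisfying $\pi_1(X)=\Z$, producing a single-boundary $W$ with $\partial W=M(K)$ and $\pi_1(W)=\Z$. This heavier input buys the immediate vanishing $H_1(W;\Lambda)=0$ and a clean UCSS argument that $H_2(W;\Lambda)$ is free of rank $b_2(W)$; the identification with $\lambda(K)$ then falls out of the short exact sequence $0\to H_2(W;\Lambda)\to H_2(W,M;\Lambda)\to H_1(M;\Lambda)\to 0$. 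The paper itself remarks that Freedman is not strictly necessary, so your more elementary route is viable.

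There is, however, a genuine error in your Step~2. You propose to obtain a basis of $H_2(W;\Lambda)$ by capping off lifts of the 2-handle cores with ``lifts of Seifert surfaces for the $c_i$'s in $\wti{X(K)}$''. This does not work: the $c_i$ are null-homologous in $X(K)$, but their lifts typically represent \emph{non-zero} classes in the Alexander module of $K$---indeed, since surgery on them kills that module, they must generate it---so they do not bound in $\wti{X(K)}$ and no such capping surfaces exist. The fix is to cap off on the \emph{other} end, inside $\wti{X(K')}$, where $H_1(X(K');\Lambda)=0$: viewing $W$ from the $K'$ side as $X(K')\times I$ with dual 2-handles attached gives $0=H_2(X(K');\Lambda)\to H_2(W;\Lambda)\to\Lambda^u\to H_1(X(K');\Lambda)=0$, so $H_2(W;\Lambda)\cong\Lambda^u$ on the nose with basis the capped-off cocores. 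This is precisely the vanishing you invoke in Step~3, so the key insight is already there, just misplaced. (A minor related point: $\partial W$ is not the disjoint union $-X(K)\sqcup X(K')$, since the exteriors have torus boundary; either pass to the 0-surgeries $M(K),M(K')$ and deal with the extra $\Z\cong H_2(M(K);\Lambda)$, or observe that the actual boundary is the closed splice $X(K)\cup_{T^2}X(K')$ and check its $H_1(-;\Lambda)$ is still the Alexander module of $K$.) With these corrections your Step~3 goes through along the lines you sketch.
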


In Section~\ref{section:classicallower} we will show that the lower bound on the algebraic unknotting number from Theorem~\ref{thm1} contains, to the best knowledge of the authors, all previous classical lower bounds to the unknotting number.
This result can be summarized in the following theorem:

\begin{theorem}\label{thm:subsume}
The invariant $n(K)$ subsumes the following  unknotting obstructions:
\bn
\item the Nakanishi index (see Section~\ref{section:nakanishi}),
\item the invariant $\mu(K)$ (see Section~\ref{section:lteta}),
\item $\rho_{\zt}(K)$ and in particular Livingston's invariant $\rho(K)$ (see Section~\ref{section:livingston}),
\item the Fogel--Murakami--Rickard obstruction (see Section~\ref{section:fogelmurakami}),
\item the Lickorish obstruction and the Jabuka obstruction (see Section~\ref{section:lickorish} and Section~\ref{section:jabuka}),
\item the Stoimenow obstruction (see Section~\ref{section:stoimenow}).
\en
In particular all of the above give lower bounds on the \emph{algebraic} unknotting number.
\end{theorem}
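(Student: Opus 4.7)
The plan is to handle each of the six obstructions separately in its own subsection of Section~\ref{section:classicallower}. The common setup is: pick a hermitian matrix $A(t)$ over $\zt$ of size $n(K)$ which represents the Blanchfield pairing $\l(K)$ and whose specialization $A(1)$ is congruent over $\Z$ to a diagonal matrix with $\pm 1$ entries. Each classical obstruction is then extracted from $A(t)$, and the size $n(K)$ of $A(t)$ furnishes the desired upper bound on that obstruction.

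For the first three (presentation-type) obstructions the argument is essentially immediate. The Alexander module is $H_1(X(K);\zt)\cong\zt^{n(K)}/A(t)\zt^{n(K)}$, hence generated by $n(K)$ elements, giving $m(K)\leq n(K)$. For $\mu(K)$, I would invoke the standard fact that for each $z\in S^1$ the signature and nullity of the complex hermitian matrix $A(z)$ recover $\s_z(K)$ and $\eta_z(K)$, so that $\eta_z(K)+|\s_z(K)|\leq n(K)$ for every $z$; taking the two maxima in the definition of $\mu$ yields $\mu(K)\leq n(K)$. For $\rho_{\zt}(K)$ the bound will be tautological: $\rho_{\zt}(K)$ will be defined in Section~\ref{section:livingston} as the minimum size of a matrix satisfying properties strictly weaker than those defining $n(K)$, so $\rho_{\zt}(K)\leq n(K)$, and the inequality $\rho_{\zt}(K)\geq\rho(K)$ will be established in that same section.

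For the small-move obstructions I would specialize $A(t)$ to $t=-1$ to recover a presentation by $A(-1)$ of the linking form on $\S(K)$. The Fogel--Murakami--Rickard condition, being a Blanchfield reformulation of the fact that a single crossing change trades a $\pm 1$ block for a compatible $1\times 1$ hermitian matrix, is exactly the case $n(K)\leq 1$ of Theorem~\ref{thm1}. The Lickorish obstruction then follows by reading off a generator $h$ of the cyclic group $H_1(\S(K);\Z)$ from the $1\times 1$ matrix $A(-1)$ and computing $l(h,h)=-2\eps/\det(K)$ directly; Jabuka's obstruction is automatic since it will be shown in Section~\ref{section:jabuka} to be subsumed by Lickorish's. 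For the Stoimenow obstruction I would exploit that if $|\s(K)|=4$ and $n(K)\leq 2$ then the $2\times 2$ matrix $A(-1)$ is definite with $A(1)=\diag(\pm 1,\pm 1)$, and the resulting rigidity on the entries of $A(-1)$ will force Stoimenow's divisibility condition on $\det(K)=\pm\det(A(-1))$.

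The main technical obstacle will be the identification $\sigma(A(z))=\s_z(K)$ and $\eta(A(z))=\eta_z(K)$ needed for the $\mu(K)$ bound: classically the Levine--Tristram invariants are defined from a Seifert matrix, not from an arbitrary hermitian presentation of the Blanchfield form, so one must show that any two such hermitian presentations differ by a stable congruence over $\zt$ compatible with specialization on the unit circle. The behaviour at $t=1$ is precisely where the diagonal $\pm 1$ hypothesis on $A(1)$ is essential, since without it the signature jump at $z=1$ would be uncontrolled. A secondary issue will be matching sign conventions between the condition $A(1)\sim\diag(\pm 1)$ and the distinction between positive and negative crossing changes that enters Lickorish's and Stoimenow's formulations.
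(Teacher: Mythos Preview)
Your overall plan is right and matches the paper's structure, but the argument for $\mu(K)$ contains a genuine error. You claim that $\sign A(z)$ and $\null A(z)$ ``recover'' $\sigma_z(K)$ and $\eta_z(K)$ and deduce $\eta_z(K)+|\sigma_z(K)|\leq n(K)$. The nullity identification is correct, but the signature one is not: the correct statement (Lemma~\ref{lem:signetaa}) is
\[
\sigma_z(K)=\sign A(z)-\sign A(1),
\]
and the correction term $\sign A(1)$ can be as large as $\pm n(K)$. Concretely, for $K=5_1$ one has $n(K)=2$ while $|\sigma(K)|=4$, so your inequality $\eta_z+|\sigma_z|\leq n(K)$ fails outright. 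The fix is exactly what the paper does: write $A(1)=\diag(-1,\dots,-1,1,\dots,1)$ with $u_+$ entries $-1$ and $u_-$ entries $+1$, and bound $\eta_z+\sigma_z\leq 2u_+$ and $\eta_z-\sigma_z\leq 2u_-$ \emph{separately}; summing and halving gives $\mu(K)\leq u_++u_-=n(K)$. You anticipate an issue at $t=1$ in your final paragraph, but the resolution is not that diagonalizability of $A(1)$ controls a signature jump at $z=1$; rather, $\sign A(1)$ enters as an additive shift in the formula for $\sigma_z(K)$ at \emph{every} $z\in S^1$, and it is the split of $n(K)$ into $u_++u_-$ that makes the two-sided bound work.

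A smaller point: the bound for $\rho_{\zt}(K)$ is not tautological in the way you suggest. In the paper $\rho_{\zt}(K)$ is defined via the Witt group $W(\zt\to\Q(t))$, not via presentations of the Blanchfield form, and what one actually proves is $n(K)\geq\tfrac12\rho_{\zt}(K)$ (the factor $\tfrac12$ is essential and matches the $\tfrac12$ in $g_4^{top}\geq\tfrac12\rho_{\zt}$). The argument sends $A(t)\mapsto A(t)\oplus(-A(1))$ and invokes Proposition~\ref{prop:ra81} to check that this descends to isometry classes of Blanchfield pairings; this is where the condition $\det A(1)=\pm1$ enters. Your sketch of the remaining parts (Nakanishi index, Fogel--Murakami--Rickard as the $1\times 1$ case, Lickorish/Jabuka via specialization at $t=-1$, Stoimenow via the definite $2\times2$ matrix $A(-1)$) is correct and agrees with the paper, including your observation that the key technical input is a stable-congruence statement for hermitian presentations of $\lambda(K)$---that is precisely Proposition~\ref{prop:ra81}.
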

The precise statements and the proofs are given in the indicated parts of Section~\ref{section:classicallower}.

\begin{remark}
The fact that $n(K)$ subsumes all other classical lower bounds does not invalidate those earlier bounds, since all but the first one are directly computable,
whereas at the moment there is no algorithm to calculate $n(K)$ in general.
\end{remark}

\begin{remark}
The fact that the earlier classical lower bounds on the unknotting number give in fact lower bounds on the algebraic unknotting number
can also at times be deduced from reading carefully the original proofs.
\end{remark}

Fogel \cite{Fo94} proved the following  remarkable partial converse to Theorem \ref{thm1}.

\begin{theorem}
If $n(K)=1$, then $u_a(K)=1$.
\end{theorem}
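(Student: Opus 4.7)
The plan is to realize the $1\times 1$ presentation of the Blanchfield pairing by a concrete geometric crossing change, which requires extracting an explicit embedded crossing disk from the algebraic data.

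Step 1 (Algebraic setup). By hypothesis there exists a hermitian $1\times 1$ matrix $A(t)=(p(t))$ over $\zt$ with $p(t)=p(t^{-1})$ and $p(1)=\epsilon\in\{\pm 1\}$, such that $\lambda(A)\cong \lambda(K)$. The Alexander module is thus cyclic:
\[ H_1(X(K);\zt)\cong \zt/(p(t)), \]
with distinguished generator $h$ satisfying $\lambda(h,h)\equiv p(t)^{-1}\bmod \zt$. Note that $\Delta_K(t)\doteq p(t)$, and the lower bound $u_a(K)\geq n(K)=1$ from Theorem~\ref{thm1} already tells us $K$ is not trivial to begin with.

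Step 2 (Geometric realization). The heart of the proof is to represent $h$ by an embedded simple closed curve $\gamma\subset X(K)$ which is the boundary of an embedded disk $D\subset S^3$ meeting $K$ transversely in exactly two points of opposite sign, i.e.\ a \emph{crossing disk}. This has two parts: first, realize $h$ by some embedded simple closed curve in $X(K)$ using general position in dimension three; second, modify $\gamma$ within its Alexander module class—by isotopy in $X(K)$ and by band surgeries that change $\gamma$ by elements of the form $(t-1)\cdot(\text{anything})$ lifted to the infinite cyclic cover—until $\gamma$ becomes null-homologous in $S^3$ in the standard way of a crossing disk boundary (linking $K$ algebraically zero times but geometrically twice, once with each sign). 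The sign condition $p(1)=\epsilon$ dictates whether the resulting crossing change should be positive or negative: a summand with $p(1)=-1$ is eliminated by a positive crossing change and one with $p(1)=+1$ by a negative one, matching the sign convention of Theorem~\ref{thm1}.

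Step 3 (Effect of the crossing change). Let $K'$ be obtained from $K$ by the crossing change along $D$. Topologically, this replaces $X(K)$ by the manifold obtained from $\pm 1$--surgery on $\gamma$, where the surgery coefficient is determined by $\epsilon$. An equivariant Mayer--Vietoris computation on the infinite cyclic cover shows that the Alexander module of $K'$ is the quotient of $H_1(X(K);\zt)$ by the submodule generated by $h$, with the relation being consistent with the self-linking $\lambda(h,h)=p(t)^{-1}$. Since $h$ generates all of $\zt/(p(t))$, the Alexander module of $K'$ vanishes, so $\Delta_{K'}(t)=1$. Hence $u_a(K)\leq 1$, and combined with $u_a(K)\geq n(K)=1$ we conclude $u_a(K)=1$.

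The main obstacle lies entirely in Step 2: converting the algebraic generator $h$—which a priori is only a homology class on the infinite cyclic cover—into an embedded curve in $S^3\setminus K$ that bounds a crossing disk \emph{with the prescribed self-linking $p(t)^{-1}$ and the prescribed sign $\epsilon$}. The self-linking is intrinsic to the homology class, so no freedom remains there; the delicate point is showing that this class always contains a representative realizable as the boundary of a genuine crossing disk. This is where Fogel's argument concentrates its effort, typically by building the desired curve through a careful surgery on an arbitrary representative, using the cyclicity of the module to absorb obstructions into boundaries.
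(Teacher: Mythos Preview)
The paper does not itself prove this theorem; it attributes the result to Fogel and refers the reader to \cite[Section~3]{Fo93} and \cite[Section~4]{Fo94} for the argument. So there is no ``paper's own proof'' to compare against beyond the brief remark that Fogel's method is constructive and in many cases produces an explicit diagram and crossing change.

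That said, your proposal is not a proof: it is an outline that correctly locates the difficulty and then does not resolve it. Step~2 is the entire content of the theorem, and you do not carry it out. Saying that one can ``modify $\gamma$ within its Alexander module class by isotopy and band surgeries\ldots until $\gamma$ becomes a crossing disk boundary'' is an assertion, not an argument; nothing in your write-up shows why an arbitrary embedded representative of $h$ can be promoted to the boundary of an embedded disk meeting $K$ in exactly two points. Your final paragraph openly concedes this (``the main obstacle lies entirely in Step~2\ldots this is where Fogel's argument concentrates its effort''), which is an honest assessment but also an admission that what you have written is a plan rather than a proof.

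There is also a looseness in Step~3: after $\pm 1$--surgery on $\gamma$, the new Alexander module is not literally the quotient by the submodule generated by $h$; the surgery adds a meridional generator together with a relation determined by the framing and the equivariant self-linking. The conclusion you want does follow once the bookkeeping is done correctly, but the sentence as written is not accurate. Finally, note that the lower bound $u_a(K)\geq 1$ does not require Theorem~\ref{thm1}: $n(K)=1$ already forces $\Delta_K(t)\neq 1$, hence $u_a(K)\geq 1$ trivially.
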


Fogel's proof is constructive in the sense that in many cases, given a knot $K$ with $n(K)=1$, one can actually find explicitly a diagram and
a crossing change which turns $K$ into an Alexander polynomial one knot.
We refer to \cite[Section~3]{Fo93} and \cite[Section~4]{Fo94} for more details. The results of Fogel make plausible the following conjecture.

\begin{conjecture}\label{conj:nua}
For any knot $K$ we have
\[ n(K)=u_a(K).\]
\end{conjecture}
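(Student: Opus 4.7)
The plan is to establish the inequality $u_a(K) \leq n(K)$, since the reverse inequality is already given by Theorem~\ref{thm1}. I would proceed by induction on $n = n(K)$, taking Fogel's theorem as the key starting point beyond the trivial case $n(K) = 0$, where $K$ already has trivial Alexander polynomial and hence $u_a(K) = 0$ by definition.

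For the inductive step, suppose $n(K) = n \geq 2$ and let $A(t)$ be a hermitian matrix of size $n$ over $\zt$ representing the Blanchfield pairing $\lambda(K)$, with $A(1)$ congruent over $\Z$ to a diagonal matrix with $\pm 1$ entries. The goal is to perform a single crossing change on $K$ producing a knot $K'$ whose Blanchfield pairing is represented by an $(n-1)\times(n-1)$ hermitian matrix $A'(t)$ that still satisfies the diagonal $\pm 1$ condition at $t=1$; by induction this would give $u_a(K') \leq n-1$ and thus $u_a(K) \leq n$.

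The key algebraic reduction would be to show that, up to the equivalence relation on matrices representing a fixed Blanchfield pairing, one can split $A(t)$ into a block of size $1$ with diagonal entry $\pm 1$ at $t = 1$, plus a block of size $n-1$. Such a splitting is easy at the level of $A(1)$ over $\Z$, but the difficulty is to carry it out while respecting the full $\zt$-structure and the hermitian form. A useful intermediate theorem would be a ``stable equivalence'' statement: any two matrices of equal size representing $\lambda(K)$ with diagonal $\pm 1$ reductions at $t=1$ differ by a sequence of elementary moves of Murakami type, each realizable by a single crossing change. One should also track signs carefully, since by Theorem~\ref{thm1} a $+1$ diagonal entry must correspond to a negative crossing change and a $-1$ entry to a positive crossing change.

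The principal obstacle is the geometric realization of the algebraic splitting: translating an elementary move on the matrix into a genuine crossing change on a diagram of $K$. Fogel's proof in the case $n = 1$ is explicitly constructive on Seifert surfaces, so a natural attack is to extend his techniques by working with a handle decomposition of $X(K)$, or with a Seifert surface carrying a basis that exhibits the desired block decomposition, and then convert the algebraic modification of the Seifert matrix into a local diagrammatic move. I expect the hardest part to be ruling out a topological obstruction which would force the algebraically splittable summand to nevertheless require more than one crossing change to remove; overcoming this may well demand new tools beyond Blanchfield-pairing bookkeeping, which is why, despite the very suggestive evidence assembled in the paper, the statement is left as a conjecture rather than a theorem.
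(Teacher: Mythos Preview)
The statement you are addressing is explicitly a \emph{conjecture} in the paper, not a theorem: the authors do not prove it and say only that they ``plan to investigate this conjecture in a future paper.'' There is therefore no proof in the paper to compare your proposal against. You yourself acknowledge this in your final sentence, so your write-up is better described as a strategy sketch than a proof attempt.

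As a strategy sketch it is reasonable and honest about its gaps. The inductive plan --- peel off a $1\times 1$ block from a minimal representing matrix $A(t)$ and realize that step by a single crossing change --- is the natural thing to try, and you correctly identify the crux: one needs a geometric realization result showing that such an algebraic reduction can always be achieved by an actual crossing change on some diagram of $K$. Fogel's theorem handles exactly the case $n=1$, and there is no known extension to higher $n$; this is precisely why the equality remains conjectural. Your invocation of Proposition~\ref{prop:ra81}-style moves and Murakami's algebraic unknotting operations is appropriate context, but note that even the purely algebraic step (splitting off a $1\times 1$ summand over $\zt$ rather than just over $\Z$ at $t=1$) is not automatic and would itself require justification.
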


We plan to investigate this conjecture in a future paper.

\begin{remark}
Theorem \ref{thm:subsume}  can in particular be viewed as evidence towards Conjecture \ref{conj:nua}.
\end{remark}

\subsection{Diagrammatic comparison of classical invariants}

In order to show how the newly defined invariant $n(K)$ fits into the bigger picture of knot invariants,  we present in
Figure \ref{fig:ddoki}
a diagram which shows the relationship between various topological and classical invariants.
Beyond the invariants introduced above we will also use the following topological invariants:
\[
\ba{rcl}
g_3(K)&=&\mbox{minimal genus of a  surface in $S^3$ cobounding the knot $K$},\\
g_4^{smooth}(K)&=&\mbox{minimal genus of a smooth surface in $D^4$ cobounding the knot $K$},\\
g_4^{top}(K)&=&\mbox{minimal genus of a locally flat surface in $D^4$ cobounding the knot $K$},\ea \]
and the following classical invariants
\[ \ba{rcl}
\eta(K)&:=&\max\{ \eta_z(K) \, |\, z\in\C\sm \{0\}\},\\
m_{\R}(K)&:=& \mbox{minimal number of generators of $H_1(X(K);\realt)$},\\
n_{\R}(K)&:=& \mbox{minimal size of a hermitian matrix over $\realt$ representing}\\
&& H_1(X(K);\R\tpm)\times H_1(X(K);\R\tpm)\to \R(t)/\R\tpm.
\ea
\]
It is straightforward to see  that $m_{\R}(K)=\eta(K)$.
In a future  paper \cite{BF12b} we will show  that furthermore
\[ n_\R(K)=\max\{\mu(K),\eta(K)\}.\]
In Figure \ref{fig:ddoki} we use the following notation:
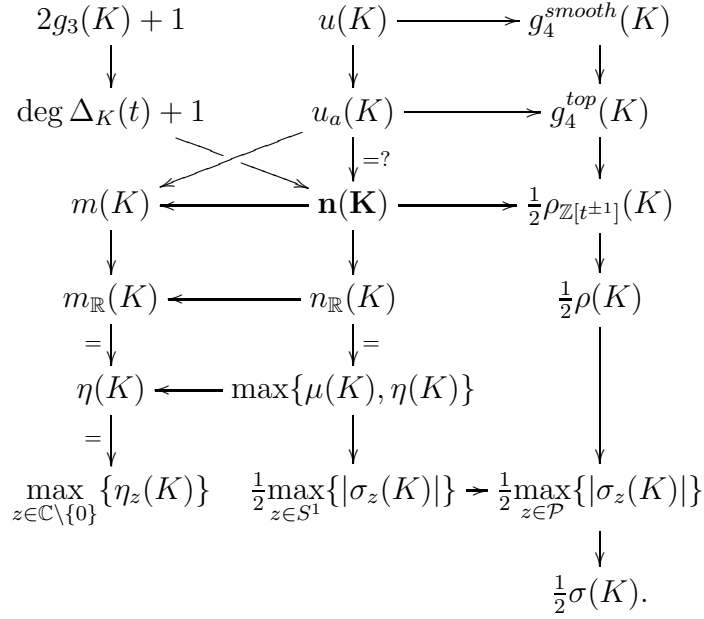
\begin{figure}
\[ \xymatrix @R=0.2in @C=0.01in{
2g_3(K)+1\ar[d] & u(K) \ar[d] \ar[r]& g_4^{smooth}(K)\ar[d] \\
\deg\Delta_K(t)+1\ar[dr]|\hole & u_a(K)\ar[dl] \ar[d]^{=?} \ar[r]& g_4^{top}(K)\ar[d] \\
m(K)\ar[d]& \ar[l] \mathbf{n(K)}\ar[d]\ar[r]& \frac{1}{2}{\rho}_{\zt}(K)\ar[d] \\
m_\R(K)\ar[d]_= &\ar[l] n_\R(K)\ar[d]^=&\frac{1}{2}\rho(K)\ar[dd] \\
\eta(K)\ar[d]_=& \max\{\mu(K),\eta(K)\}\ar[l] \ar[d] && \\
\underset{z\in \C\sm\{0\}}{\max}\{\eta_z(K)\}& \frac{1}{2}\underset{{z\in S^1}}{\max}\{|\s_z(K)|\}\ar[r]& \frac{1}{2}\underset{z\in \PP}{\max}\{|\s_z(K)|\}\ar[d] \\
&&  \frac{1}{2}\s(K).}\]
\caption{Diagrammatic summary  of known invariants.}\label{fig:ddoki}
\end{figure}
given two knot invariants $f$ and $g$ we  write $f(K) \to g(K)$ if $f(K)\geq g(K)$ for every knot $K$. Furthermore we denote by $\PP$ the set of all prime power roots of unity.

The existence of an arrow is in all cases either well-known, or a tautology or it follows
from the results in this section.
If two invariants are not related by a concatenation
of arrows, then in most cases it is known that they are unrelated.
If an arrow is not decorated by an `$=$' sign, then in most cases it is known that the invariants are indeed
not equal.

\subsection{Applications and examples}

Our understanding of the relation between the $n(K)$ invariant and the presentation matrix for the linking pairing of the double
branched cover (cf. Section~\ref{section:linkingform}, especially Lemma~\ref{lem:atlinkingform}) allows us to provide new computable
 obstructions for $u(K)=2$ and $u(K)=3$, which are related to Owens' obstruction from \cite{Ow08}.
 The idea behind the results in
Sections~\ref{section:obstwo} and \ref{section:obsthree} is the following. If the Blanchfield pairing can be realized by a $n\times n$
matrix over $\Z[t^{\pm}]$, then there exists an $n\times n$ integer matrix of a certain type which represents the linking pairing $l(K)$
of the double branched cover. Up to  congruence there exist finitely many such matrices, which furthermore
in many cases can be listed explicitly.
It is then straightforward to verify whether or not $l(K)$ can be represented by any of these matrices.

Among knots with up to $12$ crossings there are $25$ knots with  $m(K)\leq 2$ and $\mu(K)\leq 4$,
but where our approach shows that $n(K)\geq 3$. Out of these $25$ knots the Stoimenow obstruction detects four knots,
to the best of our knowledge no other classical obstruction applies to these 25 knots.
Also, in most cases the Rasmussen $s$--invariant and the  Ozsv\'ath--Szab\'o $\tau$--invariant can not detect the unknotting number.
We furthermore checked the $u(K)=3$ obstruction for all knots with
up to 14 crossings with $|\s(K)|=6$ and $m(K)\leq 3$. We found that it applies
precisely to two such knots, namely $14n_{12777}$ and $14a_{4637}$.
We have not yet implemented the obstruction to $u(K)=n$ for higher values of $n$.

Our new obstruction to $n(K)=2$ now allows us to completely determine the algebraic unknotting number for all knots with up to 11 crossings.
Details are given in Section \ref{section:ex} and in \cite{BF12a}.


\subsection*{Acknowledgment.}
This paper greatly benefitted from conversations with Baskar Balasubramanyam, Slaven Jabuka, Raymond Lickorish,
Brendan Owens, Andrew Ranicki,
Andr\'as Stipsicz and  Alexander Stoi\-me\-now.
We  would like to thank Micah Fogel for sending us his thesis and Hitoshi Murakami for supplying us with a copy of \cite{Muk90}.
The website `knotinfo' \cite{CL11} which is maintained by Jae Choon Cha and Chuck Livingston
has been an invaluable tool for finding examples and testing our algorithms.
We are also very grateful that Alexander Stoimenow provided us with braid descriptions for knots up to 14 crossings and
we wish to thank Julia Collins for help with obtaining Seifert matrices from the braid descriptions.

Finally we also would like to express our gratitude
to the Renyi Institute for its hospitality and to the London Mathematical Society for a travel grant.

\section{Proof of Theorem \ref{thm1}}\label{section:proofthm1}

Throughout Section \ref{section:proofthm1} we write
\[ \L:=\zt \mbox{ and } \Omega:=\Q(t).\]
As usual we also identify $\L$ with the group ring of $\Z$.

\subsection{Poincar\'e duality and the universal coefficient spectral sequence}\label{section:basics}

In this section we will collect several facts which we will use continuously throughout the paper.

Throughout the paper $X$ will always denote a manifold whose first homology group is isomorphic to $\Z$.
We denote the infinite cyclic covering of $X$ by $\pi\colon \widehat{X}\to X$.
Given a submanifold $Y\subset X$  we write $\widehat{Y}=\pi^{-1}(Y)$.
Note that $\Z$ is the deck transformation group of $\widehat{X}$. This defines a canonical left action
of $\L=\Z[\Z]$ on $C_*(\widehat{X},\widehat{Y};\Z)$. Given any $\L$-module $N$ we now define
\[ H^*(X,Y;N):=H_*(\hom_{\L}(C_*(\widehat{X},\widehat{Y};\Z),N))\]
and
\[ H_*(X,Y;N):=H_*(\ol{C_*(\widehat{X},\widehat{Y};\Z)}\otimes_{\L} N).\]
(Here, and throughout the paper, given a module $H$ over $\L$ we denote
by $\ol{H}$ the module with the involuted $\L$-structure, i.e. $\ol{H}=H$ as abelian groups,
but multiplication by $p\in \L$ in $\ol{H}$ is the same as multiplication by $\ol{p}$ in $H$.)
In particular we can consider the modules
$H_*(X,Y;\L)$, $H^*(X,Y;\L)$, $H_*(X,Y;\O)$, and $H^*(X,Y;\O)$.
When $Y=\emptyset$, then we will suppress $Y$ from the notation.

Note that the quotient field $\O$ is flat over the ring $\L$.
In particular we have
$H_*(X,Y;\O)\cong H_*(X,Y;\L)\otimes_{\L}\O$ and $H^*(X,Y;\O)\cong H^*(X,Y;\L)\otimes_{\L}\O$.

Suppose that $X$ is an $n$-manifold, then for any $\L$-module $N$ Poincar\'e duality
defines isomorphisms of $\L$-modules
\[ \ba{rcl} H_i(X,\partial X;N)&\cong & \ol{H^{n-i}(X;N)}\\
H_i(X;N)&\cong & \ol{H^{n-i}(X,\partial X;N)}.\ea \]

Finally we recall the  universal coefficient spectral sequence (UCSS),
we refer to  \cite[Theorem~2.3]{Lev77} for details. Let $N$ be any $\L$-module. Then the UCSS starts with
 $E_{p,q}^2=\ext^p_{\L}(H_q(X;\L),N)$ and converges
to $H^*(X;N)$. The differentials at the $r-$stage of this sequence have degree $(1-r,r)$.
Note that for any two $\L$-modules $H$ and $N$
the module $\ext^0_{\L}(H,N)$ is canonically isomorphic to
$\hom_{\L}(H,N)$. Also note that
\[ \ext^p_{\L}(H,N)=0\]
for any $p>2$ since $\L$ has cohomological dimension 2.
Finally  note that $\Z$, viewed as a $\zt$--module with trivial $t$--action, admits a resolution of length 1.
It now follows that $\ext^p_{\L}(\Z,N)=0$ for any $p>1$.

\subsection{Seifert matrices and Blanchfield pairings}\label{section:seifertbf}

Let $K\subset S^3$ be a knot. We  consider the following sequence of maps:
\be\label{equ:defbl}
\ba{rcl}
\Phi\colon H_1(X(K);\L)&\to& H_1(X(K),\partial X(K);\L)\\
&\to& \ol{H^2(X(K);\L)}\xleftarrow{\cong} \ol{H^1(X(K);\O/\L)}\\
&\to &\ol{\hom_{\L}(H_1(X(K);\L),\O/\L)}.\ea \ee
Here the first map is the inclusion induced map, the second map is Poincar\'e duality,
the third map comes from the long exact sequence in cohomology corresponding to the coefficients
$0\to \L\to \O\to \O/\L\to 0$, and the last map is the evaluation map.
It is well--known that the first map is an isomorphism, the second map is obviously an isomorphism,
and it follows from the UCSS (and the straightforward calculation that $\ext_{\L}^p(\Z,\O/\L)=0$ for $p\geq 1$) that the evaluation map is also an isomorphism.
It follows that the above maps thus  define a non-singular pairing
\[ \ba{rcl} \lambda(K)\colon  H_1(X(K);\L)\times H_1(X(K);\L)&\to& \O/\L\\
(a,b)&\mapsto & \Phi(a)(b),\ea \]
called the \emph{Blanchfield pairing of $K$}. This pairing is well-known to be hermitian, in particular   $\l(K)(a_1,a_2)=\ol{\l(K)(a_2,a_1)}$ and $\l(K)(\mu_1 a_1,\mu_2a_2)=\ol{\mu_1}\l(K)(a_1,a_2)\mu_2$
for $\mu_i\in \L, a_i\in H_1(X(K);\L)$.
We refer to  \cite{Bl57} for an alternative definition and for further details.

\begin{remark}
The Alexander polynomial $\Delta_K(t)$ is well-known to annihilate the Alexander module.
It now follows easily from the definition of $\l(K)$, that $\l(K)$ takes values in $\Delta_K(t)^{-1}\zt/\zt\subset \Q(t)/\zt$. This fact also follows from the description of the Blanchfield pairing in terms of Seifert matrices due to Kearton \cite[Section~8]{Ke75} which we recall below.
\end{remark}

Let  $V$ be any matrix of size $2k$ which is $S$--equivalent to a Seifert matrix for $K$.
Note that $V-\vt$ is antisymmetric and it satisfies $\det(V-\vt)=(-1)^k$.
It is well--known that, possibly after replacing $V$ by $PVP^t$ for an appropriate $P$, the following equality holds:
\be \label{equ:vvt} V-\vt= \bp 0 & \id_k \\ -\id_k &0 \ep.\ee
Following \cite[Section~4]{Ko89} we now define $A_K(t)$ to be the matrix
\begin{multline}\label{equ:akt}
\bp (1-t^{-1})^{-1}\id_k &0\\ 0&\id_k \ep V \bp \id_k&0\\0&(1-t)\id_k\ep +\\
+\bp \id_k &0 \\ 0&(1-t^{-1})\id_k \ep \vt\bp (1-t)^{-1}\id_k &0 \\ 0&\id_k
\ep.\end{multline}
Note that  the matrix $A_K(t)$  is a hermitian matrix defined over $\L$ and note that $\det(A_K(1))=(-1)^k$
(see \cite{Ko89}).
Also note that
\be \label{equ:aktlt} \bp (1-t^{-1})\id_k &0 \\ 0&\id_k\ep A_K(t)  \bp (1-t)\id_k &0 \\ 0&\id_k\ep= (1-t)V+(1-t^{-1})\vt.\ee
We now have the following proposition.

\begin{proposition}\label{prop:blakt}
Let $K$ be a knot and $A_K(t)$ as above, then
$\lambda(A_K(t))\cong  \lambda(K)$.
\end{proposition}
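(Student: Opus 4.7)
My plan is to deduce Proposition~\ref{prop:blakt} from the standard Seifert matrix formula for the Blanchfield pairing (Kearton \cite{Ke75}, as reformulated by Ko \cite{Ko89}). The matrix $A_K(t)$ and the identity (\ref{equ:aktlt}) are taken straight from Ko's paper, so the content of the proposition is essentially his main observation, and the proof will come down to an algebraic manipulation with diagonal matrices.

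First I would recall that with $V$ normalized so that $V-V^t$ has the form (\ref{equ:vvt}), the Alexander module $H_1(X(K);\L)$ and its Blanchfield pairing are computed from $V$ as follows: the matrix $B(t):=(1-t)V+(1-t^{-1})V^t$ presents the Alexander module up to $(1-t)$--torsion, and on the torsion part the pairing is given by $(a,b)\mapsto \overline a^tB(t)^{-1}b\pmod \L$. This is the version of Kearton's formula in the shape that matches (\ref{equ:aktlt}).

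Next, set $D:=\diag((1-t)\id_k,\id_k)$, so $\overline D=\diag((1-t^{-1})\id_k,\id_k)$, and read (\ref{equ:aktlt}) as $\overline D\cdot A_K(t)\cdot D=B(t)$. Inverting over $\O$ gives $A_K(t)^{-1}=D\,B(t)^{-1}\,\overline D$, whence for all $a,b\in\L^{2k}$,
\[ \overline a^t\,A_K(t)^{-1}\,b \;=\; \overline{\overline D a}^{\,t}\,B(t)^{-1}\,(\overline D b)\pmod{\L}. \]
Thus the map $\Phi(a):=[\overline D a]$ is a candidate intertwiner of $\lambda(A_K(t))$ with $\lambda(K)$. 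To conclude, I would verify that $\Phi$ descends to a well-defined $\L$--isomorphism $\L^{2k}/A_K(t)\L^{2k}\to H_1(X(K);\L)$. Both modules are $\L$--torsion of order $\Delta_K(t)$ up to units (by taking determinants in (\ref{equ:aktlt}), one recovers $\det A_K(t)=(-1)^k\Delta_K(t)$ up to a unit), so it suffices to check surjectivity, and this reduces to the standard fact that $1-t$ acts as an automorphism on the Alexander module (a consequence of $\Delta_K(1)=1$).

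The main subtlety is the $(1-t)$--torsion bookkeeping: $B(t)$ does not literally present the Alexander module, since $\det B(t)$ carries an extraneous factor of $(1-t)^{2k}$, so Kearton's pairing formula must be read on the torsion submodule (or after localising at $1-t$). Ko's choice of $A_K(t)$ is designed precisely to eliminate this: $\det A_K(1)=(-1)^k$ is a unit in $\Z$, so $\L^{2k}/A_K(t)\L^{2k}$ has no $(1-t)$--torsion and $\Phi$ lands cleanly inside $H_1(X(K);\L)$. Once this torsion accounting is in place, the identification of the two pairings is the algebraic manipulation above; the hard part is really just setting up a version of Kearton's formula that fits through (\ref{equ:aktlt}) verbatim.
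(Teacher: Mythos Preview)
Your overall strategy---reduce to Kearton's Seifert-matrix description of the Blanchfield pairing and find an explicit matrix intertwiner via (\ref{equ:aktlt})---is the same as the paper's. However, the specific version of Kearton's formula you state is not correct, and this breaks your argument.

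Kearton's formula (as quoted in the paper, equation (\ref{equ:ke75})) presents $\lambda(K)$ on $\L^{2k}/(Vt-V^t)\L^{2k}$ by the matrix $(t-1)(Vt-V^t)^{-1}$. Your $B(t)=(1-t)V+(1-t^{-1})V^t$ factors as $(1-t)t^{-1}(Vt-V^t)$, so $B(t)^{-1}=t(1-t)^{-1}(Vt-V^t)^{-1}$, which differs from $(t-1)(Vt-V^t)^{-1}$ by the non-unit $-t(t-1)^{-2}$. In particular the formula $(a,b)\mapsto \overline a^tB(t)^{-1}b$ is not even well-defined on $\L^{2k}/(Vt-V^t)\L^{2k}$: if $a=(Vt-V^t)v$ one computes $\overline a^tB(t)^{-1}b=-(1-t)^{-1}\overline v^{\,t}b\notin\L$ in general. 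Consequently your candidate $\Phi(a)=\overline D a$ does \emph{not} intertwine $\lambda(A_K(t))$ with the actual Blanchfield pairing; a direct check gives $\lambda(K)(\Phi(a),\Phi(b))=\overline a^{\,t}D(t-1)(Vt-V^t)^{-1}\overline D\,b$, whereas $A_K(t)^{-1}=D\cdot t(1-t)^{-1}(Vt-V^t)^{-1}\cdot\overline D$, and these disagree.

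The paper fixes exactly this by choosing a different change-of-basis matrix $P=\diag(t^{-1}\id_k,(t-1)^{-1}\id_k)$, for which one verifies $PA_K(t)\overline P^t=(t-1)^{-1}(Vt-V^t)$, so that $(PA_K(t)\overline P^t)^{-1}$ is \emph{exactly} the Kearton matrix $(t-1)(Vt-V^t)^{-1}$. Since $P$ has entries only in $\L_0=\Z[t^{\pm1},(1-t)^{-1}]$, the argument is carried out after localizing at $1-t$ and then descends back to $\L$ using that $1-t$ acts invertibly on the Alexander module---the point you correctly identify as the main subtlety, but which by itself does not rescue the wrong intertwiner.
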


Note that the isometry type of the Blanchfield pairing in fact determines
the $S$-equivalence class of the Seifert matrix, see
\cite{Tro73} and \cite{Ran03}. In that sense the Blanchfield pairing is a `complete' classical invariant, i.e. it determines all other classical invariants.

\begin{proof}
First note that the Blanchfield pairing $\lambda(K)$ is isometric to the following pairing
(we refer to \cite[Section~8]{Ke75} for details):
\be \label{equ:ke75} \xymatrix{ \L^{2k}/(Vt-\vt)\L^{2k} \times \L^{2k}/(Vt-\vt)\L^{2k} \ar[rrr]^-{(t-1)(Vt-\vt)^{-1}}&&&  \O/\L.}\ee
The notation we use here, and similarly below, means that to $a,b\in \L^{2k}$
we associate $ \ol{a}^t (1-t)(Vt-\vt)^{-1}b$.
We now write $\L_0:=\Z[t,t^{-1},(1-t)^{-1}]$ and
we let
\[ P:= \bp t^{-1}\id_k &0\\ 0&(t-1)^{-1}\id_k \ep.\]
We consider the following commutative diagram:
\[ \xymatrix{
\L^{2k}/A_K(t)\L^{2k}\times \L^{2k}/A_K(t)\L^{2k}  \ar[d] \ar[rrr]^-{A_K(t)^{-1}} &&& \O/\L \ar[d]\\
\L_0^{2k}/A_K(t)\L_0^{2k} \times \L_0^{2k}/A_K(t)\L_0^{2k}\ar[d]^{(v,w)\mapsto (Pv,Pw)} \ar[rrr]^-{A_K(t)^{-1}} &&& \O/\L_0\ar[d] \\
\L_0^{2k}/PA_K(t)\L_0^{2k}\times \L_0^{2k}/PA_K(t)\L_0^{2k} \ar[d]^-{=}
\ar[rrr]^-{(PA_K(t)^{-1}\ol{P}^t)^{-1}} &&& \O/\L_0\ar[d] \\
\L_0^{2k}/(Vt-\vt)\L_0^{2k}\times \L_0^{2k}/(Vt-\vt)\L_0^{2k}  \ar[rrr]^-{(t-1)(Vt-\vt)^{-1}} &&& \O/\L_0 \\
\L^{2k}/(Vt-\vt)\L^{2k}\times \L^{2k}/(Vt-\vt)\L^{2k}  \ar[u]\ar[rrr]^-{(t-1)(Vt-\vt)^{-1}} &&& \O/\L.\ar[u]}\]
Here the top vertical maps and the bottom  vertical maps are induced by the inclusion $\L\to \L_0$.
Recall that multiplication by $t-1$ induces an isomorphism of $\L^{2k}/(Vt-\vt)\L^{2k}$
and of $\L^{2k}/A_K(t)\L^{2k}$ (see \cite{Lev77}). It follows that the two aforementioned maps are isomorphisms of $\L$-modules.
For the third vertical map we made use of the fact that
\[ PA_K(t)\ol{P}^t=(t-1)^{-1}(Vt-\vt)\]
and we used that
\[ P_AK(t)\L_0^{2k}=PA_K(t)\ol{P}^t\L_0^{2k}=(t-1)^{-1}(Vt-\vt)\L_0^{2k}=(Vt-\vt)\L_0^{2k}.\]
Since all vertical maps on the left in the above commutative diagram are isomorphisms we deduce from (\ref{equ:ke75}) that $\lambda(A_K(t))\cong \lambda(K)$.

\end{proof}

We conclude this section with the following lemma:

\begin{lemma}\label{lem:diag1}
Let $V$ be a matrix which is $S$--equivalent to a Seifert matrix of the knot $K$ and such that $V$ satisfies \eqref{equ:vvt}.
Let $A_K(t)$ be the matrix as in \eqref{equ:akt}. Then the matrix $A(t)=A_K(t)\oplus(1)$
$($i.e. the block diagonal sum of the matrices $A_K(t)$ and $(1))$
represents $\l(K)$ and the bilinear matrix $A(1)$ is diagonalizable over $\mathbb{Z}$.
\end{lemma}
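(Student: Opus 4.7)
The lemma has two parts. The assertion $\l(A(t))\cong \l(K)$ is immediate from Proposition \ref{prop:blakt}: since $(1)$ is a unit in $\L$, the extra block adjoins only a trivial summand $\L/\L=0$ to the underlying module, so the pairing $\l(A(t))$ coincides with $\l(A_K(t))\cong\l(K)$.

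For the diagonalizability of $A(1)$, the plan is to first compute $A_K(1)$ explicitly and then reduce by successive $\Z$-congruences. Write the Seifert matrix in $k\times k$ blocks, $V=\bp V_{11}&V_{12}\\V_{21}&V_{22}\ep$. Condition \eqref{equ:vvt} forces $V_{11}$ and $V_{22}$ to be symmetric and $V_{12}-V_{21}^t=I_k$. Expanding \eqref{equ:akt} block by block, the apparent singularities cancel thanks to the identity $(1-t^{-1})^{-1}+(1-t)^{-1}=1$, and setting $t=1$ yields
\[ A_K(1)=\bp V_{11}&-I_k\\-I_k&0\ep,\qquad A(1)=A_K(1)\oplus (1). \]

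Next, the congruence by $\bp I_k&0\\W&I_k\ep$ for an arbitrary integer matrix $W$ replaces $V_{11}$ by $V_{11}-W-W^t$ while leaving the remaining blocks fixed. Choosing $W$ upper triangular we can arrange $V_{11}$ to be diagonal with entries in $\{-1,0,1\}$, and after a permutation of basis vectors $A_K(1)$ becomes a direct sum of $2\times 2$ matrices $\bp d_i&-1\\-1&0\ep$ with $d_i\in\{-1,0,1\}$. A short direct computation shows that $\bp \pm 1&-1\\-1&0\ep$ is $\Z$-congruent to $\diag(1,-1)$, while $\bp 0&-1\\-1&0\ep$ is the standard hyperbolic plane $H$.

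It remains to absorb the hyperbolic summands using the extra $(1)$ block. An explicit $\Z$-basis change (for instance via the columns $(1,-1,1)^t,(1,0,1)^t,(0,1,-1)^t$) exhibits the isomorphism $H\oplus(1)\cong \diag(-1,1,1)$. Iterating this identity, each hyperbolic summand is converted into three $\pm 1$ diagonal entries at the expense of one $\pm 1$, so all hyperbolic summands can be eliminated in turn, yielding a $\pm 1$ diagonal form congruent to $A(1)$. The main obstacle is exactly this last step: a single hyperbolic block alone is \emph{not} $\Z$-congruent to any $\pm 1$ diagonal matrix (it is even unimodular), and the role of the supplementary $(1)$ block is to provide an odd vector -- which, by the classification of indefinite odd unimodular integral forms, is precisely what is needed to force $A(1)$ into diagonal $\pm 1$ form.
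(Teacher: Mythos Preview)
Your proof is correct and reaches the same conclusion as the paper, but by a more explicit route. The paper computes $A_K(1)=\bp B&I\\I&0\ep$ (your off-diagonal sign is opposite, but the two are congruent via $\diag(I_k,-I_k)$), observes that this matrix is congruent over $\Q$ to $\diag(I_k,-I_k)$ and hence indefinite, and then simply notes that $A(1)=A_K(1)\oplus(1)$ is an odd, indefinite, unimodular integral form; the classification theorem \cite[Theorem~4.3]{HM73} then yields diagonalizability in one stroke. You instead carry out the diagonalization by hand: killing the $(1,1)$-block modulo $W+W^t$, splitting into $2\times 2$ pieces, and absorbing the hyperbolic summands one at a time using the extra $(1)$. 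Your approach is more constructive---one actually sees the diagonal form emerge---while the paper's is shorter but relies on the structure theorem as a black box. Your closing remark invoking the classification of indefinite odd unimodular forms is in fact exactly the paper's argument, so you have essentially given both proofs.

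One small slip: your explicit basis $(1,-1,1)^t,(1,0,1)^t,(0,1,-1)^t$ diagonalizes $\bp 0&1\\1&0\ep\oplus(1)$, not the form $H=\bp 0&-1\\-1&0\ep$ you actually wrote down (for that $H$, the first vector has norm $3$). Either flip a sign in $H$ first, or use $(1,1,1)^t,(1,0,1)^t,(0,1,1)^t$ instead.
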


\begin{proof}
Let $V$ be a Seifert matrix of the knot $K$ of size $2k$ satisfying \eqref{equ:vvt}.
Then we can write
\[ V=\bp B & C+I \\ C^t & D \ep \]
where $B$, $C$ and $D$ are $k\times k$ matrices, $I$ is the identity matrix and moreover  $B=B^t$, $D=D^t$. It is easy to compute that
\[A_K(1)=\bp B & I \\ I & 0\ep.\]
It is straightforward to verify that $A_K(1)$ is congruent over $\mathbb{Q}$
to the block sum of $I$ and $-I$, hence $A_K(1)$, viewed as a symmetric bilinear pairing,  is indefinite. If we consider $A(t)=A_K(t)\oplus(1)$ (which
clearly represents the same Blanchfield pairing as $A_K(t)$),
then $A(1)$  is an indefinite, odd symmetric bilinear pairing over $\mathbb{Z}$, hence by \cite[Theorem 4.3]{HM73} it is diagonalizable.
\end{proof}

\subsection{Definition of $n(K)$}

Let $K\subset S^3$ be a knot.
It follows from Lemma \ref{lem:diag1} that it makes sense to  define $n(K)$ as  the minimal size of a hermitian matrix $A$ over $\zt$ such that
\begin{itemize}
\item $\lambda(A)\cong \lambda(K)$;
\item the matrix $A(1)$ is congruent over $\Z$ to a diagonal matrix which has $\pm 1$'s on the diagonal.
\end{itemize}
In fact we can use Lemma \ref{lem:diag1} to deduce a more precise statement.

\begin{lemma}\label{lem:ndegdelta}
For any knot $K$ we have the following inequality
\[ n(K)\le \deg\Delta_K(t)+1.\]
\end{lemma}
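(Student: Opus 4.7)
The plan is to produce an explicit hermitian matrix of size $\deg\Delta_K(t)+1$ satisfying the two defining conditions of $n(K)$, by applying Lemma~\ref{lem:diag1} to a Seifert matrix of minimum possible size.

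First, I would invoke Trotter's theorem on $S$-equivalence \cite{Tro73}, which guarantees the existence of a Seifert matrix $V$ in the $S$-equivalence class $V_K$ of size $2k := \deg \Delta_K(t)$ (i.e. a non-degenerate representative, whose size equals the breadth of the Alexander polynomial). Since $V - V^t$ is then an antisymmetric unimodular integer matrix on $\Z^{2k}$, the structure theorem for integral symplectic forms implies that it is $\Z$-congruent to the standard hyperbolic form; after replacing $V$ by $PVP^t$ for a suitable $P \in \GL(2k,\Z)$ I may therefore assume that \eqref{equ:vvt} holds.

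Next, I would form $A_K(t)$ as in \eqref{equ:akt} and set $A(t) := A_K(t) \oplus (1)$. This is a hermitian matrix over $\L$ of size $2k+1 = \deg \Delta_K(t) + 1$. By Lemma~\ref{lem:diag1} it represents the Blanchfield pairing, i.e. $\lambda(A(t)) \cong \lambda(K)$, and the symmetric integer matrix $A(1)$ is diagonalizable over $\Z$. Moreover $\det A(1) = \det A_K(1) \cdot 1 = (-1)^k \in \{\pm 1\}$, so $A(1)$ is unimodular; consequently the diagonal entries appearing in any $\Z$-diagonalization of $A(1)$ must be units in $\Z$, hence $\pm 1$. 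Thus $A(t)$ realizes both conditions in the definition of $n(K)$, and the claimed inequality follows.

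The only genuine subtlety I foresee is the first step: extracting a Seifert matrix of size exactly $\deg \Delta_K(t)$ rather than $2g_3(K)$ requires Trotter's reduction to a non-degenerate representative of the $S$-equivalence class. Everything else is a direct consequence of Lemma~\ref{lem:diag1} combined with the standard observation that an indefinite odd unimodular symmetric bilinear form over $\Z$ diagonalizes with $\pm 1$ entries.
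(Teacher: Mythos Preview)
Your proposal is correct and follows essentially the same route as the paper's proof: both obtain a non-singular Seifert matrix of size $\deg\Delta_K(t)$ satisfying \eqref{equ:vvt} (the paper cites \cite{Lev70} rather than \cite{Tro73}) and then invoke Lemma~\ref{lem:diag1}. Your explicit remark that $\det A(1)=(-1)^k$ forces the diagonal entries of any $\Z$-diagonalization to be $\pm 1$ is a welcome clarification, but otherwise the arguments coincide.
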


\begin{proof}
It is well--known, see e.g. \cite[p.~195]{Lev70} that any Seifert matrix is S--equivalent to a matrix $V$ which is non--singular
and which satisfies (\ref{equ:vvt}).
Since $\det(Vt-V^t)=\Delta_K(t)$ it follows easily that $V$ is a matrix of size $\deg \Delta_K(t)$.
The corollary now follows immediately from Lemma   \ref{lem:diag1}.
\end{proof}

\begin{remarks}
\bn
\item
Suppose that
\[ V=\bp B & C+I \\ C^t & D \ep \]
is a matrix of size $\deg \Delta_K(t)$  which is $S$--equivalent to a Seifert matrix of $K$ and $B=B^t$, $D=D^t$.
If $B$ itself represents an odd pairing, then $A_K(1)$ is already diagonalizable. In that case $n(K)\le\deg\Delta_K(t)$.
\item Fogel \cite[Section~3.3]{Fo93}  gives examples of two knots $K_1$ and $K_2$ such that $n(K_1\# K_2)=n(K_1)=n(K_2)=1$. This shows that the $n(K)$ invariant
is in general not additive.
This is in contrast
to the conjecture that the unknotting number is additive
(see \cite[Problem~1.69\,(B)]{Kir97} and see \cite{Sch85} for some strong evidence towards this conjecture).
\en
\end{remarks}

\subsection{The Blanchfield pairing and intersection pairings on 4--manifolds}

We now turn to the proof that $u_a(K)\geq n(K)$. We will show that the 0--framed surgery on a knot which can be turned into an Alexander polynomial one knot using $u_+$ positive and $u_-$ negative crossing changes
cobounds a 4--manifold with certain properties. We will then show that a matrix
representing the equivariant intersection pairing on that 4--manifold gives in fact a presentation matrix for the Blanchfield pairing of $K$.

Given a knot $K\subset S^3$ we denote in the following by $M(K)$ the 0--framed surgery on $K$.
Furthermore, given a topological 4--manifold $W$ with boundary $M$,  we consider the following sequence of maps
\[ H_2(W;\Z)\xrightarrow{\i} H_2(W,M;\Z)\xrightarrow{\PD} H^2(W;\Z)\xrightarrow{\ev} \hom_\Z(H_2(W;\Z)),\]
where $\i$ denotes the inclusion induced map, $\PD$ denotes Poincar\'e duality and $\ev$ denotes the evaluation map.
This defines a pairing \[ H_2(W;\Z)\times H_2(W;\Z)\to \Z,\] called the \emph{ordinary intersection pairing of $W$}, which is well--known to be symmetric.
In the following we will several times make implicit use of the following lemma.

\begin{lemma}
Suppose the following hold:
\bn
\item $M$ is connected,
\item  $H_1(M;\Z)\to H_1(W;\Z)$ is an isomorphism,
\item $H_1(W;\Z)$ is torsion--free,
\en
then the ordinary intersection pairing is non--singular.
\end{lemma}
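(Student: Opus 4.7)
The plan is to verify that each of the three arrows in the composition
\[ H_2(W;\Z)\xrightarrow{\i} H_2(W,M;\Z)\xrightarrow{\PD} H^2(W;\Z)\xrightarrow{\ev} \hom_\Z(H_2(W;\Z),\Z) \]
defining the intersection pairing is itself an isomorphism. The map $\PD$ is Poincar\'e--Lefschetz duality for the compact oriented topological 4--manifold $W$ with boundary $M$, so it is an isomorphism for free. For the evaluation map $\ev$, the universal coefficient theorem produces the short exact sequence
\[ 0\to \Ext^1_\Z(H_1(W;\Z),\Z)\to H^2(W;\Z)\xrightarrow{\ev}\hom_\Z(H_2(W;\Z),\Z)\to 0, \]
and the $\Ext$ term vanishes because $H_1(W;\Z)$ is torsion--free by hypothesis. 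So the whole argument reduces to showing that the inclusion--induced map $\i$ is an isomorphism.

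Surjectivity of $\i$ is immediate from the long exact sequence of the pair $(W,M)$,
\[ H_2(W;\Z)\xrightarrow{\i} H_2(W,M;\Z)\to H_1(M;\Z)\to H_1(W;\Z), \]
together with the hypothesis that the last arrow is an isomorphism. For injectivity I would extend the same sequence one step further to the left, which reduces the task to showing that the connecting homomorphism $\partial\colon H_3(W,M;\Z)\to H_2(M;\Z)$ is surjective. Here I would dualize: Poincar\'e--Lefschetz for $(W,M)$ identifies $H_3(W,M;\Z)\cong H^1(W;\Z)$, and Poincar\'e duality for the closed oriented 3--manifold $M$ identifies $H_2(M;\Z)\cong H^1(M;\Z)$. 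Under these identifications $\partial$ corresponds to the restriction map $H^1(W;\Z)\to H^1(M;\Z)$ from the cohomology long exact sequence of $(W,M)$. Since $M$ and $W$ are connected, UCT identifies both cohomology groups with $\hom(-,\Z)$ of their respective $H_1$'s (the $\Ext^1$ terms vanish because $H_0$ is free), and the restriction map becomes the $\Z$--dual of the inclusion $H_1(M;\Z)\to H_1(W;\Z)$; by hypothesis the inclusion is an isomorphism, and hence so is its dual.

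The only step that is not completely mechanical is the compatibility of Poincar\'e--Lefschetz duality with the connecting homomorphisms of the homology and cohomology long exact sequences of $(W,M)$, which is what lets me pass between $\partial\colon H_3(W,M;\Z)\to H_2(M;\Z)$ and the restriction map $H^1(W;\Z)\to H^1(M;\Z)$. This is standard but I would cite it explicitly. Modulo that compatibility, the proof is a routine diagram chase together with UCT.
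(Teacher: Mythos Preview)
Your proof is correct and follows essentially the same route as the paper's: both show that $\PD$ and $\ev$ are isomorphisms (the latter via UCT and torsion-freeness of $H_1(W;\Z)$), and both reduce injectivity of $\iota$ to the connecting map $H_3(W,M;\Z)\to H_2(M;\Z)$ being surjective, which is established by dualizing via Poincar\'e--Lefschetz and invoking the hypothesis on $H_1$. The only cosmetic difference is that the paper dualizes through $H^2(M)\to H^3(W,M)$ and then uses torsion-freeness of $H_2(M)$ and $H_3(W,M)$ to descend back to homology, whereas you dualize directly through $H^1(W)\to H^1(M)$; your route is marginally cleaner since the Ext terms in UCT vanish for free (they involve $H_0$), but the arguments are otherwise identical.
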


\begin{proof}
The assumption that $H_1(M;\Z)\to H_1(W;\Z)$ is an isomorphism implies by Poincar\'e duality that
$H^2(M;\Z)\to H^3(W,M;\Z)$ is an isomorphism. From the universal coefficient theorem
it follows that $\hom_\Z(H_2(M;\Z),\Z)\to \hom_\Z(H_3(W,M;\Z),\Z)$ is an isomorphism. But $H_2(M;\Z)\cong H^1(M;\Z)$
and $H_3(W,M;\Z)\cong H^1(W;\Z)\cong \hom(H_1(W;\Z),\Z)$ are torsion--free,
it thus follows
 that $H_3(W,M;\Z)\to H_2(M;\Z)$ is an isomorphism.
It follows from the long exact sequence of the pair $(W,M)$ that the map $\i\colon H_2(W;\Z)\to H_2(W,M;\Z)$ is an isomorphism.
The assumption that $H_1(W;\Z)$ is torsion--free implies by the universal coefficient theorem that the evaluation map $\ev\colon H^2(W;\Z)\to \hom_\Z(H_2(W;\Z),\Z)$ is an isomorphism.
It now follows that the ordinary intersection pairing is non--singular.
\end{proof}

We now consider a topological 4--manifold $W$ with boundary $M$ such that $\pi_1(W)=\Z$.
We then  consider the following sequence of maps
\be \label{equ:lambdaw} H_2(W;\L)\xrightarrow{\i} H_2(W,M;\L)\xrightarrow{\PD} \ol{H^2(W;\L)}\xrightarrow{\ev} \ol{\hom_{\L}(H_2(W;\L),\L)},\ee
where the first map is again the inclusion induced map, the second map is Poincar\'e duality and the third map is the evaluation map. This composition of maps
defines a  pairing
\[ H_2(W;\L)\times H_2(W;\L)\to \L,\]
which is well-known to be hermitian. We refer to this pairing as the \emph{twisted intersection pairing} on $W$. Now we shall introduce
a following notion, which we shall use several times in the future.

\begin{defn}\label{def:tamelycobounds}
Let $K$ be a knot and $M(K)$ the zero framed surgery on $K$. We shall say that a four manifold $W$ \emph{tamely cobounds $M(K)$} if the
following conditions are satisfied:
\bn
\item $\partial W=M(K)$,
\item the inclusion induced map $H_1(M(K);\Z)\to H_1(W;\Z)$ is an isomorphism,
\item $\pi_1(W)=\Z$,
\en
If furthermore the intersection form on $H_2(W;\Z)$ is diagonalizable, we say that $W$ \emph{strictly cobounds $M(K)$}.
\end{defn}

The following theorem will be the key ingredient in the proof that $u_a(K) \geq n(K)$.

\begin{theorem}\label{thm:w1}
Let $K$ be a knot. Suppose there exists a topological 4--manifold $W$, which tamely cobounds $M(K)$.
Then $H_2(W;\L)$ is free of rank $b_2(W)$. Furthermore, if $B$ is an integral matrix representing the ordinary intersection pairing of $W$,
then there exists a basis $\BB$ for $H_2(W;\L)$ such that the matrix $A(t)$ representing the twisted intersection pairing  with respect
to $\BB$ has the following two properties:
\bn
\item $\l(A(t))\cong \l(K)$,
\item $A(1)=B$.
\en
\end{theorem}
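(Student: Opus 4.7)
The strategy is to compute the $\L$-homology of $W$ and $M=M(K)$, extract from the long exact sequence of $(W,M)$ a short exact sequence presenting the Alexander module by the twisted intersection form, show $H_2(W;\L)$ is free, and adjust the basis so that $A(1)=B$.

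For the preliminary computations: $\pi_1(W)=\Z$ makes the infinite cyclic cover of $W$ simply connected, so $H_1(W;\L)=0$ and $H_0(W;\L)=\Z$. Standard calculations for the zero-surgery give $H_0(M;\L)=\Z$, $H_1(M;\L)=H_1(X(K);\L)$ ($\L$-torsion), $H_2(M;\L)\cong\Z$, and $H_3(M;\L)=0$. The UCSS applied to $W$ (with $H_1(W;\L)=0$) gives $H^2(W;\L)\cong\hom_\L(H_2(W;\L),\L)$ and $H^1(W;\L)\cong\Ext^1_\L(\Z,\L)=\Z$, whence by Poincar\'e duality $H_3(W,M;\L)\cong\Z$ and $H_2(W,M;\L)\cong\ol{\hom_\L(H_2(W;\L),\L)}$. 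The long exact sequence of $(W,M)$ then reads
\[
\Z=H_3(W,M;\L)\xrightarrow{\partial}H_2(M;\L)=\Z \to H_2(W;\L) \xrightarrow{\iota} \ol{\hom_\L(H_2(W;\L),\L)} \to H_1(X(K);\L) \to 0,
\]
and comparison with the $\Z$-coefficient LES (where the tameness hypothesis forces non-degeneracy of the ordinary intersection form, hence $H_2(M;\Z)\to H_2(W;\Z)$ vanishes) shows $\partial$ is an iso of $\Z$'s, hence of $\L$-modules. The map $\iota$ is by construction the twisted intersection pairing of \eqref{equ:lambdaw}, so we obtain
\[
0 \to H_2(W;\L) \xrightarrow{\iota} \ol{\hom_\L(H_2(W;\L),\L)} \to H_1(X(K);\L) \to 0.
\]

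To upgrade the torsion-freeness of $H_2(W;\L)$ (immediate from the injection above) to freeness, I use the Tor spectral sequence $\Tor^\L_p(H_q(W;\L),\Z)\Rightarrow H_{p+q}(W;\Z)$. Since $H_1(W;\L)=0$ and $\Tor^\L_{\ge 2}(\Z,\Z)=0$ (via $0\to\L\xrightarrow{t-1}\L\to\Z\to 0$), it degenerates to $H_2(W;\Z)\cong H_2(W;\L)\otimes_\L\Z$. The tameness hypothesis makes $H_2(W;\Z)$ free abelian of rank $b_2(W)$ (its torsion would sit in $H_1(W;\Z)_{\mathrm{tors}}=0$ or, via UCT and PD, in the vanishing $H_1(W,M;\Z)$). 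Combining torsion-freeness of $H_2(W;\L)$ with its free reduction mod $(t-1)$ and invoking the fact that finitely generated projective $\zt$-modules are free (Bass--Suslin) yields $H_2(W;\L)\cong\L^{b_2(W)}$.

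Fix any $\L$-basis of $H_2(W;\L)$, let $A_0(t)$ be the resulting matrix of $\iota$, and set $B_0=A_0(1)$. The isomorphism $H_2(W;\L)/(t-1)H_2(W;\L)\cong H_2(W;\Z)$ makes $B_0$ a Gram matrix of the ordinary intersection form, so there exists $P\in\gl_{b_2(W)}(\Z)$ with $P^tB_0P=B$. Viewing $P$ as a constant element of $\gl_{b_2(W)}(\L)$ and changing the $\L$-basis by $P$ produces the sought basis $\BB$; in $\BB$, $\iota$ has matrix $A(t)=P^tA_0(t)P$ satisfying $A(1)=B$. Finally, $\l(A(t))\cong\l(K)$ follows from a Poincar\'e--Lefschetz duality diagram chase identifying the natural pairing $(v,w)\mapsto\ol{v}^tA(t)^{-1}w$ on $\mathrm{coker}(\iota)=H_1(X(K);\L)$ with the composition $\Phi$ of \eqref{equ:defbl} defining $\l(K)$, via the Bockstein sequence of $0\to\L\to\O\to\O/\L\to 0$. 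The main obstacle is the freeness of $H_2(W;\L)$: torsion-freeness is cheap, but full $\L$-freeness requires Bass--Suslin or a geometric handle argument; a secondary subtlety is the diagram chase certifying that the residue pairing on the cokernel is genuinely $\l(K)$ rather than some twisted variant.
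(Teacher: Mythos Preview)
Your overall architecture matches the paper's: set up the short exact sequence from the pair $(W,M)$, prove $H_2(W;\L)$ is free, adjust the basis so $A(1)=B$, and identify the induced pairing on the cokernel with $\lambda(K)$. The last three steps are handled essentially as in the paper (which also does the basis change via an integral $P$ and runs the same diagram chase through $H_2(W,M;\L)$ and the Bockstein).

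The gap is in the freeness step, which you rightly flag as the main obstacle. Your argument is: $H_2(W;\L)$ is torsion-free (it injects into a dual), its reduction $H_2(W;\L)\otimes_\L\Z\cong H_2(W;\Z)$ is free abelian, hence by Bass--Suslin it is free. But Bass--Suslin only upgrades \emph{projective} $\zt$-modules to free ones; you have not shown projectivity, and the implication ``torsion-free and free mod $(t-1)$ $\Rightarrow$ projective'' is false over $\zt$. A concrete counterexample is the ideal $I=(2,\,t^2+t+1)\subset\L$: it is torsion-free (being an ideal in a domain), the resolution $0\to\L\xrightarrow{(t^2+t+1,\,-2)}\L^2\to I\to 0$ gives $I\otimes_\L\Z\cong\Z^2/\Z\cdot(3,-2)\cong\Z$, yet $I$ is not principal (since $2$ and $t^2+t+1$ are coprime irreducibles and $\L/I\cong\F_4\neq 0$), hence not projective.

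The paper closes this gap differently: it runs the UCSS for $H^*(W;\L)$ and observes that $\ext^1_\L(H_2(W;\L),\L)$ and $\ext^2_\L(H_2(W;\L),\L)$ inject into $H^3(W;\L)\cong\ol{H_1(W,M;\L)}$ and $H^4(W;\L)\cong\ol{H_0(W,M;\L)}$ respectively, both of which vanish because $H_1(M;\Z)\to H_1(W;\Z)$ is an isomorphism. Over the two-dimensional regular ring $\L$, the vanishing of $\ext^i(-,\L)$ for $i=1,2$ forces projective dimension zero, hence freeness (this is the content of \cite[Corollary~3.7]{Ka86}). Plugging this argument in place of your Bass--Suslin step, the rest of your proof goes through.
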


The proof of Theorem \ref{thm:w1} is rather long and will require all of the following section.

\subsection{Proof of Theorem \ref{thm:w1}}

Let $K$ be a knot and let $W$ be a topological 4--manifold $W$, which tamely cobounds $M(K)$.
Throughout this section we write  $M:=M(K)$.
We first want to prove the following lemma:

\begin{lemma}
The $\L$--module $H_2(W;\L)$ is free of rank $b_2(W)$.
\end{lemma}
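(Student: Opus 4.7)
The plan is to show $H_2(W;\L)$ is a finitely generated projective $\L$-module by proving $\ext^i_\L(H_2(W;\L), \L) = 0$ for all $i \geq 1$, then upgrade projectivity to freeness via the Quillen--Suslin theorem for $\L = \zt$, and finally identify the rank with $b_2(W)$ via an Euler characteristic computation over $\O = \Q(t)$. The main tools will be Poincar\'e--Lefschetz duality, the long exact sequence of the pair $(W, M)$, and the universal coefficient spectral sequence recalled in Section~\ref{section:basics}.

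First I would gather the low-degree $\L$-homology of $M := M(K)$ and $W$. Since $\pi_1(W) = \Z$, the infinite cyclic cover $\widehat W$ is the universal cover and is simply connected, hence $H_0(W;\L) = \Z$ and $H_1(W;\L) = 0$. For $M$, a standard computation gives $H_0(M;\L) = \Z$ and shows $H_1(M;\L)$ is $\L$-torsion (essentially the Alexander module); Poincar\'e duality combined with the UCSS then yields $H_2(M;\L) = \Z$ and $H_3(M;\L) = 0$. The tameness hypothesis that $H_1(M;\Z) \to H_1(W;\Z)$ is an isomorphism makes $H_0(M;\L) \to H_0(W;\L)$ an isomorphism, so the long exact sequence of $(W,M)$ with $\L$-coefficients forces $H_0(W,M;\L) = 0 = H_1(W,M;\L)$. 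The UCSS then computes $H^1(W,M;\L) = 0$ (both $\hom_\L(H_1(W,M;\L),\L)$ and $\ext^1_\L(H_0(W,M;\L),\L)$ vanish), so by Poincar\'e--Lefschetz duality $H_3(W;\L) \cong \ol{H^1(W,M;\L)} = 0$; dually $H^3(W;\L) \cong \ol{H_1(W,M;\L)} = 0$, $H^4(W;\L) \cong \ol{H_0(W,M;\L)} = 0$, and $H_4(W;\L) \cong \ol{H^0(W,M;\L)} = 0$.

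With these vanishings in hand, I would run the UCSS for $H^3(W;\L)$ and $H^4(W;\L)$ to extract the desired $\ext$-vanishings. The cohomological dimension bounds from Section~\ref{section:basics} ($\ext^p(\Z, \L) = 0$ for $p \geq 2$ and $\ext^p = 0$ for $p \geq 3$), together with $H_1(W;\L) = H_3(W;\L) = H_4(W;\L) = 0$, reduce the $p+q=3$ line of the UCSS to the single entry $E_2^{1,2} = \ext^1_\L(H_2(W;\L), \L)$ and the $p+q=4$ line to the single entry $E_2^{2,2} = \ext^2_\L(H_2(W;\L), \L)$, with all potentially non-trivial differentials automatically zero for bidegree reasons. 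Hence $\ext^1_\L(H_2(W;\L), \L) = 0 = \ext^2_\L(H_2(W;\L), \L)$, so $H_2(W;\L)$ has projective dimension $0$; it is projective, and by Quillen--Suslin for $\L = \zt$ it is free of some finite rank $r$.

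Finally, to identify $r$ with $b_2(W)$, I would base-change to $\O = \Q(t)$. Since each $H_i(M;\L)$ is $\L$-torsion, $H_*(M;\O) = 0$, so the $\O$-coefficient LES gives $H_i(W;\O) \cong H_i(W,M;\O)$; duality together with the earlier vanishings then forces $H_i(W;\O) = 0$ for $i \neq 2$. Hence $r = \dim_\O H_2(W;\O) = \chi(W) = b_2(W)$, where the final equality uses $b_4(W) = 0$ (non-empty boundary), $b_0(W) = b_1(W) = 1$ (by tameness), and $b_3(W) = 0$ (Poincar\'e--Lefschetz over $\Q$). The main obstacle I anticipate is not in any single computation but in the bookkeeping, simultaneously tracking the LES of $(W,M)$, Poincar\'e--Lefschetz duality, and the UCSS, while checking that no hidden spectral sequence differentials intervene; fortunately the cohomological dimension bounds on $\L$ make this verification mechanical.
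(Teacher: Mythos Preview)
Your proof is correct and follows essentially the same strategy as the paper: compute the low-degree $\L$-homology of $W$ and $(W,M)$ from the hypotheses, run the UCSS to show $\ext^i_\L(H_2(W;\L),\L)=0$ for $i=1,2$, deduce freeness, and identify the rank with $b_2(W)$ via an Euler-characteristic argument over $\O$. The only cosmetic difference is that where you pass from the $\ext$-vanishing to projectivity (a standard fact for finitely generated modules over the regular two-dimensional ring $\L$) and then invoke Quillen--Suslin, the paper instead cites \cite[Corollary~3.7]{Ka86} directly for freeness.
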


\begin{proof}
We first want to show that $H_2(W;\L)$ is a free $\L$-module.
Note that $H_2(W;\L)$ is a finitely generated $\L$-module since $\L$ is Noetherian.
By \cite[Corollary~3.7]{Ka86} the module $H_2(W;\L)$ is free if and only if
$\ext_{\L}^i(H_2(W;\L),\L)=0$ for $i=1,2$.

Note that $\pi_1(W)\cong \Z$ implies that $H_1(W;\L)=0$.
We also have $H_4(W;\L)=0$. We furthermore have
an isomorphism $H_0(M;\L)\to H_0(W;\L)$. We thus conclude from the long exact homology sequence corresponding to the
pair $(W,M)$ that
$H_0(W,M;\L)=0$ and $H_1(W,M;\L)=0$.

Recall that the UCSS (see Section \ref{section:basics}) starts with  $E_{p,q}^2=\ext^p_{\L}(H_q(W;\L),\L)$ and converges
to $H^*(W;\L)$. Furthermore the differentials have degree $(1-r,r)$.
By the above we have $E_{p,q}^2=0$ for $q=1$ and $q=4$. Since $\L$ has cohomological dimension 2 we also
have $E_{p,q}^2=0$ for $p\geq 3$. Finally note that
\[ E_{2,0}^2=\ext^2_{\L}(H_0(W;\L),\L)=\ext^2_{\L}(\L/(t-1)\L,\L)=0.\]
It now follows from the UCSS that we have a monomorphism
\[  E_{1,2}^2=\ext^1_{\L}(H_2(W;\L),\L)\to H^3(W;\L).\]
But $H^3(W;\L)\cong \ol{H_1(W,M;\L)}=0$.
Similarly, it follows  from the UCSS that we have a monomorphism
\[ E^2_{2,2}=\ext^2_{\L}(H_2(W;\L),\L) \to H^4(W;\L).\]
But $H^4(W;\L)\cong \ol{H_0(W,M;\L)}=0$. This concludes the proof of the claim that $H_2(W;\L)$ is a free module.

We now turn to the proof that $H_2(W;\L)$ is a free $\L$-module of rank $s:=b_2(W)$.
It remains to show that $H_2(W;\L)$ is of rank $s$. Since $\O$ is flat over $\L$, it suffices to show that
$\dim_\O(H_2(W;\O))=s$. It is clear that $H_i(W;\O)=0$ for $i=0,1,4$. Furthermore
$ H_3(W;\O)\cong \ol{ H^1(W,M;\O)}$. But since $\O$ is a field the latter is isomorphic
to $\ol{H_1(W,M;\O)}$ which is zero.
We thus calculate
\[ \dim_\O(H_2(W;\O))=\sum_{i=0}^4 (-1)^i\dim_\O(H_i(W;\O))=\chi(W).\]
Now note that $b_0(W)=b_1(W)=1$ and $b_4(W)=0$. Also note that $H^3(W;\Z)=H_1(W,M;\Z)=0$ since we assume that
$H_1(M;\Z)\to H_1(W;\Z)$ is an isomorphism.
It thus follows that $b_3(W)=0$, and we see that $\chi(W)=b_2(W)=s$.
This concludes the proof of the lemma.
\end{proof}

We now write $s=b_2(W)$. We pick a basis $\BB$ for $H_2(W;\L)$ and denote by $A=A(t)$ the corresponding $s\times s$--matrix representing the twisted intersection pairing.
 Note that $A$ is a hermitian $s\times s$-matrix.
By the argument of \cite[Lemma~2.2]{FHMT07} we see that the matrix $A(1)$ represents the ordinary intersection pairing on $H_2(W;\Z)$.
In particular there exists an integral matrix $P$ such that  $PA(1)P^t=B$. After acting on the basis $\BB$ by the matrix $P$
we can without loss of generality assume that in fact $A(1)=B$.
The following lemma now concludes the proof of Theorem \ref{thm:w1}.

\begin{lemma}\label{lem:lambdaak}
The pairing $\lambda(A)$ is isometric to $\lambda(K)$.
\end{lemma}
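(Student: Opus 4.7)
The plan is to identify $H_1(M;\L)$ with the cokernel of $A$ acting on $\L^s$, and then to show that under this identification the Blanchfield pairing of $K$ agrees with the algebraic pairing $\lambda(A)$. First I would extend the long exact sequence of the pair $(W,M)$ with $\L$-coefficients to obtain
\[ H_2(W;\L) \xrightarrow{i_*} H_2(W,M;\L) \xrightarrow{\partial} H_1(M;\L) \to H_1(W;\L)=0. \]
By Poincar\'e--Lefschetz duality, $H_2(W,M;\L)\cong \overline{H^2(W;\L)}$, and the UCSS together with the vanishing computations carried out in the previous subsection (freeness of $H_2(W;\L)$, $H_1(W;\L)=0$, $\Ext^2_\L(H_0(W;\L),\L)=0$) identifies $H^2(W;\L)$ with $\Hom_\L(H_2(W;\L),\L)$. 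With respect to the basis $\BB$ and its dual basis, the composite $i_*$ is then the adjoint of the twisted intersection form, i.e.\ the map $\L^s\xrightarrow{A}\L^s$. Hence $H_1(M;\L)\cong \L^s/A\L^s$, and this identification is natural with respect to Poincar\'e duality on $W$.

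Next, I would invoke the standard fact that for $M = M(K)$, the inclusion $X(K) \hookrightarrow M(K)$ induces an isomorphism on $H_1(-;\L)$ compatible with the Blanchfield pairings. This reduces the lemma to showing that $\lambda(K)$, viewed as a pairing on $H_1(M;\L)\cong \coker(A)$, is given in coordinates by $(a,b)\mapsto \overline{a}^t A^{-1} b$. The key computational point is that since $H_1(M;\L)$ is $\L$-torsion and $\O=\Q(t)$ is flat over $\L$, the localization $H_1(M;\O)$ vanishes, so any lift $\beta \in H_2(W,M;\L)$ of $\bar b$ becomes $i_*(\tilde\gamma)$ for some $\tilde\gamma\in H_2(W;\O) = H_2(W;\L)\otimes_\L \O$. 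In our bases $\tilde\gamma = A^{-1}b\in\O^s$. The Blanchfield pairing value $\lambda(K)(\bar a,\bar b)$ is then computed as an $\O$-valued intersection $\langle \alpha,\tilde\gamma\rangle_W\in\O$ reduced modulo $\L$, which reads $\overline{a}^t A^{-1}b\pmod\L$ in coordinates.

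To make the previous step rigorous, the main obstacle is assembling a commutative diagram that relates the definition of $\lambda(K)$ on $\partial W = M$ (via Poincar\'e duality and the Bockstein coming from $0\to\L\to\O\to\O/\L\to 0$) to the twisted intersection pairing on $W$ (via Poincar\'e--Lefschetz duality for $(W,M)$). Concretely, I would set up a diagram with three rows coming from the long exact sequences of $(W,M)$ in $\L$-, $\O$-, and $\O/\L$-coefficients, and three columns obtained by applying Poincar\'e duality on $W$ and $M$ and the evaluation map into dual modules. The naturality of Poincar\'e duality under inclusion of the boundary, together with the vanishing of higher Ext terms over $\L$ (used in the previous subsection to extract $\Hom$ from the UCSS), then forces the composite identifying $\lambda(K)$ to coincide with the formula $\overline{a}^tA^{-1}b$. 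This diagram chase, rather than the algebraic identification of $H_1(M;\L)$ with $\coker(A)$, is the technical heart of the lemma; it appears in essentially this form in earlier work relating Blanchfield pairings to intersection forms of bounding 4--manifolds, and I would cite or adapt such a reference to complete the proof.
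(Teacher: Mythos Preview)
Your proposal is correct and follows essentially the same route as the paper. Both arguments identify $H_2(W,M;\L)$ with $\Hom_\L(H_2(W;\L),\L)\cong\L^s$ via Poincar\'e duality and the UCSS, observe that in the bases $\BB$ and its dual the inclusion $i_*$ is given by the matrix $A$, and then conclude by setting up a commutative diagram linking the twisted intersection pairing on $W$ to the Blanchfield pairing on $M$. The paper's organization of that diagram is slightly different from yours: rather than three rows in $\L$-, $\O$-, $\O/\L$-coefficients, it introduces an auxiliary $\O$-valued pairing on $H_2(W,M;\L)$ as the middle tier, and for the commutativity of the lower square (relating this to the Blanchfield pairing on $M$) it simply cites \cite{Lei06}; your instinct to appeal to an existing reference for exactly this step is on the mark.
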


The proof of the lemma will require the remainder of this section.
We first want to prove the following claim:

\begin{claim}
The following is a short exact sequence:
\be \label{equ:ses} 0\to H_2(W;\L)\to H_2(W,M;\L)\to H_1(M;\L)\to 0.\ee
\end{claim}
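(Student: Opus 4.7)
The plan is to extract the claimed three-term sequence from the long exact homology sequence of the pair $(W,M)$ with $\L$-coefficients,
\[
H_2(M;\L) \xrightarrow{i_*} H_2(W;\L) \xrightarrow{j_*} H_2(W,M;\L) \xrightarrow{\partial} H_1(M;\L) \xrightarrow{i_*} H_1(W;\L),
\]
and to show that the two outer maps are zero. The right-hand vanishing is immediate from $\pi_1(W)=\Z$: the infinite cyclic cover $\widehat W$ is simply connected, so $H_1(W;\L)=H_1(\widehat W;\Z)=0$ and hence $\partial$ is surjective.

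For the left-hand vanishing I would invoke the preceding lemma, which identifies $H_2(W;\L)$ as a free $\L$-module. Because every $\L$-linear map from an $\L$-torsion module into a free $\L$-module is zero, it suffices to prove that $H_2(M;\L)$ is $\L$-torsion. I would check this by tensoring with the quotient field $\O$; flatness of $\O$ over $\L$ reduces the problem to showing $H_2(M;\O)=0$. Three of the four $\O$-Betti numbers of $M$ vanish tautologically: $H_0(M;\O)=\O/(t-1)\O=0$ since $t-1$ is a unit in $\O$; $H_3(M;\O)=0$ because the infinite cyclic cover of $M$ is non-compact; and $H_1(M;\O)=0$ since $H_1(M;\L)$ is annihilated by the Alexander polynomial $\Delta_K(t)$, which becomes a unit in $\O$. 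Combined with $\chi(M)=0$, these force $H_2(M;\O)=0$, as required.

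The only input beyond formal bookkeeping is the classical fact that $\Delta_K(t)$ annihilates $H_1(M;\L)$; this is easy, since $H_1(M;\L)$ is a quotient of the Alexander module $H_1(X(K);\L)$ (attach the solid torus along the null-homologous longitude), and the latter is annihilated by $\Delta_K(t)$ by definition. No genuine obstacle is anticipated: once the freeness provided by the preceding lemma is in hand, the short exact sequence is essentially a formal consequence of the long exact sequence and the assumption $\pi_1(W)=\Z$.
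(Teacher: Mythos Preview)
Your proof is correct and follows essentially the same approach as the paper: extract the long exact sequence of the pair $(W,M)$, use $H_1(W;\L)=0$ from $\pi_1(W)=\Z$ for surjectivity on the right, and use freeness of $H_2(W;\L)$ together with $H_2(M;\L)$ being $\L$-torsion for injectivity on the left. The only minor difference is in verifying $H_2(M;\O)=0$: the paper uses Poincar\'e duality $H_2(M;\O)\cong \ol{H^1(M;\O)}\cong \ol{\hom_\O(H_1(M;\O),\O)}=0$, whereas you use an Euler characteristic count; both are equally valid and rest on the same input that $H_1(M;\L)$ is $\L$-torsion.
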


\begin{proof}
To prove the claim we first consider the following  exact sequence
\[ H_2(M;\L)\to H_2(W;\L)\to H_2(W,M;\L)\to H_1(M;\L)\to H_1(W;\L)\to \dots\]
Recall  that $H_1(W;\L)=0$. Also note that
\[ H_2(M;\L)\otimes_{\L}\O \cong H_2(M;\O)\cong \ol{H^1(M;\O)}\cong \ol{\hom_{\O}(H_1(M;\O),\O)}=0\]
since $H_1(M;\O)=H_1(M;\L)\otimes_{\L}\O=0$ (here we used that $H_1(M;\L)$ is torsion).
In particular $H_2(M;\L)$ is torsion and the map $ H_2(M;\L)\to H_2(W;\L)$ is trivial since $H_2(W;\L)$ is a free $\L$-module.
This now concludes the proof of the claim.
\end{proof}

We now define a Blanchfield pairing on $H_1(M;\L)$ and an intersection pairing on $H_2(W,M;\L)$.
First of all, similar to (\ref{equ:defbl}) we can consider the following sequence of isomorphisms:
\[H_1(M;\L)\xrightarrow{\PD} \ol{H^2(M;\L)}\xleftarrow{\cong} \ol{H^1(M;\O/\L)}
\xrightarrow{\ev} \hom_{\L}(H_1(M;\L),\O/\L).\]
This defines a hermitian non-singular pairing
\be\label{equ:blanonM}
 H_1(M;\L)\times H_1(M;\L)\to \O/\L.
\ee
It is well-known that the natural map $H_1(X(K);\L)\to H_1(M;\L)$ is an isomorphism,
and it follows immediately that the Blanchfield pairing
on $X(K)$ is isometric to the pairing \eqref{equ:blanonM} on $M$.

Secondly, we consider the following sequence of maps
\be \label{equ:wm}
\ba{rcl} H_2(W,M;\L)&\xrightarrow{\PD}& \ol{H^2(W;\L)}\to \ol{H^2(W;\O)}\cong \ol{H^2(W,M;\O)}\\
&\xrightarrow{\ev}& \ol{\hom(H_2(W,M;\L),\O)}.\ea\ee
Here, for the third map we made use of the fact that $H_1(M;\L)$ is $\L$-torsion, therefore \eqref{equ:ses}
implies that the inclusion induced map
$ H^2(W,M;\O)\to H^2(W;\O)$
is an isomorphism. The other maps in (\ref{equ:wm}) are given by Poincar\'e duality, inclusion of rings and the evaluation homomorphism.
The sequence of maps in (\ref{equ:wm}) now defines a hermitian pairing
\[ H_2(W,M;\L)\times H_2(W,M;\L)\to \O.\]

\begin{claim}
The intersection pairing on $W$, the intersection pairing on $H_2(W,M;\L)$
and the Blanchfield pairing on $M$ fit into the following commutative diagram, where the left vertical maps form a short exact sequence:
\be \label{equ:forms} \xymatrix{ H_2(W;\L)\times H_2(W;\L) \ar[d]\ar[r] & \L\ar[d] \\
H_2(W,M;\L)\times H_2(W,M;\L) \ar[d]\ar[r] & \O\ar[d] \\
H_1(M;\L)\times H_1(M;\L) \ar[r] & \O/\L.}\ee
\end{claim}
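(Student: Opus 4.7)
The statement packages three separate assertions that can be addressed independently: the short exactness of the left column, the commutativity of the top square, and the commutativity of the bottom square. Short exactness of
\[ 0\to H_2(W;\L)\to H_2(W,M;\L)\to H_1(M;\L)\to 0 \]
is precisely the content of \eqref{equ:ses}, established in the preceding claim, so nothing further is required for the left column.

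For the top square, the $\L$-valued pairing sends $(x,y)\in H_2(W;\L)\times H_2(W;\L)$ to $\ev(\PD(\i(x)))(y)\in\L$, while the $\O$-valued pairing sends $(a,b)\in H_2(W,M;\L)\times H_2(W,M;\L)$ to $\ev(\PD(a))(b)\in\O$, with $\PD(a)\in \ol{H^2(W;\L)}$ first pushed into $\ol{H^2(W,M;\O)}$ via extension of scalars and the isomorphism $H^2(W;\O)\cong H^2(W,M;\O)$. When $a=\i(x)$ and $b=\i(y)$, the two outputs differ only by the inclusion $\L\hookrightarrow\O$: this is an immediate consequence of the naturality of the evaluation map with respect to (i) the inclusion $H_2(W;\L)\hookrightarrow H_2(W,M;\L)$ and (ii) the coefficient inclusion $\L\hookrightarrow\O$. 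I would simply write out both sides and check the equality.

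The bottom square is the essential content of the claim, and amounts to the classical realization of the Blanchfield pairing by the equivariant intersection form of a bounding four-manifold. Given $a,b\in H_2(W,M;\L)$ with boundary images $\partial a,\partial b\in H_1(M;\L)$, the plan is to chase $(a,b)$ through both routes and show that they land at the same element of $\O/\L$. The key input is the naturality of Poincar\'e--Lefschetz duality with respect to the boundary map, which yields a commutative square
\[ \xymatrix{ H_2(W,M;\L)\ar[r]^-{\partial}\ar[d]_-{\PD_W} & H_1(M;\L)\ar[d]^-{\PD_M} \\
\ol{H^2(W;\L)}\ar[r]^-{r^*} & \ol{H^2(M;\L)} } \]
where $r\colon M\hookrightarrow W$ denotes the inclusion. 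This is combined with two further ingredients. First, the Bockstein-type isomorphism $\delta\colon \ol{H^1(M;\O/\L)}\xrightarrow{\cong} \ol{H^2(M;\L)}$ that appears in the definition of $\l(K)$ arises from the coefficient sequence $0\to\L\to\O\to\O/\L\to 0$ together with the vanishings $H^1(M;\O)=H^2(M;\O)=0$, which in turn follow from the fact that $H_*(M;\L)$ is $\L$-torsion combined with the UCSS and the flatness of $\O$ over $\L$. Second, the same coefficient sequence applied to the pair $(W,M)$ produces a ladder of long exact cohomology sequences compatible with restriction to $M$. A diagram chase through this ladder, combined with the Poincar\'e--Lefschetz square above, identifies the reduction mod $\L$ of $\ev(\PD_W(a))(b)\in\O$ with $\ev(\delta^{-1}(\PD_M(\partial a)))(\partial b)\in\O/\L$; by construction the latter equals $\l(K)(\partial a,\partial b)$.

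The main obstacle is essentially bookkeeping: assembling three naturality squares (for the inclusion of pairs, for the coefficient sequence $0\to\L\to\O\to\O/\L\to 0$, and for Poincar\'e--Lefschetz duality under the boundary) into one commutative cube, and tracking carefully both the bar involutions and the two distinct connecting homomorphisms involved (the pair $(W,M)$ in homology and the coefficient sequence in cohomology). No new geometric or homotopy-theoretic ingredient is required beyond what is already in hand; the argument is formally parallel to the proof of Proposition~\ref{prop:blakt}, with the 4-manifold $W$ playing the role of a thickened Seifert surface.
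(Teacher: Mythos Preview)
Your proposal is correct and follows essentially the same line as the paper's proof. Both treat the short exactness by reference to \eqref{equ:ses}, both handle the top square by unwinding the definitions through $\O$-coefficients, and both reduce the bottom square to a naturality diagram linking Poincar\'e--Lefschetz duality on $(W,M)$ and on $M$ with the Bockstein $\ol{H^1(M;\O/\L)}\xrightarrow{\cong}\ol{H^2(M;\L)}$. The only real difference is that the paper outsources the commutativity of that diagram to \cite[Section~6]{Lei06}, whereas you outline how to assemble it directly from the three naturality squares; your remark that the difficulty is purely bookkeeping is accurate.
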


\begin{proof}
In the previous claim we already showed that the left vertical maps form a short exact sequence.
We now consider the following diagram
\[
\xymatrix{ H_2(W;\L)\times H_2(W;\L) \ar[d]\ar[r] & \L\ar[d] \\
H_2(W;\O)\times H_2(W;\O) \ar[d]\ar[r] & \O\ar[d] \\
H_2(W,M;\O)\times H_2(W,M;\O) \ar[r] & \O \\
H_2(W,M;\L)\times H_2(W,M;\L) \ar[u]\ar[r] & \O.\ar[u]}\]
The pairings on $\O$--homology are defined in complete analogy to the corresponding pairings on $\L$--homology,
and the vertical maps are the obvious maps. It now follows easily from the definitions
that this is a commutative diagram. Since the image of $H_2(W;\L)\to H_2(W,M;\O)$
lies in the image of $H_2(W,M;\L)\to H_2(W,M;\O)$ it now follows  that the top square in the diagram of the claim commutes.

We now consider the following diagram
\be\label{equ:c}
\xymatrix{
H_2(W,M;\L)\ar[r]\ar[d]&H_1(M;\L)\ar[d]\\
\ol{H^2(W;\L)}\ar[d]&\ol{H^2(M;\L)}\\
\hom(H_2(W,M;\L),\O)\ar[d]&\ol{H^1(M;\O/\L)}\ar[u]^{\cong}\ar[d]\\
\ol{\hom(H_2(W,M;\L),\O/\L)}&\ar[l]\ol{\hom(H_1(M;\L),\O/\L)}}\ee
where the left middle vertical map is a part of the definition of the intersection pairing on $H_2(W,M;\L)$,
furthermore the horizontal maps are the maps induced by long exact sequences corresponding to the pair $(W,M)$.
By \cite[Section~6]{Lei06} this diagram commutes. This now implies that the lower square in the claim also commutes.
\end{proof}

\begin{claim}
The evaluation map
\[ H^2(W;\L)\xrightarrow{\ev}\hom_{\L}(H_2(W;\L),\L)\]
is an isomorphism.
\end{claim}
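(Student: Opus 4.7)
The plan is to apply the universal coefficient spectral sequence (UCSS) recalled in Section~\ref{section:basics}, which starts with $E_{p,q}^2=\ext^p_{\L}(H_q(W;\L),\L)$ and converges to $H^*(W;\L)$, with differentials of bidegree $(1-r,r)$. The evaluation map $H^2(W;\L)\to\hom_{\L}(H_2(W;\L),\L)$ is precisely the edge homomorphism associated to the column $p=0$, so it will suffice to show that the spectral sequence degenerates at position $(0,2)$ in total degree~$2$.

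First I would pin down the non-zero $E^2$-terms of total degree~$2$. Since $\pi_1(W)=\Z$, one has $H_1(W;\L)=0$ and hence $E_{1,1}^2=0$. Since $H_0(W;\L)\cong\Z$ admits a $\L$-resolution of length one, $E_{2,0}^2=\ext^2_{\L}(\Z,\L)=0$. Since $H_2(W;\L)$ has just been shown to be free over $\L$, $E_{p,2}^2=0$ for $p\geq 1$, whereas $E_{0,2}^2=\hom_{\L}(H_2(W;\L),\L)$. In particular $E_{1,1}^{\infty}=E_{2,0}^{\infty}=0$, so $H^2(W;\L)$ collapses onto the single graded piece $E_{0,2}^{\infty}$.

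Next I would check which differentials can affect $E_{0,2}^r$. With bidegree $(1-r,r)$, differentials out of $E_{0,2}^r$ have target $E_{1-r,2+r}^r$, whose first index is negative for every $r\geq 2$; thus none exists. Differentials into $E_{0,2}^r$ have source $E_{r-1,2-r}^r$, whose second index is negative for $r\geq 3$. So the only candidate differential is
\[ d_2\colon E_{1,0}^2=\ext^1_{\L}(\Z,\L)\cong \L/(t-1)\L\,\longrightarrow\, E_{0,2}^2=\hom_{\L}(H_2(W;\L),\L). \]
The source is $\L$-torsion (annihilated by $t-1$), while the target, being $\hom_{\L}$ of a free $\L$-module into $\L$, is itself a free $\L$-module and therefore torsion-free. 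Hence $d_2=0$, and $E_{0,2}^{\infty}=E_{0,2}^2$.

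Combining the two steps, the edge map $H^2(W;\L)\to \hom_{\L}(H_2(W;\L),\L)$ is an isomorphism, which proves the claim. There is no serious obstacle here: the one non-routine observation is the torsion/torsion-free dichotomy that forces $d_2=0$, and every other input has already been established in the course of proving that $H_2(W;\L)$ is free.
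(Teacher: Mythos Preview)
Your proof is correct and follows essentially the same approach as the paper. Both arguments run the UCSS, observe that the only potentially non-trivial contribution to $H^2(W;\L)$ beyond $E_{0,2}^2$ comes via the differential $d_2\colon E_{1,0}^2\to E_{0,2}^2$, and kill that differential by the torsion/torsion-free dichotomy; you have simply spelled out the vanishing of $E_{1,1}^2$ and $E_{2,0}^2$ explicitly, whereas the paper refers back to the earlier discussion for these facts.
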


\begin{proof}
In order to prove the claim we have to  study the UCSS corresponding to $H^2(W;\L)$.
Note that $\ext_{\L}^1(H_0(W;\L),\L)=\L/(t-1)\L$ is $\L$-torsion, hence the differential
\[d_2\colon E^2_{1,0}= \ext_{\L}^1(H_0(W;\L),\L)\to E^2_{0,2}=\ext_{\L}^0(H_2(W;\L),\L)\]
is zero since $ \ext_{\L}^0(H_2(W;\L),\L)=\hom_{\L}(H_2(W;\L),\L)$ is $\L$-torsion free.
It now follows (using the earlier discussion) that the UCSS for $H^2(W;\L)$ gives rise to the desired isomorphism
\be \label{equ:h2} H^2(W;\L)\xrightarrow{\cong} \ext^0_{\L}(H_2(W;\L),\L)=\hom_{\L}(H_2(W;\L),\L).\ee
\end{proof}

Recall that we picked a basis $\BB$ for $H_2(W;\L)$ and that we denote by $A=A(t)$ the corresponding matrix representing the twisted intersection pairing on $H_2(W;\L)$.
Now note that by Poincar\'e duality and by the above claim we have two isomorphisms
\be \label{equ:bc} H_2(W,M;\L)\underset{\cong}{\xrightarrow{\PD}}\ol{ H^2(W;\L)}\underset{\cong}{\xrightarrow{\ev}}\ol{\hom_{\L}(H_2(W;\L),\L)}.\ee
We now  endow $H_2(W,M;\L)$ with the basis $\CC$ which is dual to $\BB$.
It follows easily from (\ref{equ:lambdaw}) and (\ref{equ:bc})  that the inclusion induced map $H_2(W;\L)\to H_2(W,M;\L)$
with respect to the bases $\BB$ and $\CC$ is given by $A$.

We now rewrite the diagram (\ref{equ:forms}) in terms of our bases, we thus obtain the following diagram
\[  \xymatrix{ \L^s \times \L^s \ar[d]^{(v,w)\mapsto (Av,Aw)}\ar[rrr]^{(v,w)\mapsto \ol{v}^tAw} &&&\L\ar[d] \\
\L^s \times \L^s \ar[rrr]^{(v,w)\mapsto \ol{v}^tA^{-1}w}\ar[d]&&&\O \ar[d]\\
H_1(M;\L)\times H_1(M;\L) \ar[rrr] &&& \O/\L.}\]
The statement of Lemma \ref{lem:lambdaak}  now follows from  this diagram and the fact that the left vertical maps form a short exact sequence.
This concludes the proof of Theorem \ref{thm:w1}.

\subsection{Proof of Theorem \ref{thm1}}

Clearly the following theorem, combined with Theorem \ref{thm:w1} implies
  Theorem \ref{thm1} from the introduction.

\begin{theorem}\label{thm:w2}
Let $K$ be a knot such that  $u_+$ positive crossing changes and $u_-$ negative crossing changes turn $K$ into an Alexander polynomial one knot $J$.
Then there exists
 an oriented topological 4--manifold $W$ which strictly cobounds $M(K)$. Moreover,
the intersection pairing on $H_2(W;\Z)$ is represented by a diagonal matrix of size $u_++u_-$
such that $u_+$ entries are equal to $-1$ and $u_-$ entries are equal to $+1$.
\end{theorem}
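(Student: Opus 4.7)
The plan is to construct $W$ as the union of two pieces glued along $M(J)$. First, since $\Delta_J(t)=1$, Freedman's theorem provides a locally flat slice disk $D\subset D^4$ bounded by $J$ whose complement $V:=D^4\setminus \nu D$ satisfies $\partial V = M(J)$, $\pi_1(V)\cong \Z$ (generated by a meridian of $D$), and $H_2(V;\Z)=0$.

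Next I would encode the sequence of crossing changes as a single surgery presentation. Since any single crossing change is realized by $\pm 1$-framed surgery on a small unknot encircling the two strands at the crossing, iterating and pulling back by the intermediate isotopies produces a disjoint collection $c_1,\dots,c_{u_++u_-}$ of unknots in $S^3\setminus K$ together with framings $\epsilon_j\in \{\pm 1\}$ such that simultaneous $\epsilon_j$-framed surgery on the $c_j$ converts $K$ into $J$. Each $c_j$ encircles two oppositely oriented strands, so $\lk(c_j,K)=0$, and since linking numbers are preserved by crossing changes also $\lk(c_j,J)=0$; hence every $c_j$ is null-homologous in $M(J)$. The corresponding 2-handle attachments produce a cobordism $X$ from $M(J)$ to $M(K)$, and setting $W:=V\cup_{M(J)}X$ gives $\partial W = M(K)$.

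To verify the remaining conditions I would view $W$ as $V$ with $u_++u_-$ two-handles attached along the $c_j\subset \partial V$. Because each $c_j$ is null-homologous in $V$ and $\pi_1(V)\cong\Z$ is abelian, null-homologous is the same as null-homotopic; so every $c_j$ is null-homotopic in $V$. Attaching a 2-handle along a null-homotopic circle is, up to homotopy, wedging with an $S^2$, so $\pi_1(W)=\pi_1(V)=\Z$ and $H_2(W;\Z)=H_2(V;\Z)\oplus\Z^{u_++u_-}=\Z^{u_++u_-}$, with a basis given by the cores of the 2-handles capped off by null-homologies in $V$. Since the $c_j$ are pairwise disjoint, the null-homologies can be chosen disjoint too, so the off-diagonal intersection numbers vanish; each self-intersection equals the framing $\epsilon_j$.

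The main remaining task, and I expect the most delicate bookkeeping step, is to match the signs: one must verify via an explicit local model of $\pm 1$-surgery on a crossing change circle that positive crossing changes give $\epsilon_j=-1$ and negative ones $\epsilon_j=+1$. Granting this sign identification, the intersection form on $H_2(W;\Z)$ is diagonal with exactly $u_+$ entries equal to $-1$ and $u_-$ entries equal to $+1$, as required.
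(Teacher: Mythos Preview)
Your plan is correct and follows the same route as the paper: take the Freedman slice disk complement $V$ for $J$ with $\pi_1(V)\cong\Z$ and $H_2(V;\Z)=0$, realise the crossing changes as $\pm 1$--surgeries on an unlink of null-homologous circles, and attach the corresponding 2--handles.

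There is one place where your write-up is looser than the paper and where your anticipated ``sign bookkeeping'' hides a second issue you do not name. You define the surgery curves $c_j$ in $S^3\setminus K$ with framings $\epsilon_j$ (so that surgery turns $K$ into $J$), but then attach 2--handles to $\partial V=M(J)$ ``along the $c_j$''. Those are not the same curves: to build $W$ from $V$ you must attach along the dual cores $c_j'\subset M(J)$, and the dual of $\epsilon_j$--surgery is $(-\epsilon_j)$--surgery, so the self-intersections are $-\epsilon_j$, not $\epsilon_j$. Under the standard convention an $\epsilon$--crossing change is realised by $\epsilon$--surgery, so a positive crossing change gives $\epsilon_j=+1$ and hence self-intersection $-1$ (and dually for negative). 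Thus your expected identification ``positive crossing changes give $\epsilon_j=-1$'' is wrong as a statement about the crossing-change/surgery dictionary; it only comes out right because of the cobordism-reversal sign you did not account for. The paper avoids this entirely by switching viewpoint at the outset: it notes that equivalently $J$ is turned into $K$ by $u_+$ \emph{negative} and $u_-$ \emph{positive} crossing changes, so the surgery circles sit directly in $X(J)\subset M(J)$ with framings $n_i=-1$ ($u_+$ of them) and $n_i=+1$ ($u_-$ of them), and no dualisation is needed.

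A minor further point: the paper makes the off-diagonal vanishing explicit by taking the bounding surfaces in a collar $M(J)\times[0,1]$ at distinct heights (the $c_i$ are null-homologous already in $M(J)$, not merely in $V$), which makes the disjointness you assert immediate rather than something that needs argument.
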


\begin{proof}
We first recall the following well known reinterpretation of a crossing change.
Let $K\subset S^3$ be a knot. Suppose we perform an $\eps$--crossing change along a crossing.
We denote by $D\subset S^3$ an embedded disk which intersects $K$ in precisely two points with opposite orientations,
one point on each strand involved in the crossing change.
If we now perform $\eps$--surgery on the curve $c$, then the resulting 3--manifold $\S$ is diffeomorphic to $S^3$, and $K\subset \S$
is the result of performing an $\eps$--crossing change.

In the following we will use  the following notation: let $c_1,\dots,c_s$ be simple closed curves which form the unlink in $S^3$ and let $\eps_1,\dots,\eps_s\in \{-1,1\}$, then we denote by
$\S(c_1,\dots,c_s,\eps_1,\dots,\eps_s)$ the result of performing $\eps_i$--surgery along $c_i$ for $i=1,\dots,s$.
Note that this 3--manifold is diffeomorphic to the standard 3--sphere.

Let $K$ be a knot such that  $u_+$ positive crossing changes and $u_-$ negative crossing changes turn $K$ into an Alexander polynomial one knot $J$.
Put differently, there exists an Alexander polynomial one knot $J$
such that  $u_+$ negative  crossing changes and $u_-$ negative positive changes turn $J$ into $K$.
We write $s=u_++u_-$ and $n_i=-1$ for $i=1,\dots,u_+$ and $n_i=1$ for $i=u_++1,\dots,u_++u_-$.
By the above discussion  there exist simple  closed curves  $c_1,\dots,c_s$  in $X(J)$
with the following properties:
\bn
\item $c_1,\dots,c_s$ are the unlink in $S^3$,
\item the linking numbers $\lk(c_i,K)$ are zero,
\item the image of $J$ in
\[ \S(c_1,\dots,c_s,n_1,\dots,n_{s})\]
is the knot $K$.
\en
Note that the curves $c_1,\dots,c_s$ lie in $S^3\sm \nu J$ and we can thus view them as lying in $M(J)$.
The manifold  $M(K)$ is then the result of $n_i$ surgery on $c_i\subset M(J)$ for $i=1,\dots,s$.

Since $J$ is a knot with trivial  Alexander polynomial it follows from Freedman's theorem (see \cite[Theorem 117B]{FQ90}),
that $J$ is topologically slice, in fact there exists a locally flat slice disk $D\subset D^4$ for $J$ such that
$\pi_1(D^4\sm D)=\Z$. We now write $X:=D^4\sm \nu D$. Then $X$ is an oriented topological 4--manifold $X$ with  the following properties:
\bn
\item $\partial X=M(J)$ as oriented manifolds,
\item $\pi_1(X)=\Z$,
\item $H_1(M(J);\Z)\to H_1(X;\Z)$ is an isomorphism,
\item $H_2(X;\Z)=0$.
\en
We denote by $W$ the 4-manifold which is the result of  adding 2-handles along
$c_1,\dots,c_s\subset M(J)$ with framings $n_1,\dots,n_s$ to $X$.
Note that $\partial W=M(K)$ as oriented manifolds.
We will henceforth write $M=M(K)$.
Note that the curves $c_1,\dots,c_s$ are null-homologous, it follows easily that the map
$H_1(M;\Z)\to H_1(W;\Z)$ is an isomorphism and that $\pi_1(W)\cong \Z$.
It thus remains to prove the following claim:

\begin{claim}
The ordinary intersection pairing on $W$ is represented by a  diagonal matrix of size $u_++u_-$ such that $u_+$ diagonal entries are equal to $-1$ and $u_-$ diagonal entries are equal to $1$.
\end{claim}

Recall that the curves $c_1,\dots,c_s$ form the unlink in $S^3$ and that the linking numbers $\lk(c_i,J)$ are zero.
In particular the curves $c_1,\dots,c_s$ are also null homologous in $M(J)$.
It is clear that we can now find disjoint surfaces $F_1,\dots,F_s$ in $M(J)\times [0,1]$
such that $\partial F_i=c_i\times 1$. By adding the cores of the 2--handles attached to the $c_i$
we now obtain closed surfaces $C_1,\dots,C_s$ in $W$. It is straightforward to see that
$C_i\cdot C_j=0$ for $i\ne j$ and $C_i\cdot C_i=n_i$.
A Meyer--Vietoris argument shows that the surfaces $C_1,\dots,C_s$ present a basis for $H_2(W;\Z)$.
In particular the intersection matrix on $W$ with respect to this basis is given by $(C_i\cdot C_j)$,
i.e. it  is a diagonal matrix such that $u_+$ diagonal entries are equal to $-1$ and $u_-$ diagonal entries are equal to $1$.
This concludes the proof of the claim.

\end{proof}

\begin{remark}
In the proof of Theorem  \ref{thm:w2} (and thus in the proof that $u_a(K)\geq n(K)$),
we made use of Freedman's theorem that a knot with trivial Alexander polynomial is topologically slice.
This deep topological fact is not necessary to prove Theorem~\ref{thm1}, but it simplifies the algebra and the exposition.
\end{remark}

If a knot $K$ has unknotting number $u$, then
 Montesinos \cite{Mo73} has shown that the 2--fold branched cover $\S(K)$  is given by Dehn surgery on some framed
link in $S^3$ with $u$ components, with half-integral framing coefficients.
This fact is used in the original proof of the Lickorish obstruction
and it lies at the heart of some of the deepest results on unknotting numbers (see e.g. \cite{OS05} and \cite{Ow08})
which are obtained by studying Heegaard--Floer invariants of the \emph{compact} 3--manifold $\S(K)$.

Let $K$ be a knot such that  $u_+$ positive crossing changes and $u_-$ negative crossing changes turn $K$ into the unknot.
If we take $X=S^1\times D^3$ in the proof of Theorem \ref{thm:w2}, then we immediately see that
 there exists  an oriented \emph{smooth} 4--manifold $W$ which satisfies the Properties (1) -- (4) of Theorem \ref{thm:w2}.
This suggests  that further information on unknotting numbers can be obtained from considering higher cyclic covers (or the infinite cyclic cover)
of $M(K)$.

\section{The Blanchfield pairing and the linking pairing}\label{section:linkingform}

In this section we will relate the Blanchfield pairing to the linking pairing on the homology of the 2--fold branched cover of a given knot $K$.

\subsection{Preliminary results}
The following proposition is a key tool in relating $n(K)$ to other invariants:

\begin{proposition}[\expandafter{\cite[Proposition~1.7.1]{Ran81}}] \label{prop:ra81}
Let $A$ and $B$ be hermitian matrices over $\zt$ with $\det(A(1))=\det(B(1))=\pm 1$.
Then $\lambda(A)\cong \lambda(B)$  if and only if $A$ and $B$ are related by a sequence of  the following three moves:
\bn
\item replace $C$ by $PC\ol{P}^t$ where $P$ is a matrix over $\zt$ with $\det(P)=\pm 1$,
\item replace $C$ by the block sum $C\oplus D$ where $D$ is a hermitian matrix over $\zt$ with $\det(D)=\pm 1$,
\item the inverse of (2).
\en
\end{proposition}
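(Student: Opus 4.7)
My plan is to split the equivalence into the routine forward direction and the more substantial converse, relying on the standard L-theoretic device of detecting isometries via Lagrangians in hyperbolic constructions.

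For $(\Leftarrow)$, I would verify that each of the three moves preserves both the isometry class of $\lambda(C)$ and the property $\det(C(1)) = \pm 1$. Move (1) induces the obvious $\zt$-module isomorphism $\zt^n/C\zt^n \to \zt^n/(PC\ol{P}^t)\zt^n$, and a routine direct computation confirms this is an isometry of pairings. For move (2), the key observation is that the self-conjugate units of $\zt$ are precisely $\pm 1 \in \Z$; so $\det(D) = \pm 1$ forces $D$ to be a unit in $M_n(\zt)$, whence $\zt^n/D\zt^n = 0$ and $\lambda(C \oplus D) \cong \lambda(C)$. Move (3) is just the inverse of (2). The $t=1$ condition is preserved because $\det(P(1))^2 = 1$ and $\det(D(1)) = \pm 1$ whenever $\det(D)$ is a unit in $\zt$.

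For $(\Rightarrow)$, the strategy is to promote an abstract isometry $\phi \colon \lambda(A) \xrightarrow{\cong} \lambda(B)$ to a chain of matrix-level moves. I would form the block matrix $C := A \oplus (-B)$, note $\det(C(1)) = \pm 1$, and observe that its pairing $\lambda(C) = \lambda(A) \oplus (-\lambda(B))$ has a Lagrangian: the graph
\[ \Gamma = \{(x,\phi(x)) : x \in \zt^n/A\zt^n\} \subset \zt^{n+m}/C\zt^{n+m} \]
satisfies $\Gamma = \Gamma^\perp$ with respect to $\lambda(C)$. The plan then hinges on the following key reduction lemma: \emph{any hermitian matrix $C$ over $\zt$ with $\det(C(1)) = \pm 1$ whose linking pairing admits a Lagrangian is related by moves (1)--(3) to the empty matrix}. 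Granting this lemma, the chain
\[ A \; \leadsto \; A \oplus (-B) \oplus B \; \leadsto \; 0 \oplus B \; \leadsto \; B \]
finishes the proof, since $(-B) \oplus B$ carries the diagonal Lagrangian and has $\det((-B \oplus B)(1)) = \pm 1$, so the lemma reduces its presence to the empty matrix.

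To prove the reduction lemma, I would lift $\Gamma$ to a free $\zt$-direct summand $L \subset \zt^{n+m}$ of half the rank and extend a basis of $L$ to a basis of $\zt^{n+m}$. After this basis change (move (1)), $C$ takes the block form $\bigl(\begin{smallmatrix} 0 & P \\ \ol{P}^t & Q \end{smallmatrix}\bigr)$, and the condition $\Gamma = \Gamma^\perp$ combined with nonsingularity of $\lambda(C)$ forces $P$ to be a unit over $\zt$. A subsequent congruence by $\bigl(\begin{smallmatrix} I & 0 \\ X & I \end{smallmatrix}\bigr)$ then clears $Q$, leaving a standard hyperbolic block of determinant $\pm 1$ over $\zt$, which is removable via move (3). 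The main obstacle I anticipate is controlling integrality throughout: lifting $\Gamma$ to a \emph{free summand over $\zt$} rather than just over the fraction field $\Omega = \Q(t)$, and performing the clearing of $Q$ integrally, both require the hypothesis $\det(C(1)) = \pm 1$, which prevents $2$-primary obstructions at $t=1$ from appearing. I would follow Ranicki's treatment in \cite{Ran81} via the language of formations and half-rank subkernels to carry out these delicate bookkeeping steps.
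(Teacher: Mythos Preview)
The paper does not prove this proposition; it simply cites Ranicki. Your easy direction is fine, but the reduction lemma underpinning your hard direction is false as stated.

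Each of the three moves preserves the isomorphism type of the cokernel $\zt^N/C\zt^N$ --- and in particular $\det(C)$ up to sign. Move (1) is a congruence and so presents an isomorphic module; moves (2) and (3) add or remove a block $D$ with $\det(D)=\pm 1$, which presents the zero module. The empty matrix has trivial cokernel. But $C=A\oplus(-B)$ presents $(\zt^n/A\zt^n)\oplus(\zt^m/B\zt^m)$, which is nonzero whenever $\lambda(A)$ is nontrivial. Hence $A\oplus(-B)$ can never be move-equivalent to the empty matrix, and the same objection applies to $(-B)\oplus B$; both of the first two arrows in your chain fail.

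The flaw in your sketch of the lemma is that you conflate a Lagrangian for the \emph{linking} pairing $\lambda(C)$ on the torsion module $\zt^N/C\zt^N$ with a Lagrangian for the $\zt$-valued form $C$ on the free module $\zt^N$ itself. The graph $\Gamma$ of the isometry supplies only the former. Your block shape $\bigl(\begin{smallmatrix}0&P\\\ol{P}^t&Q\end{smallmatrix}\bigr)$ with $P$ invertible over $\zt$ demands the latter: a half-rank direct summand $L\subset\zt^N$ on which $C$ itself vanishes. The determinant obstruction just described shows no such $L$ exists once $\det(C)$ is not a unit in $\zt$. (The preimage of $\Gamma$ in $\zt^N$ is a full-rank sublattice containing $C\zt^N$, not a half-rank isotropic summand.) What is actually needed is not ``$A\oplus(-B)$ reduces to nothing'' but rather the existence of unimodular hermitian matrices $D,E$ over $\zt$ together with a congruence $A\oplus D\cong B\oplus E$; Ranicki obtains this by directly comparing the two free resolutions of the common linking module and using a lift of the isometry to build the stabilizing blocks, with the hypothesis $\det(\cdot)(1)=\pm 1$ ensuring the construction stays integral.
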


We can now prove the following lemma:

\begin{lemma}\label{lem:signetaa}
Let $A(t)$ be a hermitian matrix over $\zt$ with $\l(A(t))\cong \l(K)$, then
\[ \ba{rcll} \sign(A(z))-\sign(A(1))&=&\s_z(K), &\mbox{ for any $z\in S^1$ and }\\
\null(A(z))&=&\eta_z(K),&\mbox{ for any $z\in \C\sm \{0,1\}$.}\ea \]
\end{lemma}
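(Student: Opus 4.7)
The plan is to first verify the lemma directly for the specific Kearton matrix $A_K(t)$ of equation~(\ref{equ:akt}) coming from a Seifert matrix $V$ for $K$, and then to use Proposition~\ref{prop:ra81} to propagate the conclusion to an arbitrary $A(t)$ representing $\l(K)$. The only mildly delicate step is the computation of $\sign A_K(1)$, since identity~(\ref{equ:aktlt}) degenerates at $t=1$; everything else is a routine transport of signature and nullity along congruences and orthogonal sums.

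For the first step, fix a representative $V$ of the $S$-equivalence class of Seifert matrices satisfying~(\ref{equ:vvt}) and set $Q(z):=\bp (1-z)\id_k & 0\\ 0 & \id_k\ep$, which is invertible for every $z\in\C\setminus\{0,1\}$. For $z\in S^1$ one has $\bar{Q(z)}^t=\bp (1-z^{-1})\id_k & 0\\ 0 & \id_k\ep$, so identity~(\ref{equ:aktlt}) evaluated at $t=z$ exhibits $A_K(z)$ as hermitian-congruent via $Q(z)$ to $(1-z)V+(1-z^{-1})V^t$ when $z\in S^1\setminus\{1\}$, and as $\C$-linearly congruent to the same matrix when $z\in\C\setminus\{0,1\}$. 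Reading off signatures and nullities gives $\sign A_K(z)=\s_z(K)$ on $S^1\setminus\{1\}$ and $\null A_K(z)=\eta_z(K)$ on $\C\setminus\{0,1\}$. At $z=1$ the congruence degenerates, but the block-matrix calculation from the proof of Lemma~\ref{lem:diag1} identifies $A_K(1)=\bp B & \id_k\\ \id_k & 0\ep$, which is hyperbolic and therefore has signature zero; hence $\sign A_K(z)-\sign A_K(1)=\s_z(K)$ for every $z\in S^1$.

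For the second step, let $A(t)$ be an arbitrary hermitian matrix over $\zt$ with $\l(A(t))\cong\l(K)$. The isometry of pairings restricts to an isomorphism of $\zt$-modules $\zt^n/A\zt^n\cong H_1(X(K);\zt)$, so the two sides have the same order ideal, whence $\det A(t)=\pm t^j\Delta_K(t)$ for some $j\in\Z$ and $\det A(1)=\pm\Delta_K(1)=\pm 1$. Since also $\det A_K(1)=(-1)^k$ (from the proof of Proposition~\ref{prop:blakt}), Proposition~\ref{prop:ra81} applies and writes $A(t)$ as the result of applying to $A_K(t)$ a finite sequence of the three moves. It therefore suffices to show that each move preserves both $\sign C(z)-\sign C(1)$ and $\null C(z)$. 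A move of type~(1), $C\mapsto PC\bar P^t$ with $\det P=\pm 1\in\zt$, induces at every $z\in S^1$ a hermitian congruence of $C(z)$ by the invertible matrix $P(z)$ and at $z=1$ an integral congruence of $C(1)$ by $P(1)\in\GL_n(\Z)$; both signatures and the nullity are preserved. Moves~(2) and~(3) add or remove a hermitian block $D$ with $\det D=\pm 1\in\zt$; then $D(z)$ is non-singular for every $z\ne 0$, so $\null D(z)=0$, and $z\mapsto\sign D(z)$ is locally constant on the connected set $S^1$, hence equals $\sign D(1)$ throughout $S^1$. Additivity of signature and nullity under orthogonal sums then gives the desired invariance and completes the proof.
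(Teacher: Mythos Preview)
Your proof is correct and follows essentially the same strategy as the paper: verify the identity for the Kearton matrix $A_K(t)$ via the congruence~(\ref{equ:aktlt}), then use Proposition~\ref{prop:ra81} to show that the three moves preserve $\sign C(z)-\sign C(1)$ and $\null C(z)$. You are more explicit than the paper on two points the authors leave implicit---the direct computation $\sign A_K(1)=0$ from the hyperbolic block form of Lemma~\ref{lem:diag1}, and the verification that $\det A(1)=\pm 1$ needed to invoke Proposition~\ref{prop:ra81}---but the overall architecture is identical.
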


\begin{proof}
First let $D(t)$ be any hermitian matrix over $\zt$. It is well--known that the function
\[ \ba{rcl} S^1&\to & \Z \\ z&\mapsto &\sign(D(z))\ea \]
is constant
outside of the set of zeros of $D(t)$. In particular if $\det(D(t))=\pm 1$, then the signature function is constant.
It now follows easily from  Proposition \ref{prop:blakt} that if $A(t)$ and $B(t)$ are hermitian matrices over $\zt$ with $\l(A(t))\cong \l(B(t))$,
then
\[ \sign(A(z))-\sign(A(1))=\sign(B(z))-\sign(B(1)) \mbox{ for any $z\in S^1$.}\]
The  first claim now follows  from (\ref{equ:aktlt}) and Proposition \ref{prop:blakt}.
The proof of the second statement also follows from a similar argument.
\end{proof}

\subsection{Linking pairings}

We will now relate the Blanchfield pairing to the linking pairing on the 2--fold branched cover. Later on, this will allow us to relate $n(K)$ to the Lickorish obstruction and the Jabuka obstruction,
as well as
to get new computable obstructions to $n(K)=2$ or $n(K)=3$.

A \emph{linking pairing} is a non-singular symmetric bilinear pairing $H\times H\to \Q/\Z$ where $H$ is a finite abelian group
of odd order. If $l$ and $l'$ are isometric linking pairings, then we write $l\cong l'$.
An example is the linking pairing $l(K)$ defined on $H_1(\Sigma(K))$.

 Given a symmetric integral matrix $A$ with $\det(A)$ odd we denote by $l(A)$  the linking pairing which is defined as follows:
\[ \ba{rcl} \Z^n/A\Z^n \times \Z^n/A\Z^n&\to &\Q/\Z \\ (v,w)&\mapsto &v^t A^{-1}w.\ea \]
Given a linking pairing $l\colon H\times H\to \Q/\Z$ and $n\in \Z$, coprime to $|H|$, we denote by $n\cdot l$ the
linking pairing given by $(n\cdot l)(v,w):=n\cdot l(v,w)$.

We can now formulate and prove the following lemma.

\begin{lemma}\label{lem:atlinkingform}
Let $K$ be a knot and let $A(t)$ be a hermitian matrix over $\zt$ such that $\l(A(t))\cong \l(K)$.
Then
\[ l(A(-1)) \cong 2 l(K).\]
\end{lemma}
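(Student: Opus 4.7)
My plan is to reduce the claim to a single reference matrix via Proposition~\ref{prop:ra81} and then carry out the computation explicitly for that matrix. First observe that any hermitian matrix $A(t)$ over $\L$ with $\l(A(t))\cong\l(K)$ presents the Alexander module, so $\det A(t)=\pm t^j\Delta_K(t)$; in particular $\det A(1)=\pm 1$ and $|\det A(-1)|=|\det(K)|$. Since $\Delta_K(1)=1$, reduction modulo $2$ shows that $\det(K)=\Delta_K(-1)$ is odd, so $l(A(-1))$ is a well-defined linking pairing on a finite abelian group of odd order.

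Proposition~\ref{prop:ra81} implies that any two hermitian matrices over $\L$ presenting $\l(K)$ are related by a sequence of moves (1)--(3), so it suffices to check the conclusion for one specific $A(t)$ and to verify that the isometry class of $l(A(-1))$ is invariant under each of the three moves. For move (1), replacing $A$ by $QA\ol{Q}^t$ with $\det Q$ a unit in $\L$ gives $Q(-1)A(-1)Q(-1)^t$ at $t=-1$; since units in $\L$ are of the form $\pm t^k$ we have $\det Q(-1)=\pm 1$, so this is an integral congruence. For moves (2) and (3), adding or removing a hermitian block $D(t)$ with unit determinant is harmless: hermiticity forces $\det D(t)$ to be reciprocal, so $\det D(t)=\pm 1$, hence $\det D(-1)=\pm 1$ and $D(-1)$ contributes the trivial linking pairing.

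For the reference matrix I take $A_K(t)$ as in \eqref{equ:akt}, built from a non-singular Seifert matrix $V$ of size $2k$ satisfying \eqref{equ:vvt} (which exists by the argument in Lemma~\ref{lem:ndegdelta}). Setting $t=-1$ in \eqref{equ:aktlt}, writing $L=V+\vt$ and $P=\bp 2I_k & 0\\ 0 & I_k\ep$, gives the key identity $PA_K(-1)P=2L$. Inverting yields $A_K(-1)^{-1}=\tfrac12 PL^{-1}P$ and hence the pairing relation $l(A_K(-1))(v,w)=\tfrac12 l(L)(Pv,Pw)$ in $\Q/\Z$. A direct check (using $PA_K(-1)\Z^{2k}=L\cdot(\Z^k\oplus 2\Z^k)\subset L\Z^{2k}$ together with both quotient groups having the same odd order $|\det(K)|$) shows that $v\mapsto Pv$ descends to an isomorphism $\tilde\phi\colon \Z^{2k}/A_K(-1)\Z^{2k}\to\Z^{2k}/L\Z^{2k}$. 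Because the target is odd-torsion, $2$ is invertible on it, so $\psi:=2^{-1}\tilde\phi$ is a well-defined group isomorphism, and it intertwines the two pairings:
\[
2\,l(K)(\psi v,\psi w)\,=\,2\cdot 2^{-2}\,l(L)(\tilde\phi v,\tilde\phi w)\,=\,\tfrac12\,l(L)(Pv,Pw)\,=\,l(A_K(-1))(v,w).
\]
This yields $l(A_K(-1))\cong 2\,l(K)$, and combined with the invariance argument above concludes the proof.

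The main subtlety is that \eqref{equ:aktlt} at $t=-1$ only provides a rational congruence between $A_K(-1)$ and $V+\vt$ with an extra factor of $2$, not an integral one. What rescues the proof is the oddness of $\det(K)$, which makes $2$ invertible on all the finite abelian groups in play; this both legitimises the isomorphism $\psi$ and accounts for the factor of $2$ appearing in the statement $l(A(-1))\cong 2\,l(K)$.
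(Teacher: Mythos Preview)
Your proof is correct and follows essentially the same route as the paper's: use Proposition~\ref{prop:ra81} to show that $A(t)\mapsto l(A(-1))$ is well-defined on isometry classes of Blanchfield pairings, and then compute for the reference matrix $A_K(t)$. For that reference computation the paper simply invokes the analogy with the proof of Proposition~\ref{prop:blakt} (replacing $\L\to\L_0$ there by $\Z\to\Z[\tfrac12]$), whereas you carry out the matrix manipulation explicitly; both arguments rest on the same point, namely that $2$ is invertible on the odd-order group $\Z^{2k}/(V+\vt)\Z^{2k}$.

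One small remark: in your ``direct check'' that $\tilde\phi$ is an isomorphism, the two ingredients you list (the inclusion $PA_K(-1)\Z^{2k}\subset L\Z^{2k}$ and equality of orders) give well-definedness but not bijectivity by themselves. The missing line is that $P\Z^{2k}\supset 2\Z^{2k}$, and since multiplication by $2$ is surjective on the odd-order group $\Z^{2k}/L\Z^{2k}$, the map $\tilde\phi$ is onto and hence bijective. This is exactly what passing to $\Z[\tfrac12]$ accomplishes in the paper's version.
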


\begin{proof}
We now   denote by $\MM$ the set of all hermitian matrices $A$ over $\zt$ such that $\det(A(1))=\pm 1$.
 We say that $A,B\in \MM$ are \emph{equivalent}, written $A\sim B$, if $\l(A)$ and $\l(B)$ are isometric.
 We furthermore denote by $\LL$ the set of isometry classes of linking pairings.
We consider the  map
\[ \ba{rcl} \Phi\colon  \MM&\to & \LL \\
A(t)&\mapsto &l(A(-1)).\ea \]
(Note that $\det(A(-1))\equiv \det(A(1))=\pm 1 \,\mod 2$.)
It follows immediately from Proposition \ref{prop:ra81}
that the map $\Phi$  descends to a map
\[ \MM/\sim \,\,\to \,\LL.\]

Let $V$ be a Seifert matrix  for $K$.  We define $A_K(t)$ as in Section \ref{section:seifertbf}.
It is well-known (see e.g. \cite{Go78}) that the  linking pairing $l=l(K)$ on $H_1(\Sigma(K);\Z)$  is isometric to
$l(V+\vt)$.

An argument analogous to the proof of Proposition \ref{prop:blakt} with $\Lambda$ replaced by $\Z$, $\L_0$ replaced by $\Z[\frac{1}{2}]$
and $t$ replaced by $-1$ then shows that the linking pairing $\Phi(A_K(t))$ is isometric to the pairing $2l(K)$.

Now let $A(t)$ be a hermitian matrix over $\zt$ such that $\l(A(t))\cong \l(K)$. Then $A(t)\sim A_K(t)$ and it follows from the above
that
\[ l(A(-1))=\Phi(A(t))\cong \Phi(A_K(t))\cong 2l(K).\]
\end{proof}

\subsection{The linking pairing and the algebraic unknotting number}

In the following we refer to a positive definite matrix as $(+1)$--definite and we refer to a negative definite matrix as a $(-1)$--definite matrix.
We now have the following theorem.

\begin{theorem}\label{thm:representlk}
If $n(K)=n$, then there exists a symmetric $n\times n$--matrix $A$ over $\Z$
which has  the following three properties:\
\bn
\item $|\det(A)|=|\det(K)|$,
\item $l(A)\cong 2l(K)$,
\item $A$ modulo two equals the identity matrix.
\en
If $\s(K)=2n\cdot \eps$ with  $\eps \in \{-1,1\}$, then we can furthermore arrange that $A$ has the following two properties:
\bn
\item[(4)] $A$ is $\eps$--definite,
\item[(5)] the diagonal entries of $A$ modulo four are equal to $-\eps$.
\en
\end{theorem}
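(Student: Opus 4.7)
The plan is to take a hermitian $n \times n$ matrix $A(t)$ over $\zt$ realizing $n(K)$, first normalize it so that $A(1)$ is itself a diagonal matrix $D$ with $\pm 1$ entries, and then simply set $A := A(-1)$. Such a normalization is possible because, by definition of $n(K)$, there exists a matrix $A(t)$ with $\lambda(A(t)) \cong \lambda(K)$ and $A(1)$ congruent over $\Z$ to some $\pm 1$ diagonal matrix $D$; picking $P \in \gl_n(\Z)$ with $P A(1) P^t = D$ and replacing $A(t)$ by $P A(t) P^t$ preserves the Blanchfield class (this is a move of type (1) in Proposition \ref{prop:ra81}) while arranging $A(1) = D$ on the nose.

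Properties (1)--(3) will then follow quickly. Property (2) is precisely Lemma \ref{lem:atlinkingform}. Property (1) follows from the observation that $\lambda(A(t)) \cong \lambda(K)$ forces the order ideals of the underlying modules to agree, so $\det A(t) = \pm t^k \Delta_K(t)$ in $\zt$ and hence $|\det A(-1)| = |\Delta_K(-1)| = |\det K|$. For property (3), reducing modulo $2$ identifies $t$ with $-t$, so $A(-1) \equiv A(1) = D \equiv I \pmod 2$, since the diagonal entries of $D$ are $\pm 1$.

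For the definite case $\s(K) = 2n\eps$, the plan is to invoke Lemma \ref{lem:signetaa} at $z = -1$, which yields
\[
\sign A(-1) - \sign A(1) \,=\, \s_{-1}(K) \,=\, \s(K) \,=\, 2n\eps.
\]
Since both signatures lie in $[-n, n]$, the difference can equal $2n\eps$ only when $\sign A(-1) = n\eps$ and $\sign A(1) = -n\eps$. The first equality gives property (4), namely that $A = A(-1)$ is $\eps$-definite; the second forces every $\pm 1$ entry on the diagonal of $D = A(1)$ to equal $-\eps$.

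The step I expect to be the most delicate, and the one where the hermitian structure is used essentially, is property (5). The condition $A_{ij}(t) = A_{ji}(t^{-1})$ forces each diagonal entry $A_{ii}(t)$ to be a \emph{symmetric} Laurent polynomial of the form $a_0 + \sum_{k \ge 1} a_k(t^k + t^{-k})$ with integer coefficients. A short direct computation then gives $A_{ii}(-1) - A_{ii}(1) = -4 \sum_{k \text{ odd}} a_k \equiv 0 \pmod 4$, which, combined with $A_{ii}(1) = -\eps$ from the previous paragraph, yields property (5). The rest of the argument is bookkeeping; the only substantive input beyond the definition of $n(K)$ is the identification of $l(A(-1))$ with $2l(K)$ supplied by Lemma \ref{lem:atlinkingform}, together with the signature--Blanchfield comparison in Lemma \ref{lem:signetaa}.
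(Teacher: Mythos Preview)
Your proof is correct and follows essentially the same route as the paper: normalize so that $A(1)$ is itself the diagonal $\pm 1$ matrix, set $A=A(-1)$, invoke Lemma~\ref{lem:atlinkingform} for (2) and Lemma~\ref{lem:signetaa} for (4)--(5), and handle (1), (3), (5) by elementary polynomial arithmetic. The only cosmetic difference is that the paper packages the mod-$2$ and mod-$4$ congruences via the factorizations $b_{ii}(t)=\eps_i+(t-1)(t^{-1}-1)c_{ii}(t)$ and $b_{ij}(t)=(t-1)c_{ij}(t)$, whereas you obtain the same congruences directly from the Laurent expansion; these are equivalent formulations of the same computation.
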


\begin{remark}
Theorem \ref{thm:representlk} is closely related to \cite[Theorem~3]{Ow08}, which is the main technical theorem of \cite{Ow08}.
More precisely, Owens shows in  \cite[Theorem~3]{Ow08} that if $u(K)=u$, then $\S(K)$ can be obtained by Dehn surgery along a $u$--component link with a certain framing matrix.
It then follows from that surgery description of $\S(K)$ that there exists a $u\times u$--matrix $A$ over $\Z$ which has properties described in Theorem \ref{thm:representlk}.
\end{remark}

\begin{proof}
Since $n(K)=n$ we can find  a hermitian $n\times n$--matrix $B(t)$ over $\zt$ such that $\l(B)\cong \l(K)$
and such that $B(1)$ is diagonalizable over $\Z$. Note that $B(t)$ in particular represents the Alexander module,
it follows that $\det(B(t))=\pm \Delta_K(t)$, i.e. $\det(B(1))=\pm 1$ and $\det(B(-1))=\pm \det(K)$.

Let $P$ be an invertible integral matrix such that $PB(1)P^t$ is diagonal. After replacing $B$ by $PBP^t$ we can thus arrange
that $B(1)$ is already diagonal. We denote the diagonal entries by $\eps_1,\dots,\eps_n$.
We furthermore denote by $b_{ij}=b_{ij}(t)$ the entries of $B=B(t)$. The fact that $b_{ii}(1)=\eps_i$ and the fact that $b_{ii}(t^{-1})=b_{ii}(t)$ implies that
\[ b_{ii}=\eps_i+(t-1)(t^{-1}-1)c_{ii}\]
for some polynomial $c_{ii}\in \zt$ with $c_{ii}(t^{-1})=c_{ii}(t)$.
Furthermore, given $i\ne j$, the fact that $b_{ij}(1)=0$ implies that
\[ b_{ij}=(t-1)\cdot c_{ij}\]
for some $c_{ij}\in \zt$.
By Lemma \ref{lem:atlinkingform} the matrix $A:=B(-1)$ represents $2l(K)$. By the above we have $\det(A)=\pm \det(K)$.
It follows immediately from the above that $A=B(-1)$  agrees with the identity matrix modulo two.

We now assume that $\s(K)=2n\cdot \eps$ with $\eps \in \{-1,1\}$. It follows from Lemma  \ref{lem:signetaa}
that $\sign(B(-1)\oplus -B(1))=\sign(B(-1))-\sign(B(1))=2n\cdot \eps$. Since $B$ is an $n\times n$--matrix this implies that $\sign(B(-1))=n\cdot \eps$ and $\sign(B(1))=-n\cdot \eps$.
In particular $A=B(-1)$ is $\eps$--definite.  Since $B(1)$ is $\eps$--definite it follows also that
 $\eps_i=-\eps$ for $i=1,\dots,n$.  Since $b_{ii}=\eps+(t-1)(t^{-1}-1)c_{ii}$ it now follows that  $A$ has the desired fourth property.
\end{proof}

\section{Comparison of classical invariants}\label{section:classicallower} \label{section:classicaldef}

In this section we show that $n(K)$ subsumes the classical invariants stated in Theorem~\ref{thm:subsume}. We discuss each
of the criteria of Theorem~\ref{thm:subsume} in a separate subsection.

\subsection{Lower bounds on $u_a(K)$: The Nakanishi index}\label{section:nakanishi}

Let $K$ be a knot.
The first lower bounds on the unknotting number $u(K)$ were given by Wendt \cite{We37} who showed that
 \[ u(K)\geq \mbox{minimal number of generators of $H_1(\S(K);\Z)$}.\]
 (See also \cite{Kin57,Kin58} and \cite[Section~E]{BW84} for further details).
 These lower bounds are subsumed by the Nakanishi index.  More precisely,   by \cite[Theorem~3]{Na81} (see also \cite[Theorem~11.5.1]{Ka96}) we have the following inequality:
\[ u(K)\geq  m(K).\]
It is clear that if a hermitian matrix $A$ over $\zt$ satisfies $\l(K)\cong \l(A)$,
then $A$ is also presentation matrix for the Alexander module. We thus see that
\[ n(K)\geq m(K).\]
Together with Theorem \ref{thm1} this implies that $m(K)$ gives in fact a lower bound on the algebraic unknotting number.
This can also be deduced from modifying the proof provided by Nakanishi.

\subsection{Lower bounds on $u_a(K)$: The Levine--Tristram signatures and the nullities}\label{section:lteta}
Levine--Tristram signatures are well--known to give lower bounds on the topological 4--genus
$g_4^{top}(K)$ of a knot, and hence lower bounds to the algebraic unknotting number.
(See \cite{Mus65}, \cite{Lev69}, \cite{Tri69} and \cite{Ta79} for details.)
But in fact the following stronger inequality holds:

\begin{theorem}\label{thm:atsignatures}
Let $K$ be a knot which can be turned into an Alexander polynomial one knot using $u_+$ positive crossing changes and $u_-$ negative crossing changes.
Then for any $z\in S^1$ we have
\[     -2u_-\leq      \eta_z(K)+\s_z(K)\leq 2u_+\]
in particular we have
\[ n(K) \geq \mu(K):=\frac{1}{2}\left(\max\{ \eta_z(K)+\s_z(K) \, |\, z\in S^1\}+\max\{ \eta_z(K)-\s_z(K) \, |\, z\in S^1\}\right).\]
\end{theorem}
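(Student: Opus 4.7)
The plan is to apply Theorem \ref{thm1} and then reduce everything to a one-line computation in linear algebra on the resulting hermitian matrix. Given a sequence of $u_+$ positive and $u_-$ negative crossing changes taking $K$ to an Alexander polynomial one knot, Theorem \ref{thm1} supplies a hermitian matrix $A(t)$ over $\zt$ of size $n := u_+ + u_-$ with $\lambda(A(t)) \cong \lambda(K)$ and $A(1)$ diagonal with exactly $u_+$ entries equal to $-1$ and $u_-$ entries equal to $+1$. In particular $\sign(A(1)) = u_- - u_+$ and $\null(A(1)) = 0$.

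The main observation is the elementary identity for hermitian matrices. If $M$ is a hermitian $n \times n$ matrix with $n_+(M), n_-(M), n_0(M)$ positive, negative, and zero eigenvalues, then $n_+(M) + n_-(M) + n_0(M) = n$ gives $\sign(M) + \null(M) = n - 2n_-(M)$. Now Lemma \ref{lem:signetaa}, applied at $z \in S^1 \setminus \{1\}$, translates the invariants of $K$ into invariants of $A(z)$: namely $\sigma_z(K) = \sign(A(z)) + u_+ - u_-$ and $\eta_z(K) = \null(A(z))$. Substituting into the identity yields the key equation
\[
\eta_z(K) + \sigma_z(K) \;=\; \bigl(\null(A(z)) + \sign(A(z))\bigr) + (u_+ - u_-) \;=\; 2u_+ - 2n_-(A(z)).
\]
Since $0 \leq n_-(A(z)) \leq n$, the two-sided bound $-2u_- \leq \eta_z(K) + \sigma_z(K) \leq 2u_+$ is immediate; the boundary case $z = 1$ is trivial since both sides vanish.

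For the inequality $n(K) \geq \mu(K)$, the argument is identical but applied directly to a matrix realizing $n(K)$, bypassing Theorem \ref{thm1}. By definition of $n(K)$ there is a hermitian matrix $A(t)$ of size $n(K)$ with $\lambda(A(t)) \cong \lambda(K)$, and after a congruence over $\Z$ we may take $A(1)$ to be diagonal with $u_+$ entries $-1$ and $u_-$ entries $+1$ where $u_+ + u_- = n(K)$. Running the same computation, and its sign-swapped variant coming from the dual identity $-\sign(M) + \null(M) = n - 2n_+(M)$, gives $\eta_z(K) + \sigma_z(K) \leq 2u_+$ and $\eta_z(K) - \sigma_z(K) \leq 2u_-$ for every $z \in S^1$. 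Taking the maximum over $z$ in each and adding produces $2\mu(K) \leq 2(u_+ + u_-) = 2n(K)$.

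I do not expect any genuine obstacle; the proof is a mechanical unwinding of Theorem \ref{thm1}, Lemma \ref{lem:signetaa}, and the signature-nullity dimension count. The only mild care needed is to handle $z = 1$ separately (where $\eta_1(K) = 0$ by convention and $\sigma_1(K) = 0$ automatically, so both inequalities hold trivially).
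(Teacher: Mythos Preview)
Your proof is correct and follows essentially the same route as the paper: invoke Theorem~\ref{thm1} to obtain the hermitian matrix $A(t)$, apply Lemma~\ref{lem:signetaa}, and use the dimension count $n_+ + n_- + n_0 = n$ on $A(z)$ to extract the two-sided bound. You are in fact slightly more thorough than the paper, which leaves the $z=1$ case implicit and does not spell out that the ``in particular'' clause $n(K)\geq \mu(K)$ requires rerunning the same computation on a matrix of size $n(K)$ realizing the definition rather than on the matrix coming from Theorem~\ref{thm1}.
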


We expect that this theorem is known to the experts, but we are not aware of a proof  in the literature.

\begin{proof}
By Theorem \ref{thm1} there exists a hermitian matrix $A(t)$ of size $u_++u_-$ over $\zt$ with the following two properties:
\bn
\item $A(1)$ is a diagonal matrix such that $u_+$ diagonal entries are equal to $-1$ and $u_-$ diagonal entries are equal to $1$,
\item $\l(A(t))\cong \l(K)$.
\en
Now let $z\in S^1$. We denote by $b_+$ (respectively $b_-,b_0)$ the number of positive (respectively negative, zero) eigenvalues of $A(z)$. Then
it follows from Lemma \ref{lem:signetaa} that
\[ \ba{rcl} \eta_z(K)+\s_z(K)&=&\null(A(z))+\sign(A(z))-\sign(A(1))\\
&=&b_0+(b_+-b_-)-(-u_++u_-)\\
&=&b_0+b_++b_--(-u_++u_-)-2b_-\\
&\leq& (u_++u_-)-(-u_++u_-)=2u_+.\ea \]
Similarly one shows that $ \eta_z(K)+\s_z(K)\geq -2u_-$.
\end{proof}

\subsection{Lower bounds on $u_a(K)$: The Livingston invariant}\label{section:liv}\label{section:livingston}

We first recall the definition of Livingston's invariant.
Let $S$
be a subring of $\Q(t)$. We denote by $Q$ its quotient field and we denote by $W(S\to Q)$ the Witt group of non-degenerate hermitian pairings over free $S$-modules which become non-singular after tensoring with $Q$.
Put differently, $W(S\to Q)$  is the Witt group of hermitian matrices over $S$ such that the determinant is a unit in $Q$.
We refer to \cite{HM73} and \cite{Ran81} for details.

Let  $K\subset S^3$ be a knot. We  define $\rho_S(K)$
to be the minimal size of a square matrix representing $A_K(t)$ in $W(S\to Q)$.
It follows from (\ref{equ:aktlt})  that for $S=\Q(t)$ we obtain Livingston's invariant $\rho(K)$ (see \cite{Liv11}).
It follows immediately from the definitions that
\[  \rho(K)=\rho_{\Q(t)}(K)\leq \rho_{\zt}(K)\]
for any knot $K$.
For the reader's convenience we will provide a proof to the following proposition, which is well known to the experts,

\begin{proposition}\label{prop:rhozt}
Let $K$ be a knot. Then
\[ 2g_4^{top}(K)\geq \rho_{\zt}(K).\]
\end{proposition}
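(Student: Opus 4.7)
Let $g = g_4^{top}(K)$ and fix a locally flat oriented surface $F \subset D^4$ of genus $g$ with $\partial F = K$. I aim to produce a hermitian $2g \times 2g$ matrix $A(t)$ over $\L = \zt$ whose Witt class in $W(\L \to \O)$ equals $[A_K(t)]$; this gives $\rho_{\zt}(K) \le 2g$ and hence the claim.

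My first task is to construct a topological 4--manifold $V$ that tamely cobounds $M(K)$, in the sense of Definition \ref{def:tamelycobounds}, with $b_2(V) = 2g$. The natural starting point is the exterior $W := D^4 \setminus \nu F$; using $\nu F \cong F \times D^2$, a Mayer--Vietoris or Euler-characteristic computation (based on $\chi(F \times D^2) = 1 - 2g$) yields $\pi_1(W) = \Z$ and $b_2(W) = 2g$, but $\partial W = X(K) \cup_{T^2} F^{\circ} \times S^1$ is not $M(K)$ when $g \ge 1$. I would then modify $W$ by attaching $2$--handles along curves representing generators of $\pi_1(F^\circ)$ inside $F^{\circ} \times S^1$ and $3$--handles along complementary $2$--spheres, so as to replace the $F^\circ \times S^1$ piece by a solid torus $S^1 \times D^2$. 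A careful bookkeeping of handle attachments shows that the resulting $V$ tamely cobounds $M(K)$, satisfies $\pi_1(V) = \Z$ and $H_1(M(K);\Z) \to H_1(V;\Z)$ an isomorphism, and has $b_2(V) = 2g$. Applying Theorem \ref{thm:w1} to $V$ then furnishes a hermitian $2g \times 2g$ matrix $A(t)$ over $\L$ with $\lambda(A(t)) \cong \lambda(K)$; since $A(t)$ presents the $\L$--torsion Alexander module, $\det A(t)$ agrees with $\Delta_K(t)$ up to units, so $A(t)$ is non-singular over $\O$ and defines a class in $W(\L \to \O)$.

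It remains to verify $[A(t)] = [A_K(t)]$ in $W(\L \to \O)$. For this, let $V_0$ be a 4--manifold tamely cobounding $M(K)$ that realises $A_K(t)$, built by pushing a Seifert surface of $K$ into $D^4$ and carrying out the analogous handle modification. Gluing $V$ to $-V_0$ along $M(K)$ produces a closed oriented topological 4--manifold $X$ whose fundamental group is $\Z$, as follows from van Kampen together with the fact that both inclusion-induced maps on $H_1$ are isomorphisms. The twisted intersection form on $H_2(X;\L)$ modulo $\L$--torsion is then Witt-equivalent to $A(t) \oplus (-A_K(t))$, and a standard $L$-theoretic/cobordism argument shows that the twisted intersection form of a closed oriented topological 4--manifold with $\pi_1 = \Z$ is metabolic in $W(\L \to \O)$. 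This yields $[A(t)] = [A_K(t)]$, completing the proof.

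The main technical obstacle is the construction of $V$ with the prescribed $b_2$: a naive 2--handle attachment along curves in $F^{\circ} \times S^1$ doubles the rank of $H_2$ to $4g$, and the reduction back to $2g$ requires careful use of complementary 3--handle surgeries (or equivalently, a surgery description that keeps the twisted Euler characteristic equal to $2g$). A secondary subtlety arises in the final Witt-group argument, where one must ensure that no integral signature discrepancy coming from $\sigma(V) - \sigma(V_0)$ obstructs the equality $[A(t)] = [A_K(t)]$ in $W(\L \to \O)$.
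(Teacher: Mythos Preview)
Your proposal has a genuine gap at the outset: the assertion that $\pi_1(D^4 \setminus \nu F) = \Z$ cannot be deduced from Mayer--Vietoris or Euler-characteristic considerations, which only yield $H_1(W;\Z) \cong \Z$ and $\chi(W) = 2g$. For an arbitrary locally flat surface $F$ realizing $g_4^{top}(K)$ there is no reason for the complement to have abelian fundamental group (think of knotted slice disks for the unknot). Since Theorem~\ref{thm:w1} genuinely uses $\pi_1 = \Z$ to obtain $H_1(W;\L) = 0$ and hence freeness of $H_2(W;\L)$, this is not a harmless oversight; whether a minimal-genus surface with $\pi_1$-complement equal to $\Z$ always exists is itself a nontrivial question you have not addressed. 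Your final step is also problematic: the twisted intersection form of a closed oriented topological $4$--manifold with $\pi_1 = \Z$ is \emph{not} in general metabolic in $W(\L \to \O)$ --- consider $S^1 \times S^3 \,\#\, \C P^2$, whose form is $(1)$ --- so at best you obtain that $[A(t)] - [A_K(t)]$ lies in the image of $W(\Z)$, and the signature discrepancy you flag is a real obstruction rather than a mere subtlety.

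The paper's argument, by contrast, is short and purely algebraic after one classical input. One takes a Seifert surface of genus $k$ and invokes the fact (from the appendix of \cite{Liv11}) that there exist $k - g$ linearly independent curves on it on which the Seifert pairing vanishes. Extending these to a symplectic basis produces a Seifert matrix $V$, and hence $A_K(t)$, with a $(k-g)\times(k-g)$ zero block in the top-left corner; a standard row-and-column reduction over $\zt$ then shows that $A_K(t)$ is Witt-equivalent in $W(\zt \to \Q(t))$ to a $2g \times 2g$ block. No $4$--manifold construction, no handle bookkeeping, and no closed-manifold gluing argument is needed.
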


We expect that $\rho_{\zt}(K)$ is the `best possible' lower bound on the topological 4--genus which can be obtained from the Seifert matrix.

\begin{proof}
Let $F$ be a Seifert surface of genus $k$ for $K$.
Denote by $g$ the topological 4-genus of $K$. In that case the argument provided in the appendix of \cite{Liv11} shows that  there exist $k-g$ linearly independent curves on $F$ on which the Seifert pairing vanishes.
Since the intersection pairing on $F$ is determined by the Seifert pairing it follows that the pairwise intersection
numbers of the curves are zero. In particular we can extend this set of linearly independent curves to a
symplectic basis on $H_1(F;\Z)$. The corresponding Seifert matrix $V$  now has the following two properties:
\[ V=\bp 0_{k-g\times k-g} & *_{k-g\times k-g} & *_{k-g\times 2g} \\ *_{k-g\times k-g} & *_{k-g\times k-g}&  *_{k-g\times 2g} \\  *_{2g\times k-g} & *_{2g\times k-g} & *_{2g\times 2g} \ep \mbox{ and }  V-\vt= \bp 0 & \id_k \\ -\id_k &0 \ep,\]
where the subscripts indicate the  size of the matrix.
It now follows that $A_K(t)$ (as defined in Section \ref{section:seifertbf}) is of the form
\[ A_K(t)=\bp 0_{k-g\times k-g} & *_{k-g\times k-g} & *_{k-g\times 2g} \\ *_{k-g\times k-g} & *_{k-g\times k-g}&  *_{k-g\times 2g} \\  *_{2g\times k-g} & *_{2g\times k-g} & *_{2g\times 2g} \ep.\]
It is well-known that one can find an invertible matrix $P$ over $\zt$ such that
\[ PA_K(t)P^t= \bp 0_{k-g\times k-g} & B_{k-g\times k-g} & 0_{k-g\times 2g} \\ B_{k-g\times k-g}^t & 0_{k-g\times k-g}&  0_{k-g\times 2g} \\  0_{2g\times k-g} & 0_{2g\times k-g} & C_{2g\times 2g} \ep,\]
where $B$ is a  $(k-g)\times (k-g)$ matrix  and $C$ is a $2g\times 2g$-matrix.
It now follows that $A_K(t)$ and the $2g\times 2g$--matrix $C$ represent the same element in $W(\zt\to \Q(t))$.
\end{proof}

The classical invariant $\frac12 \rho_{\zt}(K)$ gives a lower bound on the topological 4--ball genus and thus on the algebraic unknotting number.
The following lemma now says, that as a lower bound on $u_a(K)$, the invariant $\frac12 \rho_{\zt}(K)$ is subsumed by $n(K)$.

\begin{lemma}
For any knot $K$ we have
\[ n(K)\geq \frac{1}{2}\rho_{\zt}(K).\]
\end{lemma}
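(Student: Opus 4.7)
The strategy is to pick a hermitian matrix $A$ over $\zt$ of size $n := n(K)$ realizing the definition of $n(K)$, construct from it an enlargement $B$ of size $2n$ representing the same Witt class as $A_K(t)$ in $W(\zt \to \Q(t))$, and thereby conclude $\rho_{\zt}(K) \leq 2n$, which is the claimed inequality.

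After conjugating $A$ by an invertible integer matrix (which preserves both $\lambda(A)$ and the Witt class), I may assume $A(1)$ equals a diagonal matrix $D$ with $\pm 1$ entries. I would then take $B := A \oplus (-D)$, a hermitian matrix over $\zt$ of size $2n$, and verify four properties by direct computation. First, $\lambda(B) \cong \lambda(A) \cong \lambda(K)$, since $-D$ is unimodular over $\Z$ and so $\lambda(-D)$ is trivial. Second, $B(1) = D \oplus (-D)$ is metabolic over $\Z$: the span of $e_i + e_{n+i}$, $i = 1,\dots,n$, is a lagrangian. Third, by the computation in the proof of Lemma \ref{lem:diag1}, $A_K(1)$ is also metabolic over $\Z$, since it has the form $\begin{pmatrix} B' & I \\ I & 0\end{pmatrix}$ which admits a lagrangian spanned by the last $k$ basis vectors. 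Fourth, by Lemma \ref{lem:signetaa}, for every $z \in S^1$, $\sign(B(z)) = \sign(A(z)) - \sign(A(1)) = \s_z(K) = \sign(A_K(z))$ (using that $\sign(A_K(1)) = 0$ by the third point).

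To promote these to the Witt equivalence $[B] = [A_K(t)]$ in $W(\zt \to \Q(t))$, I would apply Proposition \ref{prop:ra81} to $B$ and $A_K(t)$: since they share the same Blanchfield form and both have unimodular values at $t = 1$, there exist unimodular hermitian matrices $D_1, D_2$ over $\zt$ and an invertible matrix $P$ over $\zt$ with $P(B \oplus D_1)\ol{P}^t = A_K(t) \oplus D_2$. Therefore
\[ [B] - [A_K(t)] = [D_2 \oplus (-D_1)] \in W(\zt \to \Q(t)). \]
Comparing signatures on $S^1$ using the fourth property forces $\sign(D_1(z)) = \sign(D_2(z))$ for every $z \in S^1$, and evaluating at $t = 1$ using the second and third properties forces $[D_1(1)] = [D_2(1)]$ in $W(\Z)$.

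The main obstacle is the final Witt-theoretic step: concluding that a unimodular hermitian matrix over $\zt$ (namely $D_2 \oplus (-D_1)$) whose signature function on $S^1$ vanishes identically and whose value at $t = 1$ is Witt-trivial in $W(\Z)$ is itself Witt-trivial in $W(\zt \to \Q(t))$. This is not formal from Proposition \ref{prop:ra81} and requires additional input from Ranicki's treatment of $L$-theory of $\zt$, essentially that the image of $W(\zt) \to W(\zt \to \Q(t))$ is detected by the signature function on $S^1$ together with the class at $t = 1$. Granted this, $[B] = [A_K(t)]$, so $\rho_{\zt}(K) \leq 2n = 2n(K)$, proving the lemma.
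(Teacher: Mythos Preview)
Your approach is essentially the same as the paper's: both form $B = A \oplus (-A(1))$ for a matrix $A$ of size $n(K)$ realizing the definition, and both aim to show that $B$ represents $[A_K(t)]$ in $W(\zt \to \Q(t))$. The difference lies only in how the final Witt-theoretic identification is made, and here the paper's route is cleaner than yours.

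The paper invokes directly the fact (citing \cite{Ran81}) that for any hermitian matrix $D(t)$ over $\zt$ with $\det D(t) = \pm 1$ one has $[D(t)] = [D(1)]$ in $W(\zt \to \Q(t))$. With this in hand, the assignment $A \mapsto [A \oplus -A(1)]$ is immediately seen to be invariant under each of the three moves in Proposition~\ref{prop:ra81}, hence depends only on $\lambda(A)$; applying it to $A$ and to $A_K(t)$ and using that $A_K(1)$ is metabolic gives $[A \oplus -A(1)] = [A_K(t)]$ directly. Your argument reaches the relation $[B] - [A_K(t)] = [D_2] - [D_1]$ with $D_1, D_2$ unimodular, and then tries to kill the right-hand side via the signature function on $S^1$ together with the class at $t=1$. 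But the signature information is redundant: the Ranicki fact above already gives $[D_i] = [D_i(1)]$, and you have established $[D_1(1)] = [D_2(1)]$ in $W(\Z)$, so $[D_1] = [D_2]$ follows without any appeal to signatures. In short, the ``additional input'' you flag is exactly the fact the paper cites, and once you state it in the form $[D(t)] = [D(1)]$ rather than as a detection statement, your detour through Lemma~\ref{lem:signetaa} and the signature function becomes unnecessary. Your proof is correct; it just does more work than needed at the end.
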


\begin{proof}
Recall that we   denote by $\MM$ the set of all hermitian matrices $A$ over $\zt$ such that $\det(A(1))=\pm 1$ and  we write  $A\sim B$ if $\l(A)$ and $\l(B)$ are isometric.
We now consider the map
\[ \ba{rcl} \MM&\to & W(\zt\to \Q(t)) \\
A(t)&\mapsto & A(t)\oplus -A(1).\ea \]
Note that it is well--known that given a hermitian matrix $D(t)$ over $\zt$ with $\det(D(t))=\pm 1$
the pairings $D(t)$ and $D(1)$ define the same element in $W(\zt\to \Q(t))$ (see \cite{Ran81} for details).
It now follows from Proposition \ref{prop:ra81} that the above map
descends to a map
\[ \MM/\sim \,\,\to \,W(\zt \to \Q(t)).\]
The lemma now follows immediately from the definitions.
\end{proof}

\begin{remark}
In \cite{Liv11} Livingston shows that $\rho(K)$ is completely determined by the Levine--Tristram signatures.
In an interesting twist  Livingston \cite[Section~3.1]{Liv11}  gives an example which shows that in general
$\rho(K)\geq \mu(K)$.
The invariant $\mu(K)$ is thus \emph{not} the optimal lower bound on the algebraic unknotting number which can be obtained from the Levine--Tristram signatures and the nullities.
\end{remark}

\subsection{The unknotting number one obstruction by Fogel--Murakami--Rickard}\label{section:fogelmurakami}

The following unknotting number one obstruction was proved by
H. Murakami \cite{Muk90} and Fogel \cite[p.~32]{Fo93} and it was already known to John Rickard \cite{Lic11}.

\begin{theorem}\label{thm1u1}\label{thm:u1}
Let $K$ be a knot and let $\eps \in \{-1,1\}$. If  $K$ can be turned into an Alexander polynomial one knot using one $\eps$--crossing change,
then  there exists a generator $g$ of $H_1(X(K);\zt)$
such that
\[ \l(g,g)=\frac{-\eps}{\Delta_K(t)}\in \Q(t)/\zt.\]
\end{theorem}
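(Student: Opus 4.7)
The plan is to invoke Theorem~\ref{thm1} directly with $u_++u_-=1$. Specifically, if $\eps=+1$ we set $u_+=1,\,u_-=0$; if $\eps=-1$ we set $u_+=0,\,u_-=1$. In either case Theorem~\ref{thm1} produces a $1\times 1$ hermitian matrix $A(t)=(a(t))$ over $\zt$ with $\l(A)\cong \l(K)$ and $A(1)=(-\eps)$.

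Next I would pin down $a(t)$ precisely. Since $A(t)$ is a $1\times 1$ presentation matrix for the Alexander module, that module is cyclic and its order ideal is $(a(t))=(\Delta_K(t))\subset\zt$. The unit group of $\zt$ is $\{\pm t^m:m\in\Z\}$, so $a(t)=\pm t^m\Delta_K(t)$ for some $m$. A direct calculation from the paper's formula $\Delta_K(t)=t^{-n}\det(Vt-V^t)$ on a $2n\times 2n$ Seifert matrix (using $\det(V^tt-V)=\det(Vt-V^t)$) gives the symmetry $\Delta_K(t^{-1})=\Delta_K(t)$. Combined with the hermitian condition $a(t^{-1})=a(t)$, this forces $t^m=t^{-m}$, hence $m=0$. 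Then $a(1)=\pm 1=-\eps$ fixes the sign, yielding
\[ a(t)=-\eps\,\Delta_K(t). \]

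To finish, evaluate the pairing $\l(A)$ on the canonical generator $1\in\zt/(a(t))$:
\[ \l(A)(1,1)\,=\,a(t)^{-1}\,=\,\frac{-\eps}{\Delta_K(t)}\in\Q(t)/\zt, \]
using $\eps^2=1$. The isometry $\l(A)\cong \l(K)$ transports $1$ to a generator $g\in H_1(X(K);\zt)$ with $\l(K)(g,g)=-\eps/\Delta_K(t)$, as required. There is no serious obstacle once Theorem~\ref{thm1} has been applied; the only delicate point is verifying that the unit relating $a(t)$ and $\Delta_K(t)$ must be $\pm1$ rather than $\pm t^m$ for some $m\ne 0$, and this is exactly what the symmetry of $\Delta_K$ and the hermitian constraint on the $1\times 1$ matrix provide.
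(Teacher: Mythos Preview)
Your proof is correct and follows essentially the same route as the paper's own argument: apply Theorem~\ref{thm1} in the $1\times 1$ case to obtain a symmetric polynomial $a(t)$ with $a(1)=-\eps$ presenting the Alexander module, identify $a(t)=-\eps\,\Delta_K(t)$, and read off the Blanchfield pairing on the canonical generator. The paper compresses your steps~2--4 into the single sentence ``since $p(t)$ represents the Alexander module and since $\Delta_K(1)=1$ it follows that $p(t)=-\eps\Delta_K(t)$''; your explicit use of the hermitian constraint together with the symmetry of $\Delta_K$ to rule out a stray $t^m$ is exactly the content hidden in that sentence.
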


We will now see that it is an almost immediate corollary to Theorem \ref{thm1}.

\begin{proof}
It follows from Theorem \ref{thm1} that $\l(K)\cong \l(p(t))$ for a polynomial $p(t)$ with $p(1)=-\eps$.
Since $p(t)$ represents the Alexander module and since $\Delta_K(1)=1$ it follows that $p(t)=-\eps\Delta_K(t)$.
In particular there exists a generator $g$
 of $H_1(X(K);\zt)$
such that
\[ \l(g,g)=\frac{1}{-\eps\Delta_K(t)}=\frac{-\eps}{\Delta_K(t)}\in \Q(t)/\zt.\]
\end{proof}

\subsection{The unknotting number one obstruction by Lickorish} \label{section:lickorish}

The following theorem
was proved by Lickorish \cite{Lic85} (see also \cite[Proposition~2.1]{CoL86}).

\begin{theorem}\label{thm:lickorish}
Let $K$ be a knot and let $\eps\in \{-1,1\}$. If $K$ can be unknotted using one $\eps$--crossing change,
then  there exists a generator $h$ of $ H_1(\Sigma(K);\Z)$ such that
\[ l(h,h)=\frac{-2\eps }{\det(K)}\in \Q/\Z.\]
\end{theorem}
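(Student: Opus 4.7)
The plan is to deduce Lickorish's theorem almost immediately from the Fogel--Murakami--Rickard obstruction (Theorem \ref{thm:u1}), together with the translation between the Blanchfield pairing and the linking pairing on the $2$--fold branched cover given by Lemma \ref{lem:atlinkingform}. A single unknotting $\eps$--crossing change in particular turns $K$ into an Alexander polynomial one knot, so Theorem \ref{thm:u1} supplies a generator $g$ of the Alexander module $H_1(X(K);\zt)$ with $\l(K)(g,g)=-\eps/\Delta_K(t)$. Since $g$ generates, the Alexander module is cyclic and the entire Blanchfield pairing $\l(K)$ is isometric to $\l(A(t))$ where $A(t)$ is the $1\times 1$ hermitian matrix $-\eps\Delta_K(t)$.

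I would then feed this $A(t)$ into Lemma \ref{lem:atlinkingform}. A direct evaluation gives $A(-1)=-\eps\Delta_K(-1)=-\eps\det(K)$, and the lemma yields
\[ 2\,l(K)\;\cong\;l\bigl((-\eps\det(K))\bigr).\]
This isometry forces $H_1(\S(K);\Z)$ to be cyclic of order $|\det(K)|$ and produces a generator $h_0$ satisfying $2\,l(K)(h_0,h_0)=-\eps/\det(K)\in\Q/\Z$.

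The last step is a cosmetic adjustment of the generator. Using that the knot determinant is odd, so that multiplication by $2$ is an automorphism of $H_1(\S(K);\Z)$, I would set $h:=2h_0$, which is again a generator. Bilinearity then gives
\[ l(K)(h,h)=4\,l(K)(h_0,h_0)=2\cdot\bigl(2\,l(K)(h_0,h_0)\bigr)=\frac{-2\eps}{\det(K)},\]
which is the desired Lickorish formula. The only point requiring any care is this final rescaling by $2$; everything else is a direct translation from the Blanchfield language of Theorem \ref{thm:u1} into the linking-pairing language via Lemma \ref{lem:atlinkingform}. Thus I do not foresee any real obstacle beyond tracking the signs and the factor of $2$; the conceptual content is that the Lickorish obstruction is simply the ``$t=-1$ shadow'' of the Fogel--Murakami--Rickard obstruction.
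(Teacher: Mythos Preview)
Your proposal is correct and is essentially the same argument the paper gives: the paper does not prove Theorem \ref{thm:lickorish} from scratch (it cites \cite{Lic85}), but it derives the Lickorish conclusion from the Fogel--Murakami--Rickard obstruction in exactly your way, via Theorem \ref{thm:u1} followed by the proof of Theorem \ref{thm:n1lick}, which applies Lemma \ref{lem:atlinkingform} to the $1\times 1$ matrix $\eta\Delta_K(t)$ and then doubles the generator using that $\det(K)$ is odd. Your write-up matches this step for step, including the final rescaling $h=2h_0$.
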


We will now show that if a knot satisfies the conclusion of Theorem \ref{thm:u1},
then the Lickorish obstruction vanishes.
This shows in particular that the Lickorish obstruction gives in fact an obstruction to the \emph{algebraic} unknotting number being equal to one.

\begin{theorem}\label{thm:n1lick}\label{thm:comparison2}
Let $K$ be a knot and let $\eta \in \{-1,1\}$.
Suppose there exists
 a generator $k$ of $H_1(X(K);\zt)$
such that
\[ \l(k,k)=\frac{\eta}{\Delta_K(t)}\in \Q(t)/\zt.\]
 Then  there exists a generator $h$ of $ H_1(\Sigma(K);\Z)$ such that $l(h,h)=\frac{2\eta }{\det(K)}\in \Q/\Z$.
\end{theorem}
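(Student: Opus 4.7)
The strategy is to reduce the claim to Lemma~\ref{lem:atlinkingform} by representing the Blanchfield pairing with a $1\times 1$ matrix and then evaluating at $t=-1$. The main work is to bookkeep a factor of $2$ correctly.

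First I would use the hypothesis to pin down the Alexander module. Since $k$ generates $H_1(X(K);\zt)$ and $\lambda(K)$ is non-singular, any $p(t)\in\zt$ annihilating $k$ must satisfy $p(t)\cdot \eta/\Delta_K(t)\in\zt$, which forces $\Delta_K(t)\mid p(t)$; conversely $\Delta_K(t)$ annihilates the Alexander module. Hence $H_1(X(K);\zt)\cong \zt/(\Delta_K(t))$, and the $1\times 1$ hermitian matrix $A(t):=(\eta\Delta_K(t))$ satisfies $\lambda(A(t))\cong \lambda(K)$ via the isomorphism sending the generator $1$ to $k$. Since $A(1)=\eta\Delta_K(1)=\eta\in\{\pm1\}$, this $A(t)$ lies in the class $\MM$ to which Lemma~\ref{lem:atlinkingform} applies.

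Next I would apply Lemma~\ref{lem:atlinkingform} to deduce $l(A(-1))\cong 2\,l(K)$. A direct computation from the definitions of $\Delta_K$ and $\det(K)$ gives $\Delta_K(-1)=\det(K)$, so $A(-1)=(\eta\det(K))$ and $l(A(-1))$ is the linking pairing on $\Z/\det(K)\Z$ (the sign $\eta$ being absorbed into the generator) with value $1/(\eta\det(K))=\eta/\det(K)\in\Q/\Z$ on the generator $1$. Transporting across the isometry $l(A(-1))\cong 2\,l(K)$ yields a generator $g$ of $H_1(\Sigma(K);\Z)$ with
\[ 2\,l(g,g) \,=\, \frac{\eta}{\det(K)} \,\in\, \Q/\Z. \]
In particular $H_1(\Sigma(K);\Z)$ is cyclic of order $|\det(K)|$.

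Finally I would get the factor of $2$ onto the numerator by changing generator. Since the knot determinant is odd, $2$ is a unit modulo $\det(K)$, so $h:=2g$ is also a generator of $H_1(\Sigma(K);\Z)$, and
\[ l(h,h) \,=\, 4\,l(g,g) \,=\, 2\cdot\bigl(2\,l(g,g)\bigr) \,=\, \frac{2\eta}{\det(K)} \,\in\, \Q/\Z, \]
as required. There is no genuine obstacle; once the hypothesis is translated into the $1\times 1$ matrix $A(t)$, Lemma~\ref{lem:atlinkingform} does all the work, and the only subtle point is the doubling trick $h=2g$, which rests on the oddness of $\det(K)$.
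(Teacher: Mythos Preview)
Your proof is correct and follows essentially the same route as the paper's own argument: both recognize that the hypothesis amounts to $\lambda(K)\cong\lambda(\eta\Delta_K(t))$, apply Lemma~\ref{lem:atlinkingform} to obtain a generator $g$ with $2\,l(g,g)=\eta/\det(K)$, and then use the doubling trick $h=2g$, which works because $\det(K)$ is odd. Your write-up is slightly more explicit in justifying why the annihilator of $k$ is exactly $(\Delta_K(t))$, but the structure is identical.
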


\begin{proof}
Suppose there exists
 a generator $k$ of $H_1(X(K);\zt)$
such that
\[ \l(k,k)=\frac{\eta}{\Delta_K(t)}=\frac{1}{\eta \Delta_K(t)}\in \Q(t)/\zt\]
for some $\eta\in \{-1,1\}$.
This is equivalent to saying that $\l$ is isometric to $\l(\eta\Delta_K(t))$.
 It follows from Lemma \ref{lem:atlinkingform}  that
\[ 2l(K)\cong  l(\eta\Delta(-1))=l(\eta \det(K))\in \Q/\Z.\]
This means that there exists a generator $g$ for $H_1(\Sigma(K);\Z)$
such that
\[ 2l(g,g)=\frac{\eta}{\det(K)}=\frac{1}{\eta \det(K)}\in \Q/\Z.\] Since $\det(K)$ is an odd number
it follows that $k=2g$ is also a generator for $H_1(\Sigma(K);\Z)$, and it is easy to see that it has the required properties.
\end{proof}

\begin{remarks}
\bn
\item We could also easily have deduced Theorem \ref{thm:n1lick} from Theorem \ref{thm:representlk}. Put differently,
the Lickorish obstruction is precisely the obstruction of Theorem \ref{thm:representlk} to $n(K)=1$.
\item
Stoimenow \cite[p.~763~and~Conjecture~7.4]{St04} conjectures that the Lickorish obstruction contains the obstructions to
$u(K)=1$ obtained from the Jones polynomial which was found by
Miyazawa \cite{Mi98}, Traczyk \cite{Tra99}  and Stoimenow \cite{St04}.
\en
\end{remarks}

\subsection{The unknotting number one obstructions by Jabuka} \label{section:jabuka}

In the following we denote by $W(\Q)$ the Witt group of non-singular bilinear symmetric pairings over $\Q$.
Note that we can think of $W(\Q)$ also as the Witt group of symmetric matrices over $\Q$ with non-zero determinant. We refer to \cite[Section~2]{Ja09}, \cite{HM73} and \cite{Ran81} for details.
Given a knot $K$  Jabuka \cite{Ja09} denotes
by $\varphi(K)$ the element in the Witt group $W(\Q)$ defined by $V_K+\vt_K$.
The following is now Jabuka's obstruction to $u(K)=1$:

\begin{theorem}\label{thm:jabuka}
Let $K$ be a knot and let $\eps \in \{-1,1\}$. If $K$ can be unknotted using one $\eps$--crossing change,
then
$\varphi(K)$ is represented by the diagonal matrix with entries $2\eps $ and $-2\eps\det(K)$.
\end{theorem}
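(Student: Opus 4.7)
My plan is to deduce Jabuka's identity by comparing two matrix representatives of $\l(K)$ --- the $1\times 1$ matrix furnished by Theorem~\ref{thm:u1} and Kearton's matrix $A_K(t)$ from Section~\ref{section:seifertbf} --- via a $W(\Q)$-valued invariant of the Blanchfield pairing, and then adjusting by multiplication by the unit $\langle 2\rangle$ in the Witt ring to recover $\varphi(K)=[V+V^t]$.

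By Theorem~\ref{thm:u1}, $\l(K)\cong \l(p(t))$ for the $1\times 1$ hermitian matrix $p(t):=-\eps\,\Delta_K(t)$, which satisfies $p(1)=-\eps$. The first step is to set up the invariant
\[
\Phi(A):=[A(-1)]-[A(1)]\in W(\Q),
\]
defined on those $A\in \MM$ with $\det A(-1)\ne 0$. Well-definedness on isometry classes of the Blanchfield pairing is verified in parallel with the $W(\zt\to\Q(t))$-valued invariant of Section~\ref{section:livingston}. Applying Proposition~\ref{prop:ra81}, a move~(1) changes $A(\pm 1)$ by a $\Z$-invertible base change (since $\det P=\pm 1$ specializes at $t=\pm 1$ to $\pm 1$), so $\Phi$ is unchanged. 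A move~(2) or~(3) contributes a correction $[D(-1)]-[D(1)]$ for some hermitian $D(t)$ over $\zt$ with $\det D(t)=\pm 1$; the Ranicki-type statement invoked in Section~\ref{section:livingston} (namely that $[D(t)]=[D(1)]$ in $W(\zt\to\Q(t))$) says $D(t)\oplus -D(1)$ is stably hyperbolic over $\zt$, and evaluation at $t=-1$ --- legitimate since $\det D(t)=\pm 1$ forces $\det D(-1)=\pm 1$ --- yields $[D(-1)\oplus -D(1)]=0$ in $W(\Q)$, so the correction vanishes and $\Phi$ descends.

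With $\Phi$ in hand I would compute $\Phi(A_K)$ and $\Phi(p)$ and equate. On the Seifert side, equation~\eqref{equ:aktlt} at $t=-1$ reads $Q\,A_K(-1)\,Q^t=2(V+V^t)$ with $Q=\diag(2I_k,I_k)\in \GL_{2k}(\Q)$, and Lemma~\ref{lem:diag1} (its proof) shows $A_K(1)$ is $\Q$-congruent to $I_k\oplus(-I_k)$; hence $\Phi(A_K)=[2(V+V^t)]-0=[2(V+V^t)]$ in $W(\Q)$. On the $p$ side, $\Phi(p)=[-\eps\det(K)]-[-\eps]=[\eps,-\eps\det(K)]$. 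Equating gives $[2(V+V^t)]=[\eps,-\eps\det(K)]$ in $W(\Q)$. Multiplying through by the involutive unit $\langle 2\rangle\in W(\Q)$ (which satisfies $\langle 2\rangle\otimes\langle 2a\rangle=\langle a\rangle$ since $4$ is a rational square) converts the left side into $[V+V^t]=\varphi(K)$ and the right side into $[2\eps,-2\eps\det(K)]$, which is Jabuka's diagonal form.

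The main obstacle is the first paragraph, i.e.\ verifying that $\Phi$ is an invariant of the Blanchfield pairing with values in $W(\Q)$. The underlying Ranicki-style fact --- that a hermitian matrix over $\zt$ with $\det=\pm 1$ is Witt-equivalent to its value at $t=1$ --- is already used in Section~\ref{section:livingston}, so the work is really just composing that fact with evaluation $t\mapsto -1$; the only mild subtlety is that this evaluation is not defined on all of $W(\zt\to\Q(t))$, but only on classes represented by matrices $B(t)$ with $\det B(-1)\ne 0$, a condition automatic for the matrices ($A_K(t)$, $p(t)$, and the constant-determinant $D(t)$) appearing in our sequence of moves. Once this is cleared, the rest of the argument is a short calculation.
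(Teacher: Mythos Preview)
Your argument is correct. Note, however, that in the paper Theorem~\ref{thm:jabuka} is not proved directly; it is quoted from \cite{Ja09}. What the paper does prove is Theorem~\ref{thm:lickorishjabuka}, which says that the Lickorish condition together with $\sigma(K)\in\{0,2\eps\}$ already forces Jabuka's conclusion. Combined with Theorems~\ref{thm:u1}, \ref{thm:n1lick} and \ref{thm:atsignatures}, this yields an indirect proof of Theorem~\ref{thm:jabuka}: one $\eps$--crossing change $\Rightarrow$ Fogel--Murakami--Rickard $\Rightarrow$ Lickorish, and separately one $\eps$--crossing change $\Rightarrow$ $\sigma(K)\in\{0,2\eps\}$; then Theorem~\ref{thm:lickorishjabuka} applies.

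Your route is genuinely different and more direct. The paper's path factors through the linking pairing $l(K)$ (via Lemma~\ref{lem:atlinkingform}) and then climbs back up to $W(\Q)$ using the exact sequence $0\to W(\Z)\to W(\Q)\to W(\Z,\Q)\to 0$ together with the signature isomorphism $W(\Z)\cong\Z$ and Lemma~\ref{lem:detsign}. You bypass the linking pairing entirely by working with the $W(\Q)$--valued invariant $\Phi(A)=[A(-1)]-[A(1)]$ on $\MM$, computing it on $A_K(t)$ and on the $1\times 1$ matrix $(-\eps\Delta_K(t))$, and then twisting by $\langle 2\rangle$. This is cleaner and makes transparent that Jabuka's obstruction is really just the ``$t=-1$ minus $t=1$'' specialization of the Blanchfield--level data.

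One small simplification of your well--definedness step: rather than invoking stable hyperbolicity over $\zt$ and then worrying about whether evaluation at $t=-1$ is defined on $W(\zt\to\Q(t))$, you can argue directly. For a hermitian $D(t)$ with $\det D(t)=\pm 1$, both $D(1)$ and $D(-1)$ are unimodular integral symmetric matrices, hence lie in the image of $W(\Z)\hookrightarrow W(\Q)$; since $\sign\colon W(\Z)\to\Z$ is an isomorphism, $[D(-1)]=[D(1)]$ in $W(\Q)$ reduces to $\sign D(-1)=\sign D(1)$, which is exactly the signature--constancy argument used in the proof of Lemma~\ref{lem:signetaa}. This removes the ``mild subtlety'' you flag.
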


\begin{remark}
The statement of Theorem \ref{thm:jabuka} is precisely the statement of \cite[Corollary~1.2]{Ja09},
the only difference is that we view the determinant of a knot as a signed invariant,
i.e. we write $\det(K)=\det(V+V^t)$, whereas Jabuka uses $|\det(V+V^+)|$ as the definition of the determinant of a knot.
 \end{remark}

Note that if a knot $K$ can be turned into an Alexander polynomial one knot using one $\eps$--crossing change,
then it follows from Theorem \ref{thm:atsignatures} that $\s(K)\in \{0,2\eps\}$.
The following result thus shows   that the Lickorish obstruction together with the signature obstruction  subsumes the Jabuka obstruction.

\begin{theorem}\label{thm:lickorishjabuka}
Let $K$ be a knot and let $\eps\in \{-1,1\}$.
If $\s(K)\in \{0,2\eps\}$ and if  there exists a generator $h$ of $ H_1(\Sigma(K);\Z)$ such that $l(h,h)=\frac{-2\eps }{\det(K)}\in \Q/\Z$,
 then the conclusion of Theorem \ref{thm:jabuka} also holds.
\end{theorem}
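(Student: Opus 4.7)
The plan is to show that $\varphi(K) = [V+V^t]$ and $[\diag(2\eps, -2\eps\det(K))]$ coincide in $W(\Q)$ by checking separately their signatures and their images under the discriminant map into the Witt group of linking forms. The key tool is the classical Milnor devissage
\[ 0 \to W(\Z) \to W(\Q) \to W(\Q/\Z) \to 0, \]
where $W(\Z) \cong \Z$ is detected by the signature and the right map sends a non-singular integral symmetric form $A$ to its discriminant linking form on $\Z^n/A\Z^n$. Consequently, two elements of $W(\Q)$ are equal iff they have the same signature and the same image in $W(\Q/\Z)$.

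First I would verify that the two signatures agree. The signature of $V+V^t$ is $\s(K)$, while the signature of $D:=\diag(2\eps,-2\eps\det(K))$ is read off from the signs of its diagonal entries: it equals $\eps(1-\sign(\det(K)))$ in the two relevant cases. To reconcile this with $\s(K)\in\{0,2\eps\}$, I would invoke the classical identity $\sign(\det(K))=(-1)^{\s(K)/2}$. This follows from counting positive eigenvalues of $V+V^t$ (there are $(2n+\s(K))/2$ of them) and using the relation $\det(K)=(-1)^n\det(V+V^t)$. A case-by-case check then shows that $\sign(D)=\s(K)$ in each of the four cases allowed by the hypothesis.

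The bulk of the work is the comparison of discriminant forms in $W(\Q/\Z)$. The discriminant form of $V+V^t$ is (classically, cf.~\cite{Go78}) precisely $l(K)$. For $D$, the discriminant form splits as an orthogonal direct sum of linking forms on $\Z/2$ and on $\Z/(2|\det(K)|)$; since $|\det(K)|$ is odd, the latter further splits by CRT into its 2-primary and odd parts. Computing $D^{-1}=\diag(\eps/2,-1/(2\eps\det(K)))$ directly, one finds that each of the two $\Z/2$ summands carries the pairing $\langle 1/2\rangle$ (using $\pm 1/2\equiv 1/2\pmod\Z$), while the odd summand is the cyclic form on $\Z/|\det(K)|$ whose generator (namely $2$ times the generator of $\Z/(2|\det(K)|)$) has self-pairing $-2\eps/\det(K)\pmod\Z$. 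The 2-primary part $\langle 1/2\rangle\oplus\langle 1/2\rangle$ on $(\Z/2)^2$ is Witt-trivial, because the diagonal subgroup $\{(x,x)\}$ is a Lagrangian (self-pairing $1/2+1/2=0\in\Q/\Z$). Hence the image of $D$ in $W(\Q/\Z)$ is represented by this cyclic form on $\Z/|\det(K)|$, which is exactly $l(K)$ by the Lickorish hypothesis on $h$. Thus the two discriminant form classes agree, and the devissage yields $\varphi(K)=[D]$ in $W(\Q)$.

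The main technical obstacle is the 2-primary bookkeeping: one must carefully split the cyclic summand $\Z/(2|\det(K)|)$ using CRT, track signs modulo $\Z$, and verify Witt-triviality of the resulting $(\Z/2)^2$ piece. Once this is set up, matching the odd part with $l(K)$ is immediate from the Lickorish condition, and the signature equality reduces to the classical parity relation between $\s(K)$ and $\sign(\det(K))$.
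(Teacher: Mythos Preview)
Your proposal is correct and follows essentially the same approach as the paper: both use the exact sequence $0\to W(\Z)\to W(\Q)\to W(\Q/\Z)\to 0$ together with the parity relation $\sign(\det(K))=(-1)^{\s(K)/2}$, and both reduce the Witt comparison at the odd part to the Lickorish hypothesis via the Chinese Remainder decomposition of $\Z/(2|\det(K)|)$. The only cosmetic difference is that the paper shows the \emph{difference} class $(V+V^t)\oplus(2\eps/d)\oplus(-2\eps)$ vanishes (building an explicit CRT isometry $l(-2\eps)\oplus l(K)\cong l(-2\eps d)$), whereas you compare $[V+V^t]$ and $[\diag(2\eps,-2\eps\det(K))]$ directly by splitting the discriminant of the latter and killing its $2$--primary part as a metabolic $(\Z/2)^2$; these are two phrasings of the same computation.
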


\begin{remark}
  Jabuka \cite{Ja09} showed that in general the Lickorish obstruction is stronger than the obstruction provided by Theorem \ref{thm:jabuka}, e.g. the Jabuka obstruction vanishes for $K=8_8$, but the Lickorish obstruction detects that $u(8_8)\geq 2$.
\end{remark}

In our proof of Theorem \ref{thm:lickorishjabuka} we will need  the following well--known elementary lemma:

\begin{lemma}\label{lem:detsign}
Let $K$ be a knot, then
\[ \sign(\Delta_K(-1))=(-1)^{\s(K)/2}.\]
\end{lemma}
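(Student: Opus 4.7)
The plan is to unwind the definition of $\Delta_K(-1)$ in terms of a $2n\times 2n$ Seifert matrix $V$, reduce to evaluating the sign of $\det(V+V^t)$, and then use the elementary linear-algebra fact that for a nonsingular symmetric real matrix the sign of the determinant is determined by its size and signature.

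First I would substitute $t=-1$ into the formula $\Delta_K(t)=t^{-n}\det(Vt-V^t)$ recorded in the introduction. This gives
\[
\Delta_K(-1)=(-1)^{-n}\det(-V-V^t)=(-1)^{-n}(-1)^{2n}\det(V+V^t)=(-1)^n\det(V+V^t).
\]
In particular $\Delta_K(-1)$ and $\det(V+V^t)$ differ by the sign $(-1)^n$; note that this is consistent with the definition of the signed knot determinant $\det(K)=(-1)^n\det(V+V^t)$ used in the paper.

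Next I would compute the sign of $\det(V+V^t)$. Because $V+V^t$ is a nonsingular symmetric $2n\times 2n$ integer matrix, it is diagonalizable over $\R$ with $p$ positive and $q$ negative eigenvalues satisfying $p+q=2n$ and $p-q=\s(K)$; solving gives $q=n-\s(K)/2$. The determinant is the product of the eigenvalues, so
\[
\sign\bigl(\det(V+V^t)\bigr)=(-1)^{q}=(-1)^{n-\s(K)/2}.
\]

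Combining these two displays yields
\[
\sign(\Delta_K(-1))=(-1)^n\cdot(-1)^{n-\s(K)/2}=(-1)^{2n-\s(K)/2}=(-1)^{\s(K)/2},
\]
which is the claim. No step is really an obstacle here; the only thing to be careful about is the parity bookkeeping (the $(-1)^{-n}$ from the normalization of $\Delta_K$ cancels exactly against $(-1)^n$ from the count of negative eigenvalues), and the fact that $\Delta_K(-1)=\det(K)\neq 0$ since $\det(K)$ is an odd integer, which guarantees that $V+V^t$ is nonsingular so the signature count is valid.
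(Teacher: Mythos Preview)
Your proof is correct and follows essentially the same route as the paper: compute $\Delta_K(-1)$ from the Seifert matrix formula, express $\sign(\det(V+V^t))$ as $(-1)^q$ via the eigenvalue count, and combine. The only cosmetic difference is that the paper first reduces without loss of generality to a $4k\times 4k$ Seifert matrix so that the normalization factor $(-1)^{-2k}$ disappears, whereas you keep the general $2n\times 2n$ size and track the extra $(-1)^n$ explicitly, observing that it cancels against the $(-1)^n$ coming from $q=n-\sigma(K)/2$.
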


We provide a proof for the reader's convenience.

\begin{proof}
Let $V$ be a Seifert matrix for $K$. Without loss of generality we can assume that $V$ is a $4k\times 4k$--matrix.
We denote by $p$ the number of positive eigenvalues of $V+\vt$ and we denote by $n$ the number of negative eigenvalues of $V+\vt$. It follows that
\[ \ba{rcl} \sign(\Delta_K(-1))&=&\sign((-1)^{-2k}\det(-V-\vt))\\
&=&\sign(\det(V+\vt))=(-1)^n=(-1)^{\frac{n-p}{2}+\frac{n+p}{2}}\\
&=&(-1)^{-\s(K)/2}\cdot (-1)^{2k}\\
&=&(-1)^{\s(K)/2}.\ea \]
\end{proof}

\begin{proof}[Proof of Theorem \ref{thm:lickorishjabuka}]
We will use the notation of the proof of Theorem \ref{thm:n1lick}.
We denote by $W(\Z)$ the Witt group of non-singular pairings over $\Z$.
Note that the signature defines an isomorphism
\be \label{equ:signiso} \sign\colon W(\Z)\to \Z \ee
We refer to \cite{HM73} for details.
We say that a linking pairing $H\times H\to \Q/\Z$  is  metabolic if there exists a subspace $P\subset H$ with $P=P^\perp$.
We  denote by $W(\Z,\Q)$ the Witt group of linking pairings modulo metabolic pairings.

By definition  any pairing in $W(\Q)$ can be represented by a rational matrix. After multiplying by a sufficiently
large square we can also represent a given pairing by an integral symmetric $n\times n$--matrix $A$. To such a matrix we then associate the linking pairing $l(A)$.
Note that $l(-A)=-l(A)$ in the group $W(\Z,\Q)$. Also note that the above assignment descends to a well-defined map $W(\Q)\to W(\Z,\Q)$ and it is well-known that the sequence
\be \label{equ:seswitt} 0\to W(\Z)\to W(\Q)\to W(\Z,\Q)\to 0\ee
is exact. We refer to \cite[p.~90]{HM73} and to \cite{Ran81} for details.

Now let $K$ be a knot and $V$ a Seifert matrix for $K$. Recall (see e.g. \cite{Go78})
 that the linking pairing $l=l(K)$ is isometric to  $l( V+\vt)$. Now suppose we have $\eps\in \{-1,1\}$
 such that the following hold:
 \bn
 \item  $\s(K)\in \{0,2\eps\}$,
  \item  there exists a generator $h$ of $ H_1(\Sigma(K);\Z)$ such that $l(h,h)=\frac{-2\eps }{d}\in \Q/\Z$, where we write $d=\det(K)$.
 \en
We can now prove the following claim.

\begin{claim}
The element in $W(\Q)$ represented by
$(V+\vt)\oplus (2\eps/d)\oplus (-2\eps)$
gets sent to the trivial element in $W(\Z,\Q)$.
\end{claim}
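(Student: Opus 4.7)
The plan is to identify the image of the element $(V+V^t)\oplus (2\varepsilon/d) \oplus (-2\varepsilon)$ under the boundary map $W(\Q)\to W(\Z,\Q)$ with an explicit direct sum of three linking forms, and then exhibit a Lagrangian in each $p$--primary piece. Let me write $d := \det(K)$. The hypothesis that there is a \emph{generator} $h$ of $H_1(\Sigma(K);\Z)$ with $l(h,h) = -2\varepsilon/d$ in particular forces $H_1(\Sigma(K);\Z)$ to be the cyclic group $\Z/d\Z$ (which is consistent with the Montesinos description of $\Sigma(K)$ for a knot with $u(K)=1$), so that $l(K)$ is, as a linking form, isometric to the $1\times 1$ pairing $(v,w)\mapsto -2\varepsilon vw/d$ on $\Z/d\Z$.

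Next I will rewrite each summand as an integral $1\times 1$ form, so that the boundary map is easy to compute. Since multiplication by $d^{-2}$ is a square, we have $(2\varepsilon/d) = (2\varepsilon d)$ in $W(\Q)$. The image of the class of the sum in $W(\Z,\Q)$ is therefore represented by the direct sum of linking forms
\[ l(V+V^t)\;\oplus\; l(2\varepsilon d)\;\oplus\; l(-2\varepsilon), \]
where $l(V+V^t)\cong l(K)$ lives on $\Z/d$, the middle piece $l(2\varepsilon d)$ lives on $\Z/(2d)\Z$ and the last piece on $\Z/2\Z$.

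The main step is a Chinese Remainder Theorem decomposition of the middle summand. Since $d$ is odd, $\Z/(2d)\Z\cong \Z/2\Z\oplus\Z/d\Z$, and a short direct computation shows that, under this decomposition, $l(2\varepsilon d)$ splits orthogonally as the $1\times 1$ linking form $(1,1)\mapsto 1/2$ on $\Z/2\Z$ plus the $1\times 1$ form $(1,1)\mapsto 2\varepsilon/d$ on $\Z/d\Z$. After this regrouping the total linking form breaks into an odd piece on $\Z/d\oplus\Z/d$ with diagonal entries $-2\varepsilon/d$ and $+2\varepsilon/d$, and a $2$--primary piece on $\Z/2\oplus\Z/2$ with both diagonal entries equal to $1/2$.

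Finally I will exhibit a Lagrangian in each piece. On the odd part, the anti-diagonal subgroup $\{(x,x)\colon x\in\Z/d\}$ is self--annihilating, since $x^2(-2\varepsilon/d)+x^2(2\varepsilon/d)=0$, and it has the right order $d$ to be a Lagrangian. On the $2$--primary piece, the subgroup generated by $(1,1)$ annihilates itself, because $1/2+1/2=0$ in $\Q/\Z$, and it is its own orthogonal complement in $\Z/2\oplus\Z/2$. Thus both pieces are metabolic, hence so is the total linking form, which proves the claim. The main subtlety I expect to handle carefully is precisely the $2$--primary part: the summand $(-2\varepsilon)$ has even determinant and so contributes a nontrivial class to $W(\F_2)$, and the whole point is that the $2$--primary contribution of $(2\varepsilon d)$ exactly cancels it.
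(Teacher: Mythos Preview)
Your proof is correct and follows essentially the same approach as the paper. Both arguments hinge on the Chinese Remainder decomposition $\Z/2d\cong \Z/2\oplus\Z/d$: the paper uses it to combine $l(-2\varepsilon)\oplus l(K)$ into $l(-2\varepsilon d)$ (so the total becomes $l(-2\varepsilon d)\oplus l(2\varepsilon d)$, which is manifestly metabolic), whereas you apply it in the reverse direction, splitting $l(2\varepsilon d)$ and then exhibiting Lagrangians in the odd and $2$--primary pieces separately.
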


In the following we identify the linking pairing $l(K)$ with the pairing
on $\Z/d$ given by $l(a,b)=\frac{-2\eps ab}{d}\in \Q/\Z$.
Now recall that the image of $(V+\vt)\oplus (2\eps/d)\oplus (-2\eps)$
is represented by the pairing $l(K)\oplus l(2\eps d)\oplus l( -2\eps)$.
We consider the map
\[ \ba{rcl} \Z/2 \oplus \Z/d &\to & \Z/2d \\
(x,y)&\mapsto & xd+2y.\ea \]
It is straightforward to verify that this map induces an isometry
\[ l(- 2\eps )\oplus l(K) \to l(-2\eps d).\]
Put differently, the pairing $l(K)\oplus l( 2\eps/ d)\oplus l(-2\eps)$
represents the trivial element in $W(\Z,\Q)$. This concludes the proof of the claim.

It follows from Lemma \ref{lem:detsign} and from $\s(K)\in \{0,2\eps\}$
that the signature of the matrix $(V+\vt)\oplus (2\eps/d)\oplus (-2\eps)$ is zero.
It now follows from the claim, the short exact sequence (\ref{equ:seswitt}) and (\ref{equ:signiso}) that
$(V+\vt)\oplus (2\eps/d)\oplus (-2\eps)$ represents the zero element in $W(\Q)$.
The fact that $(2\eps/d)=(2\eps\cdot d)\in W(\Q)$ now completes the proof of the theorem.
\end{proof}

\subsection{The unknotting number two obstruction by Stoimenow} \label{section:stoimenow}

Stoimenow proved the following theorem.

\begin{theorem}[Stoimenow, \expandafter{\cite[Theorem~5.2]{St04}}]
Let $K$ be a knot with  $|\s(K)|=4$  such that $\det(K)$ is a square.
If $\det(K)$ has no divisors of the form $4r+3$, then $u(K)>2$.
\end{theorem}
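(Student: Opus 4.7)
The plan is to prove the stronger assertion $n(K) > 2$, from which the inequalities $u(K) \geq u_a(K) \geq n(K)$ supplied by Theorem~\ref{thm1} immediately yield $u(K) > 2$. Since $|\sigma(K)| = 4$, the standard signature bound coming from Theorem~\ref{thm:atsignatures} (evaluating the two maxima defining $\mu(K)$ at $z = \pm 1$) gives $n(K) \geq \mu(K) \geq \tfrac{1}{2}|\sigma(K)| = 2$, so the only case left to rule out is $n(K) = 2$.

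Assuming $n(K) = 2$ and writing $\sigma(K) = 4\epsilon$ with $\epsilon \in \{-1, 1\}$, I would invoke Theorem~\ref{thm:representlk} to produce a $2 \times 2$ symmetric integer matrix $A$ which is $\epsilon$-definite, satisfies $|\det(A)| = |\det(K)|$, reduces to the identity modulo $2$, and has diagonal entries congruent to $-\epsilon$ modulo $4$. Replacing $A$ by $-A$ in the case $\epsilon = -1$ preserves $\det(A) > 0$ and the mod-$2$ reduction, while sending the diagonal congruence class from $1$ to $3$ modulo $4$, so I may assume $A = \bigl(\begin{smallmatrix} a & b \\ b & c \end{smallmatrix}\bigr)$ is positive definite with $a, c \equiv 3 \pmod{4}$, with $b$ even, and with $ac - b^2 = \det(A) = d^2$, where $d$ is a positive integer chosen so that $d^2 = |\det(K)|$ (which exists by the squareness hypothesis).

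The finishing step is a short arithmetic observation. Since $a \equiv 3 \pmod{4}$, the integer $a$ must admit at least one prime factor $p \equiv 3 \pmod{4}$, for otherwise every prime divisor of $a$ would be $\equiv 1 \pmod{4}$ and then $a$ itself would be $\equiv 1 \pmod{4}$. Such a prime divides $ac = b^2 + d^2$, and by the classical fact that a prime $\equiv 3 \pmod{4}$ dividing a sum of two squares must divide each summand, we obtain $p \mid d$, whence $p \mid d^2 = |\det(K)|$. This exhibits a divisor of $\det(K)$ of the form $4r+3$, contradicting the hypothesis.

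The main obstacle is not the number theory, which is immediate, but the sign and congruence bookkeeping involved in invoking Theorem~\ref{thm:representlk} and carrying out the reduction $A \mapsto -A$; one must verify that this reduction preserves every property needed downstream. Notably, the refined linking-pairing condition $l(A) \cong 2 l(K)$ supplied by Theorem~\ref{thm:representlk} plays no role in this argument: the definiteness, determinant, and mod-$4$ diagonal conditions alone are already enough to force the contradiction.
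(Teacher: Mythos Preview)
Your argument is correct and follows essentially the same route as the paper's Theorem~\ref{thm:stoimenow}: reduce to $n(K)=2$, invoke Theorem~\ref{thm:representlk} to obtain an $\epsilon$-definite $2\times 2$ matrix with diagonal entries $\equiv 3\pmod 4$ and even off-diagonal entry, then use the elementary fact that a prime $p\equiv 3\pmod 4$ dividing a sum of two squares divides each summand. The only cosmetic differences are that the paper says ``without loss of generality $\sigma(K)=4$'' where you perform the explicit substitution $A\mapsto -A$, and the paper phrases the last step as a short contradiction (``$-1$ is not a square mod $p$'') rather than citing the sum-of-two-squares lemma directly.
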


Our next theorem shows that the $n(K)$ obstruction contains the Stoimenow obstruction.
This shows in particular that the Stoimenow obstruction is an obstruction to the algebraic unknotting number being equal to two.
The latter result can also be shown by reading carefully the original proof.

\begin{theorem}\label{thm:stoimenow}
Let $K$ be a knot  with  $|\s(K)|=4$ such that $\det(K)$ is a square.
If $n(K)=2$, then $\det(K)$ has a divisor of the form $4k+3$.
\end{theorem}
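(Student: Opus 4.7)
The plan is to deduce the theorem directly from the explicit integer matrix realization provided by Theorem~\ref{thm:representlk}. Writing $\sigma(K)=4\varepsilon$ with $\varepsilon\in\{-1,1\}$, the hypotheses $n(K)=2$ and $|\sigma(K)|=4$ give a symmetric $2\times 2$ integer matrix
\[
A=\begin{pmatrix} a & b \\ b & c \end{pmatrix}
\]
that is $\varepsilon$-definite, satisfies $A\equiv I\pmod 2$ (so $a,c$ are odd and $b$ is even), has diagonal entries $a,c\equiv -\varepsilon\pmod 4$, and has $|\det(A)|=|\det(K)|$. Since a $2\times 2$ definite matrix has positive determinant, $\det(A)=|\det(K)|>0$.

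After possibly replacing $A$ by $-A$, which preserves being symmetric integer, the mod~$2$ condition, the absolute value of the determinant, and flips the sign of both the definiteness and the diagonal residues mod $4$, I may assume that $A$ is positive definite. Then $a,c$ are positive integers with $a\equiv c\equiv 3\pmod 4$ and $b$ is even. The assumption that $\det(K)$ is a square, together with $\det(A)=ac-b^2=|\det(K)|$, produces a positive odd integer $d$ with $d^2=|\det(K)|$, and hence the equation
\[
ac=b^2+d^2.
\]

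The final step is a classical sum-of-two-squares argument. Since $a>0$ and $a\equiv 3\pmod 4$, the integer $a$ cannot be a product of primes all congruent to $1\pmod 4$, so $a$ admits at least one prime divisor $p$ with $p\equiv 3\pmod 4$. Then $p\mid ac=b^2+d^2$, and because $-1$ is not a quadratic residue modulo a prime $p\equiv 3\pmod 4$, this forces $p\mid b$ and $p\mid d$. In particular $p\mid d^2=|\det(K)|$, giving a divisor of $\det(K)$ of the form $4k+3$ and contradicting the hypothesis.

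The argument is quite short once Theorem~\ref{thm:representlk} is available; the main conceptual point — and the only real ``obstacle'' — is recognizing that properties~(4) and~(5) of that theorem (definiteness plus the congruence $a,c\equiv -\varepsilon\pmod 4$) are precisely the constraints needed to trigger the Fermat-style obstruction to writing $ac$ as a sum of two squares when $\det(K)$ is a square with no prime factor $\equiv 3\pmod 4$. No step involves a delicate computation.
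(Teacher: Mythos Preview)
Your proof is correct and follows essentially the same route as the paper's: invoke Theorem~\ref{thm:representlk} to obtain the $2\times 2$ matrix, read off that the diagonal entries are positive and $\equiv 3\pmod 4$ while the off-diagonal entry is even, pick a prime $p\equiv 3\pmod 4$ dividing a diagonal entry, and use that $-1$ is a non-residue modulo such $p$ to force $p\mid \det(K)$. One small slip in wording: the closing phrase ``contradicting the hypothesis'' is out of place, since the theorem as stated is a direct implication and your argument already produces the required divisor --- there is no hypothesis being contradicted.
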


\begin{proof}
Let $K$ be a knot  with  $|\s(K)|=4$ and such that $\det(K)$ is a square and suppose that $n(K)=2$.
Without loss of generality we can assume that $\s(K)=4$.
By Theorem  \ref{thm:representlk} there exists a positive definite matrix $A$ with $|\det(A)|=|\det(K)|$
such that
\[ A=\bp 4k+3  &2m \\ 2m & 4l+3 \ep,\]
for some $k,l,m\in \Z$.
Note that $A$ being positive definite and $\det(K)$ being a square implies that in fact $\det(A)=\det(K)$.

Since $A$ is positive definite it also follows that $4k+3>0$ and that
\[ \det(A)=(4k+3)(4l+3)-4m^2 \]
is positive. It follows that $k$ and $l$ actually are positive.
Now assume that $\det(A)=\det(K)=d^2$ is a square.
We thus see that $(4k+3)(4l+3)-4m^2=d^2$. Since $4k+3>0$ we can find a prime $p$  of the form $4r+3$ which divides $4k+3$.
We are done once we show that $p$ also divides $m$.

Suppose that $p$ does not divide $m$. Since $p$ divides $d^2+4m^2$ we obtain the equation
$d^2=-(2m)^2 \mod p$, but since $2m$ is non--zero, and hence invertible modulo $p$ we obtain that $-1$ is a square modulo $p$.
But it is well--known that $-1$ is not a square modulo a prime of the form $4r+3$.

\end{proof}


\section{New obstructions from the Blanchfield pairing}

\subsection{Obstructions from the Blanchfield pairing to $n(K)=1$}

We have already seen that the Nakanishi index, the signature and the Lickorish criterion give lower bounds
on $n(K)$. The Lickorish obstruction can be summarized as replacing an infinite problem
(can the Blanchfield pairing be represented by a $1\times 1$--matrix over $\zt$) by a finite problem (can the linking pairing be represented by a $1\times 1$--matrix over $\Z$).
This principle can easily be generalized.

To formulate the generalizations we need two `reductions' of the Blanchfield pairing.
First, let $p$ be a prime. We denote by $Q_p$
the quotient field of $\F_p\tpm$. Then we can imitate the definition of the Blanchfield pairing over $\zt$ to define a  pairing
\[ H_1(X(K);\F_p\tpm)\times H_1(X(K);\F_p\tpm)\to Q_p/\F_p\tpm.\]
Second, let  $k$ be an integer such that $H_1(\S_k(K))$ is finite, where $\S_k(K)$ denotes the $k$--fold branched cover of $K$. Note that $H_1(\S_k(K))$ is a module over $\Z[\Z/k]$, the group ring of $\Z/k$.
We can then define a pairing
\[ H_1(\S_k(K)) \times H_1(\S_k(K))\to S^{-1}\Z[\Z/k]\,/\,\Z[\Z/k],\]
where
\[ S:=\{ f\in \Z[\Z/k]=\zt/(t^k-1)\,|\, f(1)=1\}.\]
The proof that the matrix $A_K(t)$ over $\zt$ (see equation \eqref{equ:akt}) is a presentation matrix
of the Blanchfield pairing over $\zt$ can also be modified easily to show that
 $A_K(t)$ viewed as a matrix over $\F_p\tpm$ respectively over $\Z[\Z/k]=\zt/(t^k-1)$ is a presentation matrix for the two above pairings.
 (In particular this shows that both pairings are classical invariants, see also \cite{Go78} for more information.)
We thus obtain the following lemma:

\begin{lemma}\label{lem:finitelinking}
Let $K$ be a knot with $n(K)=n$.
\bn
\item Let $p$ be a prime, then the $\F_p$--Blanchfield pairing
\[ H_1(X(K);\F_p\tpm)\times H_1(X(K);\F_p\tpm)\to Q_p/\F_p\tpm)\]
can be represented by an $n\times n$--matrix over $\F_p\tpm$.
\item  Let $k$ be an integer, then the $\Z[\Z/k]$--Blanchfield pairing
\[ H_1(\S_k(K)) \times H_1(\S_k(K))\to S^{-1}\Z[\Z/k]\,\,/\,\,\Z[\Z/k]\]
can be represented by an $n\times n$--matrix over $\Z[\Z/k]$.
\en
\end{lemma}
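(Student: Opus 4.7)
The plan is to exploit the $n\times n$ matrix $A(t)$ over $\zt$ guaranteed by the hypothesis $n(K)=n$ and push it through the ring maps $\zt\to\F_p\tpm$ and $\zt\to\Z[\Z/k]$. By definition of $n(K)$, we have a hermitian matrix $A(t)\in M_n(\zt)$ with $\lambda(A(t))\cong\lambda(K)$ and $A(1)$ congruent over $\Z$ to a diagonal matrix of $\pm 1$ entries; in particular $\det(A(1))=\pm 1$. Let $\bar A$ denote either the reduction modulo $p$ or the reduction modulo $t^k-1$; in both cases $\det(\bar A(1))=\pm 1$, so $\det\bar A$ is a nonzero element of the quotient ring and $\bar A$ becomes invertible after passing to the quotient field $Q_p$ (respectively to $S^{-1}\Z[\Z/k]$). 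Thus $\bar A$ defines a non-singular hermitian pairing $\lambda(\bar A)$ on the quotient module in each case.

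The reference matrix is $A_K(t)$ from Section~\ref{section:seifertbf}. The parenthetical comment in the paragraph preceding the lemma already asserts that $\bar A_K$ presents the $\F_p$-Blanchfield, respectively $\Z[\Z/k]$-Blanchfield, pairing of $K$ over the appropriate ring, by repeating the argument of Proposition~\ref{prop:blakt} verbatim with $\L$ replaced by $\F_p\tpm$ (resp.\ $\Z[\Z/k]$) and $\L_0$ replaced by the obvious analogue. So it is enough to show that $\lambda(\bar A)\cong\lambda(\bar A_K)$.

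Since $\det(A_K(1))=\pm 1$ as well, Proposition~\ref{prop:ra81} applies over $\zt$ and links $A(t)$ to $A_K(t)$ by a finite chain of moves (1), (2), (3). The main step is to verify that each of these moves descends to an isometry after reduction. For move (1), $C\mapsto PC\overline{P}^t$ with $P\in GL_n(\zt)$: the image of $P$ in the quotient ring has determinant $\pm 1$, hence is invertible, so the move remains a congruence. For moves (2) and (3), block sum with a hermitian matrix $D$ having $\det(D)=\pm 1\in\zt$: after reduction, $\det(\bar D)$ is still $\pm 1$ and hence a unit (both in $\F_p\tpm$ and in $\Z[\Z/k]$), so the quotient module attached to $\bar D$ is zero and the pairing $\lambda(\bar D)$ is trivial; adjoining or removing such a trivial summand preserves the isometry class of the whole pairing.

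Assembling these observations, $\lambda(\bar A)\cong\lambda(\bar A_K)$, and the latter is the relevant Blanchfield pairing of $K$, so the $n\times n$ reduction $\bar A$ is the desired presentation in each of the two cases. I do not expect any serious obstacle here: the only point requiring mild care is to confirm that $\pm 1\in\zt$ remains a unit in $\Z[\Z/k]$ (trivially) and in $\F_p\tpm$ (also trivially), so that the invertibility arguments go through; once this is observed, the proof is essentially a functoriality check of Proposition~\ref{prop:ra81} under ring reduction.
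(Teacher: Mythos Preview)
Your proposal is correct and follows the same approach as the paper. The paper itself gives no formal proof: it merely observes in the paragraph preceding the lemma that the argument of Proposition~\ref{prop:blakt} carries over to show that $A_K(t)$ presents the reduced pairings, and then states the lemma. Your write-up makes explicit the missing step---passing from $A_K(t)$ to the $n\times n$ matrix $A(t)$ via Proposition~\ref{prop:ra81} and checking that each of the three moves survives reduction---which is exactly the method the paper uses in the analogous Lemmas~\ref{lem:signetaa} and~\ref{lem:atlinkingform}. One small remark: in the $\Z[\Z/k]$ case your phrase ``$\det\bar A$ is a nonzero element of the quotient ring'' is not quite the right justification (the ring has zero-divisors), but your actual argument is fine since $\pm\det(\bar A)$ lies in $S$ and is therefore invertible in $S^{-1}\Z[\Z/k]$.
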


If $p$ is any prime, or if $k$ is any integer such that $H_1(\S_k(K);\Z)$ is finite,
then we are dealing with finite objects. In particular in theses cases the obstructions provided by the lemma are computable.
We implemented both obstructions in the case $n(K)=1$.
We applied it to all knots with up to 14 crossings with primes usually up to 11 and $k$ usually up to 6.
(The size of $H_1(\S_k(K))$ typically grows very fast with $k$, putting limitations on the range of $k$.)
 To our great surprise (and disappointment), among all knots with up to 14 crossings  we could not find a single example
where these obstructions to $n(K)=1$ could see beyond the Nakanishi index, the Levine--Tristram signatures and the Lickorish obstruction.

In  Sections~\ref{section:obstwo} and \ref{section:obsthree}
we will on the other hand see that using the linking pairing on $H_1(\Sigma(K))$ we can give new obstructions to $n(K)=2$ and $n(K)=3$,
which actually do work in practice.

\subsection{Obstructions from the linking pairing to $n(K)=2$}\label{section:obstwo}

%

If $K$ is a knot with $n(K)=2$, then it follows from Theorem \ref{thm:representlk} that the linking pairing $l(K)$
can be represented by a certain symmetric $2\times 2$--matrix.
The full classification of all symmetric $2\times 2$ matrices up to a congruence was already known to Gau\ss,
below we state a slightly weaker result, referring to \cite[Section 15.3]{CS99} for an excellent exposition.

\begin{lemma}\label{lem:22indef}
Let $A$ be a symmetric integral $2\times 2$--matrix with $d:=\det(A)\ne 0$. Then, either $A$ is congruent to a matrix of the form
\[ \bp a & c \\ c& b\ep \]
such that the following hold:
\bn
\item $0<|a|\leq |b|\leq |d|$,
\item $c\in \{0,\dots,\lfloor \frac{|a|}{2}\rfloor\}$,
\en
or $A$ is congruent to a matrix of the form
\[ \bp a & c \\ c& 0\ep \]
with $c^2=d$, $c\geq 0$ and $a\in \{-c,\dots,c\}$.
\end{lemma}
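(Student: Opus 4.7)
The plan is to invoke the classical reduction theory for integral binary quadratic forms. Associate to $A$ the quadratic form $f(v) := v^t A v$ on $\Z^2$; two symmetric integral matrices are congruent if and only if the corresponding forms differ by a change of $\Z$--basis, i.e., by some element of $\gl(2,\Z)$. Set
\[
m := \min\{\,|f(v)| : v \in \Z^2 \text{ primitive}\,\}.
\]
The argument splits on whether $m>0$ or $m=0$.

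First suppose $m>0$. I would pick a primitive $v_1 \in \Z^2$ with $|f(v_1)|=m$ and extend it to a $\Z$--basis $(v_1,v_2)$ of $\Z^2$; then $A$ is congruent to $\bp a & c \\ c & b\ep$ with $a := f(v_1)$, $b := f(v_2)$, $c := v_1^t A v_2$, and $|a|=m$. Replacing $v_2 \mapsto v_2 + k v_1$ keeps $a$ fixed and sends $c \mapsto c + ka$, so an appropriate $k \in \Z$ reduces $c$ modulo $a$; after possibly flipping $v_2 \mapsto -v_2$ one forces $0 \le c \le \lfloor |a|/2 \rfloor$. The minimality of $|a|$ immediately gives $|a| \le |f(v_2)| = |b|$.

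The main inequality to establish is $|b| \le |d|$, which I would get by a case analysis on the sign of $d = ab - c^2$. In the definite case $d>0$, the integers $a$ and $b$ share sign; when $|a|=1$ the bound on $c$ forces $c=0$, so $|b|=|d|$, and when $|a|\ge 2$ the estimate $c^2 \le a^2/4 \le |a||b|/4$ yields $|d| \ge \tfrac{3}{4}|a||b| \ge |b|$. In the indefinite case $d<0$, one has $c^2 > ab$; this rules out $a,b$ having the same sign (else $|c|>|a|$, contradicting $|c| \le |a|/2$), so they must have opposite signs, and then $|d| = c^2 + |a||b| \ge |b|$.

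Finally, if $m=0$, pick a primitive $v_1$ with $f(v_1)=0$ and extend to a basis $(v_2, v_1)$ of $\Z^2$; then $A$ is congruent to $\bp a & c \\ c & 0\ep$ with $a = f(v_2)$, $c = v_2^t A v_1$, and $c \ne 0$ since $d \ne 0$. Flipping $v_1 \mapsto -v_1$ if needed makes $c \ge 0$, and replacing $v_2 \mapsto v_2 + k v_1$ shifts $a$ by $2kc$, reducing $a$ into the interval $\{-c,\dots,c\}$. The main obstacle is the case analysis showing $|b| \le |d|$ in the $m>0$ case, where the small-$|a|$ subcases of the definite case and the ruling out of same-sign $(a,b)$ in the indefinite case have to be handled by hand.
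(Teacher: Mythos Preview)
Your argument is correct and follows essentially the same reduction-theory strategy as the paper: minimize the absolute value represented by the form, then reduce the off-diagonal entry modulo the $(1,1)$ entry, and handle the isotropic case separately. The only difference is in verifying $|b|\le|d|$: you split into the definite and indefinite cases, whereas the paper handles both at once via the single chain $|ab-c^2|\ge|a||b|-(\lfloor|a|/2\rfloor)^2\ge|a||b|-(|a|-1)^2=|b|+(|a|-1)(|b|-|a|+1)\ge|b|$.
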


For the reader's convenience
 we give a short proof of the lemma.

\begin{proof}
First assume that  $A$ is congruent to a matrix such that one of the diagonal entries is zero.
It is straightforward to see that in that case $A$ is congruent to a matrix of the latter type.

Now suppose that $A$ is not congruent to matrix such that one of the diagonal entries is zero.
Among all matrices congruent to $A$ we then pick a matrix such that the absolute value of the (1,1) entry (i.e. the top left one) is minimal.
We write this matrix as
\[ B:=\bp a & c \\ c& b\ep. \]
After adding a suitable multiple of the first row to the second row and the same multiple of the first column to the second column
we can assume that $|c|\leq \frac{|a|}{2}$. If $c<0$, then we multiply the first row and the first column by minus one, to arrange
that $c\geq 0$. By the minimality of $a$, even after these operations, we still have that $|a|\leq |b|$.

Finally note that
\[ \ba{rcl} |d|=|ab-c^2|&\geq &|a||b|-(\lfloor \frac{|a|}{2}\rfloor)^2 \\
&\geq &|a||b|-(|a|-1)^2\\
&=&|b|+(|a|-1)|b|-(|a|-1)^2=|b|+(|a|-1)(|b|-|a|+1)\geq|b|.\ea\]
We thus see that $|d|\geq |b|$.
\end{proof}

We can now describe our obstruction to a knot having $n(K)=2$.
Let $K$ be a knot with determinant $d=\det(K)$.
We denote by  $C_1,\dots,C_l$ the matrices which satisfy conditions $(1)$ and $(2)$ from the previous corollary
applied to $\pm d$,
and which are congruent to the identity modulo two.
If $\s(K)=4\cdot \eps$ for some $\eps\in \{-1,1\}$, then we furthermore demand that each $C_i$ is $\eps$--definite
 and that each $C_i$ is congruent modulo four to a matrix of the form
\[ \bp -\eps & 2m \\ 2m&-\eps\ep , \]
for some $m\in \Z$. It is clear that this list of matrices $C_1,\dots,C_l$ can be explicitly determined.

We can now formulate the following obstruction:

\begin{proposition}
If $n(K)=2$, then there exists an integer $k\in \{1,\dots,l\}$ and an isometry $l(C_k)\cong 2l(K)$.
\end{proposition}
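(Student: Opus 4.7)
The proposition should follow from Theorem \ref{thm:representlk} together with the classification in Lemma \ref{lem:22indef}, with a bookkeeping step in between to preserve the mod--$2$ (and, in the definite case, mod--$4$) structure.

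First, I would apply Theorem \ref{thm:representlk} with $n=2$ to obtain a symmetric integral $2 \times 2$ matrix $A$ with $|\det A|=d$, $l(A)\cong 2 l(K)$, $A\equiv I \pmod 2$, and (when $\sigma(K)=4\epsilon$) additionally $\epsilon$-definite with diagonal entries congruent to $-\epsilon$ modulo $4$. Since the linking pairing depends only on the $\Z$-congruence class of the representing matrix, it suffices to exhibit a matrix $C_k$ from the list that lies in the same $\Z$-congruence class as $A$; then $l(C_k)\cong l(A)\cong 2l(K)$ will be automatic.

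Next, I would reduce $A$ via $\Z$-congruence to a matrix $B$ in normal form using Lemma \ref{lem:22indef}. The second (degenerate) form $\bp a & c \\ c & 0 \ep$ can be avoided: such a matrix would force $d=c^2$ to be a perfect square with $c$ odd (in order for $B\pmod 2$ to lie in the $\F_2$-congruence class of $I$), and a direct congruence computation, illustrated by $\bp 1 & 1 \\ 1 & 0 \ep \sim \bp 1 & 0 \\ 0 & -1 \ep$ via $P=\bp 1 & 0 \\ -1 & 1 \ep$, would produce a first-form representative in the same $\Z$-congruence class. So without loss of generality $B$ satisfies conditions (1) and (2) of Lemma \ref{lem:22indef} applied to $\pm d$.

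The main obstacle is to verify that $B$ can be chosen to also satisfy the remaining conditions $B\equiv I\pmod 2$ and, when $|\sigma(K)|=4$, the $\epsilon$-definiteness and mod--$4$ diagonal conditions. Definiteness is preserved by any real congruence, so it inherits automatically from $A$ to $B$. For the mod--$2$ condition, the key observation is that the $\F_2$-congruence class of $A \pmod 2$ is a $\Z$-congruence invariant; since $A\pmod 2 = I$, the matrix $B\pmod 2$ lies in the $\F_2$-class of $I$. Lifting an $\F_2$-orthogonal transformation taking $B\pmod 2$ to $I$ to an integer matrix of determinant $\pm 1$ and adjusting $B$ by this congruence (then re-applying the reduction of Lemma \ref{lem:22indef} within the subgroup preserving the mod--$2$ type, which is realizable since $O(2,\F_2)$ is nontrivial) produces a representative $B'\equiv I\pmod 2$ still in normal form. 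The mod--$4$ diagonal condition in the $\sigma(K)=4\epsilon$ case is the most delicate point of the argument and requires tracking the diagonal entries of $B'$ through the normal-form moves modulo $4$; the fact that $A$ already has the required mod--$4$ diagonal structure together with the fact that mod--$2$-preserving congruences act on the diagonal modulo $4$ in a controlled way should suffice. The resulting matrix is one of the $C_k$.
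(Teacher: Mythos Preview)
Your overall route---apply Theorem~\ref{thm:representlk} to produce a $2\times 2$ matrix $A$ with $l(A)\cong 2l(K)$, then invoke Lemma~\ref{lem:22indef} and the fact that $\Z$-congruent matrices yield isometric linking pairings---is exactly the paper's. The paper's proof is two sentences and does not discuss the mod-$2$ or mod-$4$ conditions at all.

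You are right to flag that the normal-form representative produced by Lemma~\ref{lem:22indef} need not itself satisfy $B\equiv I\pmod 2$ entrywise, so it is not obviously on the list $C_1,\dots,C_l$. However, your proposed repair---lift an $\F_2$-congruence to $\Z$, then ``re-apply the reduction within the subgroup preserving the mod-$2$ type''---does not work. Take $A=\bp 3&2\\2&3\ep$: it is positive definite with $\det A=5$ and $A\equiv I\pmod 2$, yet the \emph{only} reduced form $\Z$-congruent to $A$ is $\bp 2&1\\1&3\ep$, which is not $\equiv I\pmod 2$. (One checks that $A$ does not represent $1$, so it is not congruent to $\bp 1&0\\0&5\ep$, and these are the only two reduced positive definite forms of determinant $5$.) Your adjust-and-re-reduce loop will simply return to $\bp 2&1\\1&3\ep$; there is no normal-form representative with the desired mod-$2$ shape in this congruence class. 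The mod-$4$ step is handled only by the phrase ``should suffice'', which is not an argument. The paper is equally silent here; one consistent reading is that ``congruent to the identity modulo two'' in the description of the $C_i$ is meant as $\F_2$-congruence of the reduction (which \emph{is} a $\Z$-congruence invariant), in which case there is nothing further to check and your extra bookkeeping is unnecessary.
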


Note that it follows easily from the proof of Theorem \ref{thm:stoimenow} that  this $n(K)=2$ obstruction contains the Stoimenow obstruction.

\begin{proof}
It is obvious that congruent matrices define isometric linking pairings. The proposition now follows from Theorem  \ref{thm:representlk} and Lemma \ref{lem:22indef}.
\end{proof}

The following lemma gives an elementary way to check whether $2l(K)$ is isometric to $l(C)$ for a given $2\times 2$--matrix $C$:

\begin{lemma}\label{lem:checkn2}
Let $C$ be a symmetric integral $2\times 2$--matrix with $\det(C)=\pm \det(K)$.
Then there exists an isometry $l(C)\cong 2l(K)$ if and only if there exist
$v_1,v_2\in H_1(\Sigma(K))$ which generate $H_1(\Sigma(K))$, such that
\[ 2l(K)(v_i,v_j)=\mbox{$(i,j)$--entry of $C^{-1}$} \mbox{ modulo $\Z$}\]
for any $i,j\in \{1,2\}$.
\end{lemma}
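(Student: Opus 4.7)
The proof is essentially a translation between an abstract isometry and a concrete matrix equation, and it splits naturally into two directions.

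For the forward direction, I would start with an isometry $\phi\colon \Z^2/C\Z^2 \to H_1(\Sigma(K))$ realizing $l(C) \cong 2l(K)$, let $e_1, e_2$ denote the standard basis of $\Z^2$, and define $v_i := \phi(e_i)$. Surjectivity of $\phi$ gives that $v_1, v_2$ generate $H_1(\Sigma(K))$, and compatibility of the pairings immediately yields
\[ 2l(K)(v_i, v_j) = l(C)(e_i, e_j) = e_i^t C^{-1} e_j = (C^{-1})_{ij} \mod \Z, \]
which is exactly what is required.

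For the backward direction, assume $v_1, v_2$ are given with the stated properties. The plan is to define a homomorphism $\psi\colon \Z^2 \to H_1(\Sigma(K))$ by $e_i \mapsto v_i$, show that $\psi$ descends to a map $\bar\psi\colon \Z^2/C\Z^2 \to H_1(\Sigma(K))$, and then show $\bar\psi$ is an isometry. The main (and essentially only non-trivial) step is showing that $\psi(C\Z^2) = 0$, i.e.\ that each column $c_j = Ce_j$ of $C$ maps to zero. For this I would compute, for any $k$:
\[ 2l(K)\bigl(\psi(c_j), v_k\bigr) = \sum_i C_{ij}\cdot 2l(K)(v_i, v_k) = \sum_i C_{ij} (C^{-1})_{ik} = (CC^{-1})_{jk} = \delta_{jk} \mod \Z = 0, \]
using symmetry of $C$ in the middle step. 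Since $v_1, v_2$ generate $H_1(\Sigma(K))$, this implies $2l(K)(\psi(c_j), x) = 0$ for all $x$, and since $\det(K)$ is odd the pairing $2l(K)$ is non-singular, so $\psi(c_j) = 0$. Thus $\bar\psi$ is well-defined.

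To finish, surjectivity of $\bar\psi$ follows from the generation hypothesis, and the order count
\[ |\Z^2/C\Z^2| = |\det(C)| = |\det(K)| = |H_1(\Sigma(K))| \]
upgrades this to a bijection. Finally, the defining computation $2l(K)(\bar\psi(e_i), \bar\psi(e_j)) = (C^{-1})_{ij} = l(C)(e_i, e_j)$, extended bilinearly, shows that $\bar\psi$ is an isometry. I expect the key subtlety to be just the descent step, which requires invoking both the symmetry of $C$ and non-singularity of $2l(K)$ (and hence oddness of $\det(K)$); everything else is bookkeeping.
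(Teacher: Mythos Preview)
Your proof is correct and follows essentially the same route as the paper's: both directions are argued identically, with the descent step in the backward direction established by pairing a putative kernel element against all generators and invoking non-singularity of the linking pairing, followed by the order count $|\det(C)|=|\det(K)|$ to upgrade surjectivity to bijectivity. Your version is slightly more explicit in writing out the index computation (and in noting where symmetry of $C$ and oddness of $\det(K)$ enter), but the argument is the same.
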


\begin{proof}
First let $\Phi\colon \Z^2/C \Z^2\to H_1(\Sigma(K))$ be an isomorphism which induces an isometry
$l(C)\cong 2l(K)$.
We denote by $v_1,v_2$ the images of $e_1=(1,0)$ and $e_2=(0,1)$. It follows immediately from the definitions
that $v_1$ and $v_2$ have the desired properties.

Conversely, suppose we are given $v_1,v_2\in H_1(\Sigma(K))$ which generate $H_1(\Sigma(K))$, such that
\be \label{equ:formsij} 2l(K)(v_i,v_j)=\mbox{$(i,j)$--entry of $C^{-1}$} \mbox{ modulo $\Z$}\ee
for any $i,j$.
We denote by $\Phi\colon \Z^2\to H_1(\Sigma(K))$ the map given by $\Phi(e_i)=v_i$.
This map is evidently surjective.

We claim that this map descends to a map $\Z^2/C \Z^2\to H_1(\S(K))$.
Let $v\in C\Z^2$. Note that $v^tC^{-1}w\in \Z$ for all $w\in \Z^2$. It now follows that $l(K)(\Phi(v),\Phi(w))=0\in \Q/\Z$ for all $w\in \Z^2$.
But since $\Phi$ is surjective and since $l(K)$ is non--degenerate this implies that $\Phi(v)=0\in H_1(\Sigma(K))$. This shows
that $\Phi$ descends to a map $\Z^2/C \Z^2\to H_1(\Sigma(K))$.

Since the map $\Phi$ is an epimorphism between finite groups of the same size it follows that this map is an isomorphism,
and  condition (\ref{equ:formsij})  implies that $\Phi$  is in fact an isometry.
\end{proof}

In Section~\ref{ss:examplesnk2} we will give several examples of knots where this obstruction applies.


\subsection{Obstructions from the linking pairing to $n(K)=m$ for $m\ge 3$}\label{section:obsthree}

The approach in the previous section can also be extended to give an obstruction to $n(K)=m$ for arbitrary $m$. We focus on the definite case (i.e. when
the absolute value of the signature is equal to $2m$), and we plan to deal with the general case in a future paper.
The key ingredient to getting obstructions is  Lemma \ref{lem:33matrix} below.

\begin{lemma}\label{lem:33matrix}
Let $A$ be a positive definite symmetric $m\times m$--matrix. Then $A$ is congruent to a matrix of the form
\[ \bp f_{11} & f_{12} & \dots & f_{1m}\\ f_{21} & f_{22} & \dots & f_{2m}\\ \vdots & \vdots & \ddots & \vdots \\ f_{m1} & f_{m2} & \dots & f_{mm}\ep\]
such that
\bn
\item $0<f_{11}\le f_{22}\le \dots \le f_{mm}$,
\item for any $1\le i<j\le m$ we have $|2f_{ij}|\le f_{ii}$,
\item $f_{mm}\le B_m\det A$ for a constant $B_m$ depending only on $m$,
\item furthermore we can take $B_1=B_2=B_3=B_4=1$.
\en
\end{lemma}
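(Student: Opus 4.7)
My approach is the classical Minkowski reduction of positive definite integral quadratic forms.

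\emph{Setup.} Among all matrices $P^t A P$ with $P\in\GL_m(\Z)$, I would pick a Minkowski-reduced representative of $A$: one in which, for each $i$, the diagonal entry $f_{ii}$ equals the minimum of $v^t A v$ taken over all $v\in\Z^m$ with $\gcd(v_i,v_{i+1},\dots,v_m)=1$. These $v$ are precisely the vectors for which $(e_1,\dots,e_{i-1},v)$ extends to a $\Z$-basis of $\Z^m$. Since $A$ is positive definite and integer valued, the minimum at each stage is a positive integer and is attained, so a reduced representative exists.

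\emph{Conditions (1) and (2).} These fall out directly from minimality. For (1), the vector $v=e_j$ with $j>i$ satisfies $\gcd(v_i,\dots,v_m)=1$ and is therefore admissible for the $i$-th minimization, giving $f_{ii}\le e_j^t A e_j=f_{jj}$. For (2), the vector $v=e_j+ke_i$ satisfies $\gcd(v_j,\dots,v_m)=\gcd(1,0,\dots,0)=1$ and is admissible for the $j$-th minimization, yielding
\[
f_{jj}\le (e_j+ke_i)^t A (e_j+ke_i)=f_{jj}+2kf_{ij}+k^2f_{ii}
\]
for every $k\in\Z$. Taking $k=\pm 1$ gives $|2f_{ij}|\le f_{ii}$.

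\emph{Condition (3).} I would invoke Minkowski's classical inequality for reduced forms, $\prod_{i=1}^m f_{ii}\le c_m\det A$, where $c_m$ is an explicit constant depending only on $m$. Combined with $f_{ii}\ge 1$ (from positive definiteness and integrality), this yields $f_{mm}\le c_m\det A$, so we may set $B_m:=c_m$.

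\emph{Condition (4).} The sharpening $B_m=1$ for $m\le 4$ is the main difficulty. The cases $m=1,2$ are immediate, the second being essentially Lemma~\ref{lem:22indef}. For $m=3,4$ I would argue by case analysis on the values of $f_{11},\dots,f_{m-1,m-1}$. When these are all sufficiently large, the Minkowski inequality already gives $f_{mm}\le \det A$. When some $f_{ii}$ is small, condition (2) forces many off-diagonal entries in the corresponding row and column to vanish (for example, $f_{11}=1$ forces $f_{1j}=0$ for all $j>1$), so $A$ splits as a block sum $(1)\oplus A'$, and the claim reduces to the same statement in lower dimension, to which induction applies. I expect the bookkeeping in this final case analysis to be the chief technical obstacle, but the number of cases remains finite and manageable for $m\le 4$.
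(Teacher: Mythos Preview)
Your proposal is correct and follows essentially the same route as the paper: Minkowski reduction gives (1) and (2), Minkowski's product inequality $\prod_i f_{ii}\le \mu_m\det A$ gives (3), and for (4) the paper does exactly your case split --- if $f_{11}\ge 2$ then the product inequality (with the explicit constants $\mu_2=4/3$, $\mu_3=2$, $\mu_4=4$) already yields $f_{mm}\le\det A$, while $f_{11}=1$ forces the first row and column to vanish off the diagonal and reduces to dimension $m-1$. The only difference is that the paper cites Cassels for (1) and (2) whereas you spell out the minimality argument directly, and the paper's case analysis for (4) only needs to branch on $f_{11}$ rather than on all of $f_{11},\dots,f_{m-1,m-1}$.
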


\begin{proof}
By \cite[Theorem 12.1.1]{Ca78} the matrix $A$ can be put into a so-called reduced form (in the sense of Minkowski). Then \cite[Lemma 12.1.1]{Ca78}
shows parts (1) and (2) of Lemma~\ref{lem:33matrix}.

To show (3), observe that for a matrix in a reduced form we have
\be\label{eq:vdW}
f_{11}\cdot f_{22}\cdot \ldots \cdot f_{mm}\le \mu_m\det A,
\ee
where $\mu_2=\frac{4}{3}$, $\mu_3=2$, $\mu_4=4$ and for $m>4$,
\[\mu_m=\left(\frac{2}{\pi}\right)^m\left\{\Gamma\left(2+\frac{n}{2}\right)\right\}^{2}\left(\frac{5}{4}\right)^{\frac{1}{2}(n-3)(n-4)},\]
where $\Gamma$ is the Euler Gamma function
(see an excellent survey \cite{Wa56} for proofs and details.) As $f_{ii}\ge 1$, we immediately obtain that $f_{mm}\le B_m\det A$ for $B_m=\mu_m$.

To show that for $m=2,3,4$ we have $f_{mm}\le \det A$ we again use \eqref{eq:vdW}. Let us begin with the case $m=2$. If $f_{11}\ge 2$,
by \eqref{eq:vdW} we get $f_{22}\le \frac23\det A$. So assume that $f_{11}=1$. Then $f_{12}=f_{21}=0$ because the matrix is in the reduced form (see point
(2) in the statement of the lemma). But then $\det A=f_{22}$.

For $m=3$, if $f_{11}>1$ then $f_{11}f_{22}\ge 4$, so $f_{33}\le \frac12\det A$. If $f_{11}=1$, we have $f_{12}=f_{13}=f_{21}=f_{31}=0$, so the form is a block sum of
$(1)$ and a two dimensional form and we use the case $m=2$. The argument with $m=4$ is identical.
\end{proof}

If we assume that $f_{ii}\cong 3\bmod 4$ (cf. Theorem~\ref{thm:representlk}), we can show that $B_m$ can be chosen to be $1$ for some higher $m$ as well.
\begin{lemma}
If $f_{11},\dots,f_{mm}$ are congruent to $-1$ modulo $4$, then $f_{mm}\le \det A$ for $m\le 7$.
\end{lemma}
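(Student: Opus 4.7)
The plan is to combine the Minkowski-type inequality \eqref{eq:vdW} used in the proof of Lemma~\ref{lem:33matrix} with the new congruence hypothesis. Since $A$ is positive definite, each diagonal entry $f_{ii}$ is a positive integer, and the condition $f_{ii}\equiv -1\pmod 4$ then forces $f_{ii}\in\{3,7,11,\ldots\}$; in particular $f_{ii}\ge 3$ for every $i$. Combining this with $f_{11}\le\dots\le f_{mm}$ gives $f_{11}f_{22}\cdots f_{m-1,m-1}\ge 3^{m-1}$.

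Next I would rearrange \eqref{eq:vdW} to obtain
\[
f_{mm}\le \frac{\mu_m}{f_{11}f_{22}\cdots f_{m-1,m-1}}\,\det A \;\le\; \frac{\mu_m}{3^{m-1}}\det A,
\]
so the proof reduces to verifying the numerical inequality $\mu_m\le 3^{m-1}$ for $m=5,6,7$ (the cases $m\le 4$ being already covered by Lemma~\ref{lem:33matrix}). For these three values one evaluates the explicit formula for $\mu_m$ using $\Gamma(9/2)=\tfrac{105}{16}\sqrt{\pi}$, $\Gamma(5)=24$ and $\Gamma(11/2)=\tfrac{945}{32}\sqrt{\pi}$. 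The resulting numerical estimates are approximately $\mu_5\approx 18$, $\mu_6\approx 75$ and $\mu_7\approx 445$, each comfortably below $3^{4}=81$, $3^{5}=243$ and $3^{6}=729$ respectively.

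The main obstacle is really just the arithmetic verification of the three inequalities; there is no further geometric input beyond the reduction theory already established. It is worth remarking that the argument breaks sharply at $m=8$: there one computes $\mu_8\approx 3.6\cdot 10^{3} > 3^{7}=2187$, which explains why the statement is restricted to $m\le 7$ and suggests that without additional structural information on the off-diagonal entries the conclusion genuinely cannot be extended uniformly in $m$.
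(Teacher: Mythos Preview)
Your proposal is correct and follows exactly the paper's approach: both use the congruence hypothesis to deduce $f_{ii}\ge 3$, insert this into \eqref{eq:vdW} to get $f_{mm}\le 3^{1-m}\mu_m\det A$, and then verify numerically that $3^{1-m}\mu_m\le 1$ for $m=5,6,7$. Your added remark explaining why the bound fails at $m=8$ is a nice observation not present in the paper.
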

\begin{proof}
From the assumptions $f_{ii}\ge 3$, hence by \eqref{eq:vdW} we have $f_{mm}\le 3^{1-m}\mu_m\det A$. Now an explicit computation shows that $3^{1-m}\mu_m\le 1$
for $m=5,6,7$.
\end{proof}

We can now describe our obstruction to a knot with $|\s(K)|=2m$ having $n(K)=m$.
Let $K$ be a knot. We write $d=|\det(K)|$
 and $|\s(K)|=2m$. Without loss of generality we can assume
that $\s(K)=2m$.
We denote by  $C_1,\dots,C_r$ the positive definite $m\times m$--matrices with determinant $d$ which satisfy conditions $(1), (2)$, $(3)$ and $(4)$ from Lemma \ref{lem:33matrix},
and which are congruent to the identity modulo two.
We furthermore restrict ourselves to matrices which are congruent to $-1$ modulo $4$.  It is clear that this list of matrices $C_1,\dots,C_r$ can be explicitly determined, even though for large $m$ or $\det A$
this list may be very long.

We can now formulate the following obstruction,
which is an immediate consequence of Theorem \ref{thm:representlk} and Lemma \ref{lem:33matrix}.

\begin{proposition}
If $n(K)=m$, then there exists an integer $s\in \{1,\dots,r\}$ and an isometry $l(C_s)\to 2l(K)$.
\end{proposition}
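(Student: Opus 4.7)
The plan is to combine Theorem \ref{thm:representlk} with Lemma \ref{lem:33matrix} in a direct fashion. Writing $d = |\det K|$, I may assume without loss of generality that $\s(K) = +2m$ (otherwise replace $K$ by its mirror image). Applying Theorem \ref{thm:representlk} with $\eps = +1$ then produces a positive definite symmetric $m \times m$ integral matrix $A$ satisfying $\det A = d$, $l(A) \cong 2l(K)$, $A \equiv I_m \pmod 2$, and $A_{ii} \equiv -1 \pmod 4$ for every $i$.

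Next, I apply Lemma \ref{lem:33matrix} to this $A$, obtaining a matrix $A' = P A P^t$ with $P \in GL_m(\Z)$ such that $A'$ satisfies the reduced-form conditions (1)--(4) of that lemma. Since integral congruence preserves the determinant, the positive definiteness, and the isometry class of the associated linking pairing, $A'$ is still positive definite with $\det A' = d$, and $l(A') \cong l(A) \cong 2l(K)$. To conclude that $A'$ belongs to the enumerated list $\{C_1, \dots, C_r\}$, and hence deliver the required index $s$ with $l(C_s) \cong 2l(K)$, it remains to verify that $A'$ itself also satisfies $A' \equiv I_m \pmod 2$ and $A'_{ii} \equiv -1 \pmod 4$.

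The main obstacle I anticipate is precisely this last step. A general unimodular congruence preserves the condition $A \equiv I_m \pmod 2$ only when $PP^t \equiv I_m \pmod 2$, and this is not automatic for arbitrary $P \in GL_m(\Z)$. My plan to overcome this is to carry out the Minkowski-type reduction of Lemma \ref{lem:33matrix} in such a way that the resulting $P$ reduces modulo $2$ to an element of the orthogonal group $O_m(\F_2)$. Concretely, I would build $P$ step by step: at each stage, select the new basis vector $v_i$ from the underlying lattice so that $q(v_i) := v_i^t A v_i \equiv -1 \pmod 4$ (such a choice is available because the original basis vectors have this property, and the Minkowski-shortness requirement can be met within the mod-$2$ residue class), and clean off-diagonals only by even multiples of earlier basis vectors. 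These operations preserve both the mod-$2$ identity and the mod-$4$ diagonal condition. Once this refinement of Lemma \ref{lem:33matrix} is established for matrices satisfying the hypotheses of Theorem \ref{thm:representlk}, the proposition is immediate: $A'$ is one of the $C_s$, and $l(C_s) \cong 2l(K)$ as required.
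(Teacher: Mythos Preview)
Your approach---apply Theorem~\ref{thm:representlk} and then Minkowski-reduce via Lemma~\ref{lem:33matrix}---is exactly what the paper does; the paper simply declares the proposition ``an immediate consequence'' of these two results and says nothing further. You are right that this glosses over a real issue: writing $A'=PAP^t$, one has $A'\equiv PP^t\pmod 2$ whenever $A\equiv I_m\pmod 2$, so the congruence $A\equiv I_m\pmod 2$ is \emph{not} preserved by an arbitrary unimodular $P$, and likewise the mod-$4$ diagonal condition need not survive.

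However, your proposed repair is only a sketch, and as written it does not produce a matrix satisfying the exact conditions (1)--(4) of Lemma~\ref{lem:33matrix}. If you clean off-diagonals using only \emph{even} multiples of earlier basis vectors (which is indeed what is required to keep the off-diagonals even and the diagonals $\equiv -1\pmod 4$), then replacing $e_j$ by $e_j+2k e_i$ changes $f_{ij}$ by $2k f_{ii}$, so you can only force $|f_{ij}|\le f_{ii}$, not $|2f_{ij}|\le f_{ii}$; condition (2) may fail. Similarly, insisting that each successive $v_i$ satisfy $q(v_i)\equiv -1\pmod 4$ may make it strictly longer than the true Minkowski minimum, so the Hermite--Minkowski inequality \eqref{eq:vdW} underlying condition (3) is no longer directly available; you assert that ``the Minkowski-shortness requirement can be met within the mod-$2$ residue class'' but give no argument for this. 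What your construction \emph{does} yield is a reduced matrix satisfying weaker (but still finite) bounds, which is enough to make the obstruction computable---but with a different, larger list than the paper's $\{C_1,\dots,C_r\}$, so the proposition exactly as stated does not follow from your argument. In short: you have correctly spotted a loose end that the paper itself does not address, but your fix needs more work before it actually closes the gap.
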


The following lemma gives an elementary way to check whether $2l(K)$ is isometric to $l(C)$ for a given $m\times m$--matrix $C$:

\begin{lemma}
Let $C$ be a symmetric $m\times m$--matrix with $\det(C)=\pm \det(K)$.
Then there exists an isometry $l(C)\cong 2l(K)$ if and only if there exist
$v_1,v_2,\dots,v_m\in H_1(\Sigma(K))$ which generate $H_1(\Sigma(K))$, such that
\[ 2l(K)(v_i,v_j)=\mbox{$(i,j)$--entry of $C^{-1}$} \mbox{ modulo $\Z$}\]
for any $i,j\in \{1,\dots,m\}$.
\end{lemma}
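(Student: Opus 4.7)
The plan is to imitate the argument of Lemma~\ref{lem:checkn2} verbatim, only replacing $2$ by $m$ throughout, since nothing in that proof was specific to size two. The two directions are essentially formal once one has set up the correct map between $\Z^m/C\Z^m$ and $H_1(\Sigma(K))$.

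For the forward direction, suppose $\Phi\colon \Z^m/C\Z^m \to H_1(\Sigma(K))$ is an isomorphism realizing the isometry $l(C)\cong 2l(K)$. I would set $v_i := \Phi(e_i)$ where $e_1,\dots,e_m$ is the standard basis of $\Z^m$. Surjectivity of $\Phi$ forces $v_1,\dots,v_m$ to generate $H_1(\Sigma(K))$, and the identity $l(C)(e_i,e_j)=$ ($(i,j)$-entry of $C^{-1}$) mod $\Z$ transports directly under $\Phi$ to the required formula for $2l(K)(v_i,v_j)$.

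For the converse, given $v_1,\dots,v_m$ with the stated properties, I would define $\Phi\colon \Z^m\to H_1(\Sigma(K))$ by $\Phi(e_i)=v_i$; this is surjective by hypothesis. The main (and only non-trivial) step is to verify $\Phi$ descends to $\Z^m/C\Z^m$. For $v\in C\Z^m$ and any $w\in \Z^m$ one has $\ol{v}^t C^{-1}w\in \Z$, so from the hypothesis it follows that $2l(K)(\Phi(v),\Phi(w))\equiv 0$ in $\Q/\Z$. Since $\Phi$ is surjective, $2l(K)(\Phi(v),\,\cdot\,)$ vanishes on all of $H_1(\Sigma(K))$; using that $|H_1(\Sigma(K))|=|\det(K)|$ is odd, multiplication by $2$ is an automorphism of $H_1(\Sigma(K))$, and non-degeneracy of $l(K)$ then forces $\Phi(v)=0$. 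Thus $\Phi$ descends to a surjection $\Z^m/C\Z^m\to H_1(\Sigma(K))$ between two finite abelian groups of the same order $|\det(C)|=|\det(K)|$, hence is an isomorphism. The hypothesis on linking numbers then says exactly that this isomorphism is an isometry $l(C)\cong 2l(K)$.

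The only point requiring any care is the passage from ``$2l(K)(\Phi(v),w')=0$ for all $w'$ in a generating set'' to ``$\Phi(v)=0$''; here the fact that $\det(K)$ is odd (so that $2$ is invertible in $H_1(\Sigma(K))$) together with non-singularity of $l(K)$ does the job. Beyond this there is no real obstacle — the lemma is the evident linear-algebra translation of the statement that the Blanchfield/linking form $2l(K)$ has the prescribed Gram matrix $C^{-1}$ with respect to some generating tuple.
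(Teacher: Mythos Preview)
Your proposal is correct and follows exactly the approach the paper intends: the paper simply states that the proof is almost identical to that of Lemma~\ref{lem:checkn2}, and you have carried out precisely that argument with $2$ replaced by $m$. Your explicit justification for passing from $2l(K)(\Phi(v),\cdot)=0$ to $\Phi(v)=0$ via the oddness of $|H_1(\Sigma(K))|$ is in fact slightly more careful than the paper's own treatment in the $m=2$ case.
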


The proof is of course almost identical to the proof of Lemma \ref{lem:checkn2}. Examples of application of this
criterion are given in Section~\ref{section:exampleu3}.

\subsection{Comparison with Owens' obstruction}
We now give a comparison of our obstruction with  the Owens obstruction \cite[Theorems~1~and~5]{Ow08}.
We summarize the key facts:
\bn
\item Owens shows that if a knot satisfies  $u(K)=m$ and  $|\s(K)|=2m$,
 then the Heegaard--Floer correction terms (see \cite{OS03a}) of the 2--fold branched cover of $K$
 satisfy
    \bn
    \item a certain inequality,
    \item and a certain equality modulo 2.
    \en
\item By work of Ozsv\'ath--Szab\'o \cite[Theorem~1.2]{OS03a} and  Taylor \cite{Ta84} (see also \cite{OwS05})  a knot which satisfies the  `mod 2 equality' of
Owens also  satisfies the conclusion of Theorem \ref{thm:representlk}.
\en
In practice the fact that one needs to be able to calculate the Heegaard--Floer correction terms of the 2--fold branched cover
means that the Owens obstruction can be calculated in a straightforward way for alternating knots, but it is rather difficult to calculate for most other knots. (Note though that calculations can be made if the 2--fold branched cover is a Seifert fibered space, this is the case for Montesinos knots and torus knots.)
We conclude with the discussion of some examples:
\bn
\item Owens \cite{Ow08} shows that the $u(K)=2$ obstruction applies to the alternating knots $9_{10}, 9_{13},  9_{38}, 10_{53}, 10_{101}, 10_{120}$ which have signature equal to four.
On the other hand the algebraic unknotting number equals 2 (using the computer program `knotorious' we can find explicit algebraic unknotting operations  in the sense of \cite{Muk90}).
\item Owens furthermore uses Heegaard--Floer homology and a result of Traczyk to show that the unknotting number of the alternating knot $9_{35}$ equals three, even though the signature equals two.
Again, the algebraic unknotting number equals 2.
\item Using the obstruction of Section \ref{section:obstwo} we can show  that the algebraic unknotting number of the non--alternating knot $11n_{148}$ equals three, even though the signature equals two.
Since the knot is non--alternating and since the signature does not equal four it seems difficult to use the Owens approach to show that the unknotting number equals three.
\item Owens \cite[Corollary~6]{Ow08} also used the $u(K)=3$ obstruction  to show that the unknotting number of the two-bridge knot $K=S(51, 35)$ equals four.
 This knot passes our $u_a(K)=3$ obstruction, and in fact we can show that the algebraic unknotting number of $K$ equals three.
 \en

\section{Examples}\label{section:ex}

 \subsection{Knotorious}
In the following we write
\[ \ZZ:=\{z\in S^1\,|\, z^{1296}=1\}.\]
We have written a computer program `knotorious' (which is available from the authors' webpages, see \cite{BF12a})
which given a Seifert matrix calculates the following invariants:
\bn
\item the signature,
\item the Levine--Tristram signatures $\s_z(K)$ with $z\in \ZZ$,
\item the lower bounds on the Nakanishi index coming from $H_1(\S(K))$ and the Alexander module over the finite fields $\F_2,\F_3,\F_5$ and $\F_7$,
\item the Lickorish obstruction,
\item the Stoimenow obstruction,
\item the $u(K)=2$ obstruction coming from the linking pairing on $H_1(\S(K))$ (see Section \ref{section:obstwo}),
\item the $u(K)=3$ obstruction coming from the linking pairing on $H_1(\S(K))$ (see Section \ref{section:obsthree}).
\en
The program also attempts to compute, in a non--rigorous way, the invariant $\mu(K)$.
Furthermore, the program also finds
upper bounds on the algebraic unknotting number by finding  explicit
algebraic unknotting moves (we refer to \cite{Muk90} and \cite[Section~2]{Sae99} for details on algebraic unknotting
 moves). We  calculated the above invariants  for all knots with up to 12 crossings, the details can be found on  the authors' webpages \cite{BF12a}.
All the examples in the subsequent sections are based on the calculations with `knotorious'.

\subsection{Examples for the new $n(K)=2$ obstruction coming from $l(K)$}\label{ss:examplesnk2}
Let $K$ be a knot.
In  Section~\ref{section:obstwo} we showed that the linking from $l(K)$ on the homology of the 2--fold branched cover of $K$
 gives an obstruction to $u(K)\leq 2$. We applied this obstruction to all knots with up to 12 crossings.
We found that the following knots with $|\s(K)|=4$ (in fact with $\mu(K)=4$) and $m(K)\leq 2$ have $n(K)>2$:
\[ \ba{lllllll} 9_{49},& 11a_{123},& 11n_{133},&12a_{311},&12a_{386},& 12a_{433}, &12a_{561},\\
             12a_{563},&12a_{569},& 12a_{664},&12a_{683},& 12a_{725}, &12a_{780},&12a_{907}\\
             12n_{276},&12n_{494},& 12n_{496},&12n_{626}, &12n_{654}.\ea \]
Furthermore the following knots have $|\s(K)|\leq 2$ (in fact
 $\mu(K)\leq 2$) and $m(K)\leq 2$ but $n(K)>2$:
\[ \ba{lllllll} 10_{103},& 11n_{148}, & 12a_{327},&12a_{921},&12a_{1194}, &12n_{147}.\ea \]
We now discuss to what degree previous invariants detect the unknotting numbers of the above examples:
\bn
\item  The Stoimenow obstruction applies to $9_{49}, 11n_{133}, 12a_{664}$ and $12n_{276}$, but does not apply to any of the other knots.
To the best of our knowledge none of the other previous classical invariants  detect that $n(K)>1$.
\item Stoimenow \cite{St04} also used the Brandt-Lickorish-Millett-Ho polynomial (see \cite{Ho85,BLM86}) to give an obstruction to $u(K)=2$.
Stoimenow shows that this criterion implies  that $u(10_{103})>2$. We did not check this criterion for the above 12 crossing knots.
\item Note that most of the above knots (including $10_{103}$ but not $9_{49}$) are alternating, in that case the Rasmussen $s$--invariant and the Ozsv\'ath--Szab\'o $\tau$--invariant
agree (up to a scaling factor) with the signature, in particular they do not determine the unknotting number of the above 12 alternating knots.
\item The $s$--invariant has been computed  for all knots with up to 12 crossings (see \cite{CL11}), it detects the unknotting number for only one of the above non--alternating knots, namely $12_{n276}$.
\item The $\tau$--invariant has been calculated for all knots with up to 11 crossings by Baldwin and Gillam (see \cite{BG06}),
 it does not detect the unknotting number for any of the above non--alternating knots with up to 11 crossings.
 \item Arguably $11n_{148}$ is the most interesting example. Many invariants for non--alternating knots are very difficult to calculate (e.g. the Heegaard--Floer correction terms of the 2--fold branched cover, as in \cite{OS05} and \cite{Ow08}).
 The aforementioned calculations show that the $\tau$--invariant and the $s$--invariant do not detect the unknotting number of $11n_{148}$. Furthermore, several obstructions to $u(K)=2$
 (e.g. \cite{Ow08} and \cite{St04}) can be applied only if $|\s(K)|=4$, whereas we have $\s(11n_{148})=2$.
\en
 The webpage \emph{knotinfo} \cite{CL11} maintained by Cha and Livingston collects the unknotting information on knots up to eleven crossings.
According to this information  it was not known before these calculations that $u(11a_{123})=3$
and $u(11n_{148})=3$.

\subsection{Examples for the $n(K)=3$ obstruction coming from $l(K)$}\label{section:exampleu3}

Let $K$ be a knot.
In  Section~\ref{section:obsthree} we showed that the linking from $l(K)$
 gives  rise to an obstruction to $u(K)\leq 3$. We applied this obstruction to  all knots with up to 14 crossings with $|\s(K)|=6$
and found that it applies to precisely two knots, namely $14n_{12777}$
 and $14a_{4637}$.

 We expect that given any $m\in \N$ there exist examples of knots such that the obstruction introduced in Section~\ref{section:obsthree}
 detects that $u_a(K)=m$, but such that all previous classical lower bounds do not detect that $u_a(K)=m$.
 Finding such examples will obviously require a different method than the `brute force' search we have done with `knotorious'.

\subsection{Knots with up to 11 crossings}\label{section:ua10}

Our calculations with `knotorious' show that the Nakanishi index, the signature,
the Lickorish obstruction and the $n(K)=2$ obstruction of Section \ref{section:obstwo} completely determine the algebraic unknotting number for all knots with up to 11 crossings.

The full details are available on \cite{BF12a}. For the reader's convenience we provide in Table~\ref{tab:ua10} the algebraic unknotting number for all knots with up to 10 crossings.
The subscripts denote the way of obtaining the result:
\begin{itemize}
\item $1_u$ means that there exists a single algebraic unknotting move which
changes the knot into a knot with trivial  Alexander polynomial.
\item $2_L$ means that we use the Lickorish obstruction.
\item $k_\sigma$ for $k=2,3,4$ means that the signature detects the algebraic unknotting number and it is greater than one.
\item $2_w$ stands for the Wendt criterion, in particular the Nakanishi
index is equal to two.
\item $2_A$ means that the minimal number of generators of the Alexander module over $\F_2[t^{\pm}]$ is two.
\item $3_{S}$ means that  the Stoimenow obstruction  (see Section~\ref{section:stoimenow}) applies.
\item $\mathbf{3_n}$ denotes our new obstruction as discussed in Section~\ref{section:obstwo}.
\end{itemize}
(Note that in some cases two or more obstructions will detect that $u_a(K)$, but we will only indicate one obstruction.)

The star indicates that the unknotting number is actually known (according to \cite{CL11}) to be
 larger than $u_a(K)$. We remark that for knots $10_{61}$, $10_{76}$ and $10_{100}$, we computed that $u_a(K)=2$, but it is not known, whether $u(K)=2$ or $3$.
These knots are not marked by a star.

\newcolumntype{K}{>{\columncolor[gray]{0.85}}r}
\begin{table}
\caption{Algebraic unknotting number for knots with up to 10 crossings. See Section~\ref{section:ua10} for an explanation of symbols.}\label{tab:ua10}
\begin{tabular}{||r|r||K|K||r|r||K|K||r|r||K|K||r|r||}
\hline\hline
knot&$n$&knot&$n$&knot&$n$&knot&$n$&knot&$n$&knot&$n$&knot&$n$\\\hline
$3_1$&$1_u$&
$9_2$&$1_u$&
$9_{38}$&$2_{\sigma}^*$&
$10_{25}$&$2_{\sigma}$&
$10_{61}$&$2_\sigma$&
$10_{97}$&$2_L$&
$10_{133}$&$1_u$\\

$4_1$&$1_u$&
$9_3$&$3_{\sigma}$&
$9_{39}$&$1_u$&
$10_{26}$&$1_u$&
$10_{62}$&$2_\sigma$&
$10_{98}$&$2_w$&
$10_{134}$&$3_\sigma$\\

$5_1$&$2_{\sigma}$&
$9_4$&$2_{\sigma}$&
$9_{40}$&$2_w$&
$10_{27}$&$1_u$&
$10_{63}$&$2_\sigma$&
$10_{99}$&$2_w$&
$10_{135}$&$1_u^*$\\

$5_2$&$1_u$&
$9_5$&$1_u^*$&
$9_{41}$&$2_w$&
$10_{28}$&$1_u^*$&
$10_{64}$&$1_u^*$&
$10_{100}$&$2_\sigma$&
$10_{136}$&$1_u$\\\hline

$6_1$&$1_u$&
$9_6$&$3_{\sigma}$&
$9_{42}$&$1_u$&
$10_{29}$&$2_L$&
$10_{65}$&$2_L$&
$10_{101}$&$2_\sigma^*$&
$10_{137}$&$1_u$\\

$6_2$&$1_u$&
$9_7$&$2_{\sigma}$&
$9_{43}$&$2_{\sigma}$&
$10_{30}$&$1_u$&
$10_{66}$&$3_{\sigma}$&
$10_{102}$&$1_u$&
$10_{138}$&$1_u^*$\\

$6_3$&$1_u$&
$9_8$&$1_u^*$&
$9_{44}$&$1_u$&
$10_{31}$&$1_u$&
$10_{67}$&$2_L$&
$10_{103}$&$\mathbf{3_{n}}$&
$10_{139}$&$3_\sigma^*$\\

$7_1$&$3_{\sigma}$&
$9_9$&$3_{\sigma}$&
$9_{45}$&$1_u$&
$10_{32}$&$1_u$&
$10_{68}$&$1_u^*$&
$10_{104}$&$1_u$&
$10_{140}$&$2_A$\\\hline

$7_2$&$1_u$&
$9_{10}$&$2_{\sigma}^*$&
$9_{46}$&$2_w$&
$10_{33}$&$1_u$&
$10_{69}$&$2_L$&
$10_{105}$&$2_L$&
$10_{141}$&$1_u$\\

$7_3$&$2_{\sigma}$&
$9_{11}$&$2_{\sigma}$&
$9_{47}$&$2_w$&
$10_{34}$&$1_u^*$&
$10_{70}$&$1_u^*$&
$10_{106}$&$2_L$&
$10_{142}$&$3_\sigma$\\

$7_4$&$2_L$&
$9_{12}$&$1_u$&
$9_{48}$&$2_w$&
$10_{35}$&$1_u^*$&
$10_{71}$&$1_u$&
$10_{107}$&$1_u$&
$10_{143}$&$1_u$\\

$7_5$&$2_{\sigma}$&
$9_{13}$&$2_{\sigma}^*$&
$9_{49}$&$3_S$&
$10_{36}$&$2_L$&
$10_{72}$&$2_\sigma$&
$10_{108}$&$2_L$&
$10_{144}$&$2_L$\\\hline

$7_6$&$1_u$&
$9_{14}$&$1_u$&
$10_{1}$&$1_u$&
$10_{37}$&$1_u^*$&
$10_{73}$&$1_u$&
$10_{109}$&$2_L$&
$10_{145}$&$1_u^*$\\

$7_7$&$1_u$&
$9_{15}$&$2_L$&
$10_2$&$3_\sigma$&
$10_{38}$&$1_u^*$&
$10_{74}$&$2_w$&
$10_{110}$&$1_u^*$&
$10_{146}$&$1_u$\\

$8_1$&$1_u$&
$9_{16}$&$3_{\sigma}$&
$10_3$&$2_L$&
$10_{39}$&$2_{\sigma}$&
$10_{75}$&$2_w$&
$10_{111}$&$2_\sigma$&
$10_{147}$&$1_u$\\

$8_2$&$2_{\sigma}$&
$9_{17}$&$2_L$&
$10_4$&$1_u^*$&
$10_{40}$&$2_L$&
$10_{76}$&$2_\sigma$&
$10_{112}$&$1_u^*$&
$10_{148}$&$1_u^*$\\\hline

$8_3$&$1_u^*$&
$9_{18}$&$2_{\sigma}$&
$10_5$&$2_\sigma$&
$10_{41}$&$1_u^*$&
$10_{77}$&$1_u^*$&
$10_{113}$&$1_u$&
$10_{149}$&$2_\sigma$\\

$8_4$&$1_u^*$&
$9_{19}$&$1_u$&
$10_6$&$2_\sigma^*$&
$10_{42}$&$1_u$&
$10_{78}$&$2_\sigma$&
$10_{114}$&$1_u$&
$10_{150}$&$2_\sigma$\\

$8_5$&$2_{\sigma}$&
$9_{20}$&$2_{\sigma}$&
$10_7$&$1_u$&
$10_{43}$&$1_u^*$&
$10_{79}$&$1_u^*$&
$10_{115}$&$2_A$&
$10_{151}$&$1_u^*$\\

$8_6$&$1_u^*$&
$9_{21}$&$1_u$&
$10_8$&$2_\sigma$&
$10_{44}$&$1_u$&
$10_{80}$&$3_\sigma$&
$10_{116}$&$2_L$&
$10_{152}$&$3_\sigma^*$\\\hline

$8_7$&$1_u$&
$9_{22}$&$1_u$&
$10_9$&$1_u$&
$10_{45}$&$1_u^*$&
$10_{81}$&$1_u^*$&
$10_{117}$&$1_u^*$&
$10_{153}$&$1_u^*$\\

$8_8$&$2_L$&
$9_{23}$&$2_{\sigma}$&
$10_{10}$&$1_u$&
$10_{46}$&$3_{\sigma}$&
$10_{82}$&$1_u$&
$10_{118}$&$1_u$&
$10_{154}$&$2_\sigma^*$\\

$8_9$&$1_u$&
$9_{24}$&$1_u$&
$10_{11}$&$1_u^*$&
$10_{47}$&$2_{\sigma}$&
$10_{83}$&$1_u^*$&
$10_{119}$&$1_u$&
$10_{155}$&$2_w$\\

$8_{10}$&$1_u^*$&
$9_{25}$&$1_u^*$&
$10_{12}$&$1_u^*$&
$10_{48}$&$1_u^*$&
$10_{84}$&$1_u$&
$10_{120}$&$2_\sigma^*$&
$10_{156}$&$1_u$\\\hline

$8_{11}$&$1_u$&
$9_{26}$&$1_u$&
$10_{13}$&$1_u^*$&
$10_{49}$&$3_{\sigma}$&
$10_{85}$&$2_\sigma$&
$10_{121}$&$2_L$&
$10_{157}$&$2_w$\\

$8_{12}$&$1_u^*$&
$9_{27}$&$1_u$&
$10_{14}$&$2_\sigma$&
$10_{50}$&$2_{\sigma}$&
$10_{86}$&$2_L$&
$10_{122}$&$2_L$&
$10_{158}$&$1_u^*$\\

$8_{13}$&$1_u$&
$9_{28}$&$1_u$&
$10_{15}$&$1_u^*$&
$10_{51}$&$1_u^*$&
$10_{87}$&$1_u^*$&
$10_{123}$&$2_w$&
$10_{159}$&$1_u$\\

$8_{14}$&$1_u$&
$9_{29}$&$1_u^*$&
$10_{16}$&$1_u^*$&
$10_{52}$&$1_u^*$&
$10_{88}$&$1_u$&
$10_{124}$&$4_\sigma$&
$10_{160}$&$2_\sigma$\\\hline

$8_{15}$&$2_{\sigma}$&
$9_{30}$&$1_u$&
$10_{17}$&$1_u$&
$10_{53}$&$2_{\sigma}^*$&
$10_{89}$&$2_L$&
$10_{125}$&$1_u^*$&
$10_{161}$&$2_\sigma^*$\\

$8_{16}$&$2_L$&
$9_{31}$&$2_L$&
$10_{18}$&$1_u$&
$10_{54}$&$1_u^*$&
$10_{90}$&$1_u^*$&
$10_{126}$&$1_u^*$&
$10_{162}$&$1_u^*$\\

$8_{17}$&$1_u$&
$9_{32}$&$1_u^*$&
$10_{19}$&$2_L$&
$10_{55}$&$2_\sigma$&
$10_{91}$&$1_u$&
$10_{127}$&$2_\sigma$&
$10_{163}$&$2_L$\\

$8_{18}$&$2_w$&
$9_{33}$&$1_u$&
$10_{20}$&$2_L$&
$10_{56}$&$2_\sigma$&
$10_{92}$&$2_\sigma$&
$10_{128}$&$3_\sigma$&
$10_{164}$&$1_u$\\\hline

$8_{19}$&$3_{\sigma}$&
$9_{34}$&$1_u$&
$10_{21}$&$2_{\sigma}$&
$10_{57}$&$1_u^*$&
$10_{93}$&$1_u^*$&
$10_{129}$&$1_u$&
$10_{165}$&$2_L$\\

$8_{20}$&$1_u$&
$9_{35}$&$2_w^*$&
$10_{22}$&$1_u^*$&
$10_{58}$&$1_u^*$&
$10_{94}$&$1_u^*$&
$10_{130}$&$1_u^*$&&\\

$8_{21}$&$1_u$&
$9_{36}$&$2_{\sigma}$&
$10_{23}$&$1_u$&
$10_{59}$&$1_u$&
$10_{95}$&$1_u$&
$10_{131}$&$1_u$&&\\

$9_1$&$4_{\sigma}$&
$9_{37}$&$2_w$&
$10_{24}$&$2_L$&
$10_{60}$&$1_u$&
$10_{96}$&$1_u^*$&
$10_{132}$&$1_u$&&\\
\hline\hline
\end{tabular}

\end{table}

\subsection{Knots with 12 crossings}\label{section:12}
Among the 12 crossing knots we found the following examples:
\bn
\item there exists precisely one knot, namely $12n_{749}$ with $|\s(K)|\leq 2$ and such that $|\s_z(K)|\geq 4$ for some $z\in \ZZ$,
\item there exists precisely one knot, namely $12a_{896}$ with $|\s_z(K)|\leq 2$ for all $z\in \ZZ$ and such that there exist $z_1,z_2\in \ZZ$ with
$|\s_{z_1}(K)-\s_{z_2}(K)|\geq 4$.
\en
Our calculations show that the aforementioned seven lower bounds determine the algebraic unknotting number for all knots with up to 12 crossings, except possibly for the following:
\[\begin{array}{cccccccc}
12a_{0050}&
12a_{0141}&
12a_{0364}&
12a_{0649}&
12a_{0728}&
12a_{0791}&
12a_{0901}\\
12a_{1049}&
12a_{1054}&
12a_{1064}&
12a_{1138}&
12a_{1141}&
12a_{1234}&
12a_{1236}\\
12a_{1264}&
12n_{0200}&
12n_{0260}&
12n_{0657}&
12n_{0864}.
\end{array} \]
The algebraic unknotting number of all the above  knots is either $1$ or $2$.

The knots $12a_{0050}$, $12a_{0141}$, $12a_{0364}$, $12a_{0649}$, $12a_{0728}$, $12a_{0791}$, $12a_{0901}$, $12a_{1054}$, $12a_{1064}$, $12a_{1138}$, $12a_{1234}$,
$12a_{1236}$, $12n_{0200}$ and $12n_{0864}$ all have Nakanishi index $1$. In fact, we were able to find an explicit generator of the Alexander module.
This allows us to compute the Blanchfield pairing for all those knots. Since $m(K)=1$ it is necessarily of the form
\[\xymatrix{
\L/p\L\times \L/p\L\ar[rrr]^{(v,w)\mapsto\ol{v}\cdot q\cdot w/p}&&& \O/\L,}
\]
where $p$ is the Alexander polynomial and $q\in\L$. For example, for $K=12a_{0050}$ we have
\[ \ba{rcl}
p&=&\Delta_K(t)=t^{-4}-8t^{-3}+20t^{-2}-30t^{-1}+33t-30t+20t^2-8t^3+t^4,\\
q&=&-t^3+7t^2-13t+17-13t^{-1}+7t^{-2}-t^{-3} \ea \]
(we refer to  \cite{BF12a} for the other knots).
Thus  $12a_{0050}$ has algebraic unknotting number $1$ if and only if there exists  an automorphism of $\L/p$ (as a $\L$-module),
which transforms this pairing into $(v,w)\mapsto \pm\ol{v}w/p$. This is equivalent to the existence of an  $f\in\L$ such that $qf\ol{f}=\pm 1\pmod p$. The problem of finding such $f$ or showing that it does not exist, in general, is very hard.

The knots $12a_{1054}$, $12a_{1141}$, $12a_{1264}$, $12n_{657}$ and $12n_{0260}$ have Nakanishi index 1 or 2. We were able to find a $2\times 2$ presentation
matrix in each case. For example, for $12a_{1054}$ we have
\[\begin{pmatrix} 2t^3& -1+4t-7t^2+4t^3-t^4\\ 1-5t+t^2-5t^3+t^4& 3t\end{pmatrix}.\]
However, we could not show that the Alexander module is cyclic. If it is not, then the algebraic unknotting number is $2$.

\section{Open questions}

Apart from Conjecture~\ref{conj:nua}, we can state a few more questions and problems related to $n(K)$.


\begin{question}
Given any knot $K$, do we have the following equality
\[ n(K)=\ba{c}\mbox{minimal size of a hermitian}\\
\mbox{matrix $A$ over $\zt$ with $\l(A)\cong \l(K)$}\ea\,\,\,?\]
Put differently, is the condition in the definition of $n(K)$ that $A(1)$ be diagonal over $\Z$ necessary?
Note that an affirmative answer would imply that $n(K)\le\deg\Delta_K(t)$ for any knot $K$.
\end{question}

\begin{question}
Let $K$ be a knot with $m(K)=1, |\s(K)|\leq 2$ and which satisfies the Lickorish obstruction.
Does it follow that  the $n(K)=1$ obstructions of Lemma \ref{lem:finitelinking} are necessarily satisfied? The discussion
following Lemma~\ref{lem:finitelinking} is evidence that the answer is yes.
\end{question}

\begin{question}
 Is $n(K)$ invariant under mutation? It is an open question whether the unknotting number is preserved under
mutation $($see \cite[Problem~1.69(c)]{Kir97}$)$.
The $S$--equivalence class of a Seifert matrix (and thus the isometry type of the Blanchfield pairing)
is  preserved under positive mutation $($see \cite[Theorem~2.1]{KL01}$)$. On the other hand the $S$--equivalence class
(in fact the isomorphism class of the Alexander module)
 is in general not preserved under negative mutation $($see \cite{Ke89} and \cite[Section~3]{Ke04}$)$.
We do not know whether $n(K)$ is preserved under mutation.
Note though that the Levine--Tristram signatures are preserved under any mutation $($see \cite{CL99}$)$
and note that homeomorphism type of the 2--fold branched cover is preserved under mutation (see e.g. \cite[Proposition~3.8.2]{Ka96}).
\end{question}

\begin{question}
In Section \ref{section:livingston} we introduce a new classical invariant $\rho_{\zt}(K)$
and we show in Proposition \ref{prop:rhozt} that it gives a lower bound on the topological 4--genus.
Can this invariant be used to give new computable lower bounds on the topological 4--genus?
\end{question}

\begin{question}
What are the algebraic unknotting numbers of the remaining 12 crossing knots? (See Section \ref{section:12} for details.)
It might be possible to possible to use the methods of \cite{KW03} to show that the Nakanishi index of the five 12 crossing knots mentioned
in Section \ref{section:12} is 2.
\end{question}


\begin{thebibliography}{HKL00}
\bibitem[BG06]{BG06}
J. Baldwin and W. Gillam, {\em Computations of Heegaard-Floer knot homology}, Preprint (2006), to be published by J. Knot Thy. Ram.
\bibitem[Bl57]{Bl57}
R. C. Blanchfield, {\em  Intersection theory of manifolds with operators with applications to
knot theory}, Ann. of Math., 65: 340-356 (1957)

\bibitem[BW84]{BW84}
M. Boileau and C. Weber, {\em Le probl\`eme de J. Milnor sur le nombre gordien des n\oe uds alg\'ebriques},
 Enseign. Math. (2)  30  (1984),  no. 3-4, 173--222.

\bibitem[BF12a]{BF12a}
M. Borodzik and S. Friedl, {\em Knotorious World Wide Web page},\\
\texttt{http://www.mimuw.edu.pl/\,\~\,mcboro/knotorious.php}, December 2011.

\bibitem[BF12b]{BF12b}
M. Borodzik and S. Friedl, {\em The unknotting number and classical invariants II}, in preparation (2012)

\bibitem[BLM86]{BLM86}
R. D. Brandt, W. B. R. Lickorish and K. Millett, {\em A polynomial invariant for unoriented
knots and links}, Inv. Math. 84 (1986), 563--573.

\bibitem[Ca78]{Ca78} J.~W.~S.~Cassels,
{\em Rational quadratic forms}, London Mathematical Society Monographs, 13. Academic Press, Inc., London-New York, 1978.

\bibitem[CL11]{CL11}
J. C. Cha and C. Livingston, {\em KnotInfo: Table of Knot Invariants},\\
\texttt{http://www.indiana.edu/\,\~\,knotinfo}, February 2011.

\bibitem[CoL86]{CoL86}
T. Cochran and R. Lickorish, {\em Unknotting information from 4-manifolds}, Trans. Amer. Math. Soc.
297 (1986), no. 1, 125-142.

\bibitem[CS99]{CS99}
J. Conway and N. Sloane, {\em Sphere packings, lattices and groups}, 3rd ed.
Grundlehren der mathematischen Wissenschaften \textbf{290}, Springer--Verlag, New York, 1999.

\bibitem[CL99]{CL99}
D. Cooper and W. B. R. Lickorish, {\em Mutations of links in genus 2 handlebodies},
Proc. Amer. Math. Soc. 127 (1999), 309--314


\bibitem[Fo93]{Fo93}
M. Fogel, {\em The Algebraic Unknotting Number}, PhD thesis, University of California, Berkeley (1993)

\bibitem[Fo94]{Fo94}
M. Fogel, {\em Knots with Algebraic Unknotting Number One}, Pacific J. Math. 163, 277--295 (1994)

\bibitem[FQ90]{FQ90}
M. Freedman and F. Quinn, {\em Topology of 4--manifolds}, Princeton Mathematical
Series 39, Princeton University Press, Princeton, NJ (1990)

\bibitem[FHMT07]{FHMT07}
S. Friedl, I. Hambleton, P. Melvin and P. Teichner, {\em Non-smoothable Four-manifolds with Infinite
Cyclic Fundamental Group},
Int. Math. Res. Notices, Vol. 2007, Article ID rnm031, 20 pages.



\bibitem[Go78]{Go78}
C. Gordon, {\em Some aspects of classical knot theory}, Knot theory (Proc. Sem., Plans-sur-Bex, 1977), 1-60, Lecture Notes in Math., 685, Springer, Berlin, 1978.

\bibitem[GL06]{GL06} C. Gordon and J. Luecke,
{\em Knots with unknotting number 1 and essential Conway spheres}, Algebr.
Geom. Topol. 6 (2006), 2051-2116.

\bibitem[Gr09]{Gr09}
J. Greene, {\em On closed 3-braids with unknotting number one}, Preprint (2009)

\bibitem[Ho85]{Ho85}
C. F. Ho, {\em A polynomial invariant for knots and links -- preliminary report}, Abstracts
Amer. Math. Soc. 6 (1985), 300.

\bibitem[HM73]{HM73} D. Husemoller and J. Milnor, {\em Symmetric bilinear forms}, Ergebnisse der Mathematik und ihrer Grenzgebiete, Band 73. Springer-Verlag, New York-Heidelberg, 1973.

\bibitem[Ja09]{Ja09}
S. Jabuka, {\em The rational Witt class and the unknotting number of a knot}, Preprint (2009)

\bibitem[KM86]{KM86}
T. Kanenobu and H. Murakami, {\em  Two-bridge knots with unknotting number one}, Proc. Amer. Math. Soc. 98 (1986), 499--502.

\bibitem[Ka86]{Ka86}
A. Kawauchi, {\em
 Three dualities on the integral homology of infinite cyclic coverings of manifolds},  Osaka J. Math.  23  (1986),  no. 3, 633--651

\bibitem[Ka87]{Ka87}
A. Kawauchi, {\em On the integral homology of infinite cyclic coverings of links}, Kobe
J. Math., 4: 31-41 (1987)
\bibitem[Ka96]{Ka96}
A. Kawauchi, {\em Survey on knot theory}, Birkh\"auser (1996)
\bibitem[Ke75]{Ke75}
C.  Kearton, {\em
Blanchfield duality and simple knots},
Trans. Am. Math. Soc. 202, 141-160 (1975)
\bibitem[KW03]{KW03}
C. Kearton and S. M. J.  Wilson, {\em
Knot modules and the Nakanishi index},
Proc. Amer. Math. Soc. 131 (2003), no. 2, 655-663

\bibitem[Ke89]{Ke89}
C. Kearton, {\em
Mutation of knots},
Proc. Amer. Math. Soc. 105 (1989), no. 1, 206–208.

\bibitem[Ke04]{Ke04}
C. Kearton, {\em  $S$-equivalence of knots}, J. Knot Theory Ramifications 13 (2004), no. 6, 709--717.

\bibitem[Kin57]{Kin57}
S. Kinoshita, {\em On Wendt's Theorem of Knots}, Osaka J. Math, Vol. 9, 61-66 (1957)
\bibitem[Kin58]{Kin58}
S. Kinoshita, {\em On Wendt's Theorem of Knots. II},  Osaka Math. J.  10, 259--261 (1958)



\bibitem[Kir97]{Kir97}
R. Kirby, {\em Problems in low dimensional topology},
Geometric topology: 1993 Georgia international topology conference, AMS/IP Studies in Advanced Mathematics
(1997)


\bibitem[KL01]{KL01}
P. Kirk and C. Livingston, {\em Concordance and mutation}, Geom. Topol. 5 (2001), 831--883

\bibitem[Ko89]{Ko89}
K. H. Ko, {\em  A Seifert matrix interpretation of Cappell and Shaneson's approach to
link cobordisms}, Math. Proc Cambridge Philos. Soc, 106: 531-545 (1989)


\bibitem[Kob89]{Kob89}
T. Kobayashi, {\em Minimal genus Seifert surfaces for unknotting number 1 knots},
Kobe J. Math. 6 (1989), no. 1, 53--62.
\bibitem[KMr93]{KMr93}
P. Kronheimer and T. Mrowka, {\em Gauge theory for embedded surfaces I}, Topology 32 (1993), no. 4, 773--826.

\bibitem[Lei06]{Lei06}
C. Leidy, {\em Higher order linking forms for knots}, Commentarii Mathematici Helvetici, 81 (2006), 755-781
\bibitem[Lev69]{Lev69}
 J. Levine, {\em Knot cobordism groups in codimension two},
Commentarii Mathematici Helvetici, 44 (1969) 229-244
\bibitem[Lev70]{Lev70}
 J. Levine, {\em An algebraic classification of some knots of codimension two}, Comment. Math. Helv. 45 (1970) 185Â--198.
\bibitem[Lev77]{Lev77}
J. Levine, {\em
Knot modules. I.},
Trans. Am. Math. Soc. 229, 1-50 (1977)


\bibitem[Lic85]{Lic85}
W. B. R. Lickorish, {\em The unknotting number of a classical knot}, Contemp. Math. 44 (1985)
117--121.
\bibitem[Lic97]{Lic97}
W.B.R. Lickorish, {\em An introduction to knot theory}, Graduate Texts in Mathematics, 175. Springer-Verlag, New York, 1997

\bibitem[Lic11]{Lic11}
W. B. R. Lickorish, {\em Private communication} (2011)

\bibitem[Lit79]{Lit79} R.~A.~Litherland, \textit{Signatures of iterated torus knots} in
Topology of low-dimensional manifolds (Proc. Second Sussex Conf., Chelwood Gate, 1977),  pp. 71--84,
Lecture Notes in Math., \textbf{722}, Springer, Berlin, 1979.

\bibitem[Liv11]{Liv11}
C. Livingston, {\em Knot 4-genus and the rank of classes in $W(\Q(t))$}, Pac. J. of Math. (2011), 113--126

\bibitem[MQ06]{MQ06}
J. Ma and R.  Qiu, {\em A lower bound on unknotting number},  Chinese Ann. Math. Ser. B  27  (2006),  no. 4, 437--440.

\bibitem[Mi98]{Mi98}
Y. Miyazawa, {\em The Jones polynomial of an unknotting number one knot}, Top. App. 83 (1998), 161-167.

\bibitem[Mo73]{Mo73}
J.M. Montesinos, {\em Variedades de Seifert que son recubridores ciclicos ramificados de dos hojas}, Bol. Soc. Mat.
Mexicana 18 (1973) 1--32.


\bibitem[Muk90]{Muk90}
H. Murakami, {\em Algebraic Unknotting Operation}, Q\&A. Gen. Topology 8, 283-292 (1990)

\bibitem[MY00]{MY00}
H. Murakami and A. Yasuhara, {\em 4-genus and 4-dimensional clasp number of a knot},
Proc. Amer. Math. Soc. 128(12) (2000), 3693-3699

\bibitem[Mus65]{Mus65}
K. Murasugi, {\em On a certain numerical invariant of link types}, Trans. Amer. Math. Soc. 117 (1965), 387-422.
\bibitem[Na81]{Na81}
Y. Nakanishi, {\em A note on unknotting number}, Math. Sem. Notes, Kobe Univ., 9: 99-108.  (1981)



\bibitem[Ow08]{Ow08}
B. Owens, {\em Unknotting information from Heegaard Floer homology},
Adv. Math. 217 (2008), no. 5, 2353--2376.

\bibitem[OwS05]{OwS05}
B. Owens and S. Strle, {\em Definite manifolds bounded by rational homology three spheres}, in `Geometry and Topology of Manifolds', Fields Institute Communications, Vol. 47, AMS (2005), pages 243--252.

\bibitem[OS03a]{OS03a}
P. Ozsv\'ath and Z. Szab\'o,
{\em Absolutely graded Floer homologies and intersection forms for four-
manifolds with boundary}, Advances in Mathematics 173 2003, 179--261.


\bibitem[OS03b]{OS03b}
P. Ozsv\'ath and Z. Szab\'o,
{\em Knot Floer homology and the four-ball genus}, Geometry and Topology 7 (2003),
625--639.

\bibitem[OS05]{OS05}
P. Ozsv\'ath and Z. Szab\'o,
{\em
Knots with unknotting number one and Heegaard Floer homology},
Topology 44 (2005), no. 4, 705--745.


\bibitem[Ran81]{Ran81}
A. Ranicki, {\em Exact sequences in the algebraic theory of surgery}, Mathematical
Notes 26, Princeton (1981)
\bibitem[Ran03]{Ran03}
A. Ranicki, {\em Blanchfield and Seifert algebra in high-dimensional knot theory}, Moscow Journal of Mathematics
 3, No. 4, 1333-1367 (2003)
\bibitem[Ras03]{Ras03}
J. Rasmussen, {\em Floer homology and knot complements}, Ph.D. thesis, Harvard University, 2003.
\bibitem[Ras10]{Ras10}
J. Rasmussen, {\em Khovanov homology and the slice genus}, Inventiones Mathematicae 182 (2010), no. 2, 419--447.
\bibitem[Sae99]{Sae99}
O. Saeki, {\em On Algebraic Unknotting Numbers of Knots}, Tokyo J. Math. 22, 425-443 (1999)
\bibitem[Sar10]{Sar10}
S. Sarkar, {\em Grid diagrams and the Ozsv\'ath-Szab\'o tau-invariant}, Preprint (2010)
\bibitem[Sch85]{Sch85}
M. Scharlemann, {\em Unknotting number one knots are prime}, Invent. Math. 82 (1985), 37--55.
 \bibitem[ST89]{ST89}
 M.  Scharlemann and A. Thompson, {\em Link genus and the Conway moves},  Comment. Math. Helv.  64  (1989),  no. 4, 527--535.
\bibitem[St04]{St04}
A.  Stoimenow, {\em Polynomial values, the linking form and unknotting numbers},
Math. Res. Lett. 11 (2004), no. 5-6, 755--769.

\bibitem[Ta79]{Ta79}
L.  Taylor, {\em On the genera of knots}, Topology of low-dimensional manifolds (Proc. Second
Sussex Conf., Chelwood Gate, 1977), 144-154, Lecture Notes in Math., 722, Springer,
Berlin, 1979.

\bibitem[Ta84]{Ta84}
L. Taylor, {\em Relative Rochlin invariants}, Topology Appl. 18 1984, 259--280.

\bibitem[Tra99]{Tra99}
P. Traczyk, {\em A criterion for signed unknotting number}, Contemporary Mathematics 233
(1999), 215-220.

\bibitem[Tri69]{Tri69}
A. Tristram, {\em Some cobordism invariants for links}, Proc. Camb. Phil. Soc. 66 (1969), 251-264
\bibitem[Tro73]{Tro73}
H. F. Trotter, {\em  On S-equivalence
of Seifert matrices}, Invent. Math., 20: 173-207 (1973)


\bibitem[Wa56]{Wa56} B.~L.~van der Waerden,
{\em Die Reduktionstheorie der positiven quadratischen Formen},
Acta Math. \textbf{96} (1956), 265--309.

\bibitem[We37]{We37}
H. Wendt, {\em Die gordische Aufl\"osung von Knoten}, Math. Z., 42: 680-696 (1937)

\end{thebibliography}
\end{document}